\newcommand{\End}{\operatorname{End}}
\newcommand{\Id}{\operatorname{id}}
\newcommand{\Frob}{\operatorname{Frob}}
\newcommand{\can}{\operatorname{can}}
\newcommand{\Tr}{\operatorname{Tr}}
\newcommand{\et}{\operatorname{\acute{e}t}}
\newcommand{\Spec}{\operatorname{Spec}}
\newcommand{\Frac}{\operatorname{Frac}}
\newcommand{\red}{\operatorname{red}}
\newcommand{\perf}{\operatorname{perf}}
\newcommand{\af}{\operatorname{af}}
\newcommand{\Gal}{\operatorname{Gal}}
\newcommand{\GL}{\operatorname{GL}}
\newcommand{\Fl}{\mathcal{F}\!\ell}
\newcommand{\dep}{\operatorname{dep}}
\newcommand{\Hom}{\operatorname{Hom}}
\newcommand{\colim}{\operatorname{colim}}
\renewcommand{\tilde}{\widetilde}
\newcommand{\SC}{\operatorname{sc}}
\numberwithin{equation}{section}
\newtheorem{theorem}[equation]{Theorem}
\newtheorem{prop}[equation]{Proposition}
\newtheorem{lem}[equation]{Lemma}
\theoremstyle{definition}
\newtheorem{remark}[equation]{Remark}
\newtheorem{example}[equation]{Example}
\begin{document}

\title[Mod $p$ geometric Satake]{Geometrization of the Satake transform \\ for mod $p$ Hecke algebras}
\date{}

\author{Robert Cass}
\address{University of Michigan, 530 Church St, Ann Arbor, MI, USA}
\email{cassr@umich.edu}

\author{Yujie Xu}
\address{M.I.T., 77 Massachusetts Avenue,
Cambridge, MA, USA}
\email{yujiexu@mit.edu}

\maketitle

\begin{abstract}
    We geometrize the mod $p$ Satake isomorphism of Herzig and Henniart--Vign\'eras using Witt vector affine flag varieties for reductive groups in mixed characteristic.~We deduce this as a special case of a formula, stated in terms of the geometry of generalized Mirkovi\'c--Vilonen cycles, for the Satake transform of an arbitrary pararhoric mod $p$ Hecke algebra with respect to an arbitrary Levi subgroup.
    Moreover, we prove an explicit formula for the convolution product in an arbitrary parahoric mod $p$ Hecke algebra.
    Our methods involve the constant term functors inspired from the geometric Langlands program, and we also treat the case of reductive groups in equal characteristic. We expect this to be a first step towards a geometrization of a mod $p$ Local Langlands Correspondence.
\end{abstract}

\tableofcontents

\section{Introduction}
\addtocontents{toc}{\protect\setcounter{tocdepth}{0}}
Let $G$ be a reductive algebraic group defined over a nonarchimedean local field $F$ with finite residue field of characteristic $p$. Fix a coefficient field $k$. For every compact open subgroup $K \subset G(F)$, let $\mathcal{H}_K$ be the Hecke algebra of compactly supported functions $K \backslash G(F) / K \rightarrow k$. The Hecke algebras $\mathcal{H}_K$ arise naturally in the study of smooth representations of $G(F)$ on $k$-vector spaces (see \S \ref{sect-context}). 

In the present paper, we focus on the case where $k$ has characteristic $p$ equal to that of the residue field of $F$. Motivated by recent progress in the mod $p$ Langlands program, as well as recent geometrization programs of Langlands Correspondences, we apply the geometry of Witt vector affine flag varieties to obtain results on mod $p$ Hecke algebras. Our methods are inspired by the geometric Langlands program, especially the geometric Satake equivalence \cite{Lusztig:Singularities, Ginzburg:Satake, MirkovicVilonen:Geometric} and its analogue in mixed characteristic \cite{Zhu:Mixed}.
When $K$ is a special parahoric subgroup, we recover the mod $p$ Satake isomorphism of Herzig \cite{Herzig:Satake} and Henniart--Vign\'eras \cite{HenniartVigneras:Satake}. The Satake isomorphism in these works is the mod $p$ counterpart of the Satake isomorphism of Haines--Rostami \cite{HainesRostami:Satake} in characteristic zero. Our methods also yield formulas for the inverse of the mod $p$ Satake isomorphism in \cite{Herzig:Classification, Ollivier:Inverse, InverseSatake}.

\subsection{Main results}
Let $A$ be a maximal $F$-split torus in $G$ and let $\mathbf{f}$ be a facet in the apartment $\mathscr{A}(G,A,F)$ in the enlarged building. Let $\mathcal{O}_F$ be the ring of integers of $F$ and let $\mathcal{G}$ be the parahoric $\mathcal{O}_F$-group scheme such that $K:=\mathcal{G}(\mathcal{O}_F)$ is the connected fixer of $\mathbf{f}$. Then $K$ is a compact open subgroup of $G(F)$, and we can form the mod $p$ Hecke algebra $\mathcal{H}_K$ of compactly supported functions $K \backslash G(F) / K \rightarrow \mathbb{F}_p$. The algebra structure is given by the convolution product $*$ (see \eqref{eqn-convolution}).

\subsubsection{Convolution}
Let $\mathbb{F}_q$ be the residue field of $F$. If $F$ has characteristic zero, the Witt vector affine flag variety $\Fl_{\mathcal{G}}$, first constructed in \cite{Zhu:Mixed}, is an increasing union of perfections of projective $\mathbb{F}_q$-schemes \cite{BhattScholze:Projectivity} such that $\Fl_{\mathcal{G}}(\mathbb{F}_q) = G(F)/K$. The closures of the left $K$-orbits in $\Fl_{\mathcal{G}}$, called perfect Schubert schemes, are enumerated by certain double cosets $W_{\mathbf{f}} \backslash W^{\sigma} / W_{\mathbf{f}}$ (see \S \ref{sect-shubert-schemes}). Such Schubert schemes appear as perfections of irreducible components of the mod $p$ fibers of local models of Shimura varieties. If $F$ has characteristic $p$, for uniformity of exposition we let $\Fl_{\mathcal{G}}$ be the perfection of the usual (power series) affine flag variety associated to $G$.

For $w \in W_{\mathbf{f}} \backslash W^{\sigma} / W_{\mathbf{f}}$, let $(\underline{\mathbb{F}}_p)_w$ be the constant sheaf on the associated perfect Schubert scheme $\Fl_w$. The function-sheaf dictionary associates $\{(\underline{\mathbb{F}}_p)_w\}_w$ to a certain basis $\{\phi_w\}_w$ of $\mathcal{H}_K$. Specifically, $\phi_w$ is the sum of the characteristic functions of the double cosets in $K \backslash G(F) / K $ associated to elements in $W_{\mathbf{f}} \backslash W^{\sigma} / W_{\mathbf{f}}$ bounded by $w$ in the Bruhat order. 

To state our first theorem, we need the convolution map $m \colon \Fl_{\mathcal{G}} \times^{L^+\mathcal{G}} \Fl_{\mathcal{G}} \rightarrow \Fl_{\mathcal{G}}$ (see \eqref{eqn-convolution-diagram}) arising from the multiplication map on $G(F)$. Here $L^+\mathcal{G}$ is the positive loop group (see \eqref{eqn-loop-groups}), which is a perfect ind-scheme such that $L^+\mathcal{G}(\mathbb{F}_q) = K$. The sheaf functor $Rm_!$ corresponds to the convolution product $*$ for the Hecke algebra $\mathcal{H}_K$ under the function-sheaf dictionary (Lemma \ref{lem-conv-points}, Lemma \ref{lem-function-sheaf}).

\begin{theorem}[Theorem \ref{thm-conv-formula}] \label{mainthm-conv-formula}
Let $w_1, w_2 \in W_{\mathbf{f}} \backslash W^\sigma / W_{\mathbf{f}}$, and let $w \in  W_{\mathbf{f}} \backslash W^{\sigma} / W_{\mathbf{f}}$ be such that $m(\Fl_{w_1} \times^{L^+\mathcal{G}} \Fl_{w_2}) = \Fl_w$.
Then
$$\phi_{w_1} * \phi_{w_2} = \phi_w.$$
\end{theorem}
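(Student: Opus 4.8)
The plan is to compute the convolution $\phi_{w_1}*\phi_{w_2}$ directly via the geometry, using the identification of $Rm_!$ with the convolution product under the function--sheaf dictionary. Since $\phi_{w_i}$ corresponds to the constant sheaf $(\underline{\mathbb{F}}_p)_{w_i}$ on $\Fl_{w_i}$, the product $\phi_{w_1}*\phi_{w_2}$ is the function attached to $Rm_!\big((\underline{\mathbb{F}}_p)_{w_1} \widetilde{\boxtimes} (\underline{\mathbb{F}}_p)_{w_2}\big)$, where the twisted external product is the constant sheaf on $\Fl_{w_1}\times^{L^+\mathcal{G}}\Fl_{w_2}$. The key point is that this twisted product is a \emph{twisted flag variety fibration} over $\Fl_{w_1}$ with fibers isomorphic to $\Fl_{w_2}$; in particular it is a perfection of a proper $\mathbb{F}_q$-variety, and being a fibration of constant-sheaf type over perfectly nice base, the relevant cohomology behaves well. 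The map $m$ restricted to $\Fl_{w_1}\times^{L^+\mathcal{G}}\Fl_{w_2}$ has image exactly $\Fl_w$ by hypothesis.

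The heart of the argument is then a trace computation: for a point $x\in \Fl_w(\mathbb{F}_q)$, the value of the function attached to $Rm_!(\underline{\mathbb{F}}_p)$ at $x$ is, by the Grothendieck--Lefschetz trace formula (or rather its trivial mod $p$ incarnation), the alternating sum of traces of Frobenius on the stalk cohomology $(R^im_!\underline{\mathbb{F}}_p)_x = H^i_c(m^{-1}(x),\underline{\mathbb{F}}_p)$. Because we are working with $\mathbb{F}_p$-coefficients, there are no Tate twists and no roots of unity to track, so this value equals the image in $\mathbb{F}_p$ of the compactly supported Euler characteristic $\chi_c(m^{-1}(x),\mathbb{F}_p) = \sum_i (-1)^i \dim_{\mathbb{F}_p} H^i_c(m^{-1}(x),\mathbb{F}_p)$. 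Thus I need to show that this Euler characteristic is $\equiv 1 \pmod p$ for every $\mathbb{F}_q$-point $x$ of $\Fl_w$, and $\equiv 0$ off $\Fl_w$ (the latter being automatic since $m$ factors through $\Fl_w$). Equivalently — and this is the cleanest formulation — I want $\#\,m^{-1}(x)(\mathbb{F}_q) \equiv 1 \pmod p$, since for a perfection of a finite-type $\mathbb{F}_q$-scheme the mod $p$ point count agrees with the mod $p$ Euler characteristic (both sides are insensitive to perfection, and one reduces to the finite-type case where this is standard).

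So the problem reduces to a combinatorial/geometric fact: the fiber $m^{-1}(x)$, which sits inside the twisted product $\Fl_{w_1}\times^{L^+\mathcal{G}}\Fl_{w_2}$, has an $\mathbb{F}_q$-point count that is $1$ modulo $p$. Here I would use that $m^{-1}(x)$ is itself (a perfection of) a projective $\mathbb{F}_q$-variety which is stratified by the preimages of the individual Schubert cells, and each such stratum is an iterated fibration in affine spaces $\mathbb{A}^n$ over a point — this is the standard cell structure on fibers of convolution morphisms between (perfect) Schubert varieties, coming from the Iwasawa/Bruhat decomposition of $G(F)$ and the identification of Schubert cells with affine spaces. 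An affine space $\mathbb{A}^n$ over $\mathbb{F}_q$ has $q^n \equiv 1 \pmod p$ points (indeed $q^n$ is a power of $p$ when $n\ge 1$, but one must be careful: $q^n \equiv 1 \pmod p$ fails — rather $q^n \equiv 0 \pmod p$ for $n \geq 1$!). This sign issue is in fact \emph{the main obstacle}, and the correct statement is that among all the cells in $m^{-1}(x)$ exactly one is a point (the "top" cell, corresponding to $w$ appearing with multiplicity governed by the hypothesis $m(\Fl_{w_1}\times^{L^+\mathcal{G}}\Fl_{w_2})=\Fl_w$), while every other cell is a positive-dimensional affine space fibration and hence contributes $0$ mod $p$; summing gives $1$. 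Making this last claim precise — identifying the unique point-cell and showing all other strata have positive-dimensional affine space factors — is where the real work lies, and I would do it using the explicit combinatorics of the Bruhat order on $W_{\mathbf f}\backslash W^\sigma/W_{\mathbf f}$ together with dimension-count arguments for the convolution map, leveraging that $\Fl_{w_1}\times^{L^+\mathcal{G}}\Fl_{w_2}$ is irreducible of the expected dimension and $m$ is birational onto $\Fl_w$ precisely under the stated hypothesis.
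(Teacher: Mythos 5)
Your reduction is correct and matches the paper's: $\phi_{w_1}*\phi_{w_2}$ corresponds to $Rm_!(\underline{\mathbb{F}}_p)$ via the function--sheaf dictionary, and the theorem follows once one shows $\#\,m^{-1}(x)(\mathbb{F}_q)\equiv 1\pmod p$ for every $x\in\Fl_w(\mathbb{F}_q)$, equivalently $Rm_!(\mathbb{F}_p)\cong(\underline{\mathbb{F}}_p)_w$. The gap is in how you propose to establish this. You claim that $m^{-1}(x)$ is stratified by (preimages of) Schubert cells into affine spaces, with exactly one zero-dimensional stratum. This is not true for the natural stratification. Already for $G=\SL_2$ with $w_1=w_2=s$, the fiber of $m\colon \Fl_{s}\tilde\times\Fl_{s}\to\Fl_{s}$ over a generic $\mathbb{F}_q$-point $x$ is $\{y\in\Fl_s : y^{-1}x\in\Fl_s\}\cong\mathbb{P}^1$, and the stratification by pairs $(\Fl^\circ_{v_1},\Fl^\circ_{v_2})$ cuts it into two points (the strata $(s,e)$ and $(e,s)$, with $y=x$ and $y=e$ respectively) and one copy of $\mathbb{G}_m$ (the stratum $(s,s)$), not one point and an $\mathbb{A}^1$. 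The mod $p$ count $1+1+(q-1)\equiv 1$ still works out, but that is an accident of this example and not a consequence of the structure you posit; in general the strata of a convolution fiber coming from Bruhat cells are affine bundles over lower-dimensional convolution fibers, not affine spaces, and isolating a usable BB-type cell decomposition of $m^{-1}(x)$ with a unique zero-cell is a genuine theorem requiring its own proof (one would need to produce a linear $\mathbb{G}_m$-action on a nice deperfection of each fiber, which is not obviously available).

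The paper takes a different, cleaner route that bypasses cell decompositions entirely. It proves directly that $Rm_!(\mathbb{F}_p)\cong\mathbb{F}_p[0]$ on $\Fl_w$ (Theorem \ref{thm-convolution}), by factoring $m$ through the Demazure map $\pi_{\dot w}\colon D_{\dot w}\to\Fl_w$ attached to the (possibly non-reduced) word $\dot w = \dot w_1\cdot\dot w_2$. The key inputs (Lemma \ref{lem-Demazure.rational}) are coherent-cohomological: $D_{\dot w}$ is a perfection of an iterated $\mathbb{P}^1$-bundle, $R\pi_{\dot w,*}\mathcal{O}_{D_{\dot w}}\cong\mathcal{O}_{\Fl_w}[0]$ by reduction to fibers that are unions of finite Schubert varieties in $\overline P^{\red}/\overline B$ with trivial higher $\mathcal{O}$-cohomology, and then the Artin--Schreier sequence converts this into $R\pi_{\dot w,!}\mathbb{F}_p\cong\mathbb{F}_p[0]$. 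The mod $p$ point count of the fiber then drops out of the function--sheaf dictionary applied to the constant sheaf, with no need to exhibit any stratification of $m^{-1}(x)$. If you want to salvage your approach, you would essentially be re-proving (a point-counting version of) the rationality of Demazure resolutions; the combinatorial route, closer to Herzig and Henniart--Vign\'eras, is viable but is substantially more work than the sentence you devote to it.
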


In particular, the convolution of two elements in $\{\phi_w\}_w$ is again an element in $\{\phi_w\}_w$. For a given pair $(w_1, w_2)$, the element $w$ may be computed explicitly by combinatorial computations in the affine Weyl group. The key geometric input in the proof of Theorem \ref{mainthm-conv-formula} is the Demazure scheme $\pi_{\dot{w}} \colon D_{\dot{w}} \rightarrow \Fl_w$  \eqref{eqn-Demazure-def} for possibly non-reduced $\dot{w}$. This allows one to show that the fibers of $m$ are built out of Schubert varieties, which in particular have the property that their number of $\mathbb{F}_q$-points is congruent to $1 \pmod{p}$ (see the proof of Theorem \ref{thm-convolution}).

\subsubsection{Satake transform}
Now let $M$ be a Levi subgroup of $G$ given by the centralizer of a subtorus of $A$. To the facet $\mathbf{f}$, we may naturally associate a facet in the apartment $\mathscr{A}(M,A,F)$ whose parahoric group scheme $\mathcal{M}$ satisfies $\mathcal{M}(\mathcal{O}_F) = M(F) \cap K$ \cite[A.2]{Richarz:Affine}. Let $P$ be a parabolic subgroup of $G$ with Levi factor $M$ and unipotent radical $U$. When $K$ is special, Herzig \cite{Herzig:Satake} and Henniart--Vign\'eras \cite{HenniartVigneras:Satake} defined a Satake transform\footnote{The formula for $\mathcal{S}$ makes sense even when $K$ is non-special, although it is not a homomorphism in this generality.}
$$    \mathcal{S} \colon \mathcal{H}_{K} \rightarrow \mathcal{H}_{M(F) \cap K}, \quad \mathcal{S}(f)(m) = \sum_{u \in U(F) / U(F) \cap K} f(mu), \: \: m \in M(F).$$

Our second result is an explicit formula for $\mathcal{S}$ in the basis $\{\phi_w\}_w$ where $K$ is any parahoric subrgoup. To state it, note that there exists a cocharacter $\lambda \colon \mathbb{G}_m \rightarrow G$ such that $M= G^0$ is the group of \textit{fixed points} under the corresponding conjugation action of $\mathbb{G}_m$, and $P=G^+$ is the \textit{attractor} for this $\mathbb{G}_m$-action in the sense of \eqref{eqn-Gm-action}. For an action of the perfection $(\mathbb{G}_m)_{\perf}$ on a perfect $\mathbb{F}_q$-scheme, we use the same notation to denote the fixed points and attractors as functors on perfect $\mathbb{F}_q$-schemes.
Then there is an induced action of $(\mathbb{G}_m)_{\perf}$ on $\Fl_{\mathcal{G}}$ such that $\Fl_{\mathcal{M}} \subset (\Fl_{\mathcal{G}})^0$. Here $(\Fl_{\mathcal{G}})^0$ decomposes as a disjoint union of connected components, and $\Fl_{\mathcal{M}}$ is a union of some subset of these components. The natural map $q^+ \colon (\Fl_{\mathcal{G}})^+ \rightarrow (\Fl_{\mathcal{G}})^0$, which is informally $x \mapsto \lim_{t \rightarrow 0} \lambda(t) \cdot x \cdot \lambda(t)^{-1}$, induces a bijection on connected components.

The geometrically connected components of $\Fl_{\mathcal{M}}$ are indexed by a certain group $\pi_1(M)^\sigma_I$ (see~\S\ref{sect-shubert-schemes}).~For $c \in \pi_1(M)^\sigma_I$ and $w \in W_{\mathbf{f}} \backslash W^\sigma / W_{\mathbf{f}}$, let $(\Fl_{\mathcal{M}})^c$ be the corresponding connected component of $\Fl_{\mathcal{M}}$. Let
$\phi_{c,w} \in \mathcal{H}_{\mathcal{M}}$ be the characteristic function of the intersection $(\Fl_{\mathcal{M}})^c(\mathbb{F}_q) \cap \Fl_w(\mathbb{F}_q)$. Here $(\Fl_{\mathcal{M}})^c\cap \Fl_w$ is a (geometrically) connected component of $(\Fl_w)^+$, i.e.~an attractor, and the irreducible components of this intersection are generalized versions of the \textit{Mirkovi\'c--Vilonen cycles} in \cite[Theorem 3.2]{MirkovicVilonen:Geometric}.

\begin{theorem}[Theorem \ref{thm-satake-formula}] \label{mainthm-satake-formula}
The Satake transform $\mathcal{S} \colon \mathcal{H}_{K} \rightarrow \mathcal{H}_{M(F)\cap K}$ satisfies
$$
\mathcal{S}(\phi_w)=
\begin{cases}
\phi_{c,w}, & c \in \pi_1(M)^\sigma_I \text{ is such that } (\Fl_{\mathcal{M}})^c \cap \Fl_{w} \text{ is closed in } \Fl_{w} \\
0, & \text{ for all } c \in \pi_1(M)^\sigma_I, \: (\Fl_{\mathcal{M}})^c \cap \Fl_{w} \text{ is not closed in } \Fl_{w}.
\end{cases}$$
\end{theorem}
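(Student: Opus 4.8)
We aim to deduce the theorem from the principle that, under the function--sheaf dictionary, the transform $\mathcal{S}$ is computed by \emph{hyperbolic localization} for the $(\mathbb{G}_m)_{\perf}$-action. First I would make this precise: the natural map $p^+\colon(\Fl_{\mathcal{G}})^+\to\Fl_{\mathcal{G}}$, with $(\Fl_{\mathcal{G}})^+$ playing the role of the affine flag variety of $P$, induces restriction of functions; $Rq^+_!$ induces the operation of summing a function over the fibres of $q^+$ (Grothendieck--Lefschetz with $\mathbb{F}_p$-coefficients: the number of $\mathbb{F}_q$-points of the fibre taken $\bmod p$); and, since $(\Fl_{\mathcal{G}})^+(\mathbb{F}_q)=P(F)/(P(F)\cap K)$ and parahorics respect the Levi decomposition $P=M\ltimes U$, the composite $Rq^+_!(p^+)^*$ on functions sends $f\in\mathcal{H}_K$ to $m\mapsto\sum_{u\in U(F)/U(F)\cap K}f(mu)$, i.e.\ to $\mathcal{S}(f)$, whether or not $\mathcal{S}$ is a homomorphism. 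Hence $\mathcal{S}(\phi_w)$ is the trace-of-Frobenius function of $Rq^+_!(p^+)^*(\underline{\mathbb{F}}_p)_w$. Next I would use that $\Fl_w$ is $(\mathbb{G}_m)_{\perf}$-stable: as $\lambda$ factors through the maximal split torus we have $\lambda(t)\in L^+\mathcal{G}$, so left translation by $\lambda(t)$ preserves every Schubert cell and hence $\Fl_w$; since $\Fl_w$ is moreover closed in $\Fl_{\mathcal{G}}$, the preimage $(\Fl_w)^+:=(p^+)^{-1}(\Fl_w)$ is closed in $(\Fl_{\mathcal{G}})^+$ and $(p^+)^*(\underline{\mathbb{F}}_p)_w$ is the constant sheaf on it. Writing $(\Fl_w)^+=\bigsqcup_c(\Fl_w)^+_c$ for the decomposition indexed by $c\in\pi_1(M)^\sigma_I$ and $q^+_c\colon(\Fl_w)^+_c\to(\Fl_{\mathcal{M}})^c$ for the restriction of $q^+$, the problem reduces to computing, for each such $c$ and each $m\in(\Fl_{\mathcal{M}})^c(\mathbb{F}_q)$, the number $\#\bigl((q^+_c)^{-1}(m)\bigr)(\mathbb{F}_q)\bmod p$ --- the fibre being empty unless $m\in\Fl_w$.

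Now I would analyse the dichotomy between ``$(\Fl_w)^+_c$ closed in $\Fl_w$'' and not. Each fibre $F_m:=(q^+_c)^{-1}(m)$ is (the perfection of) a finite-type affine $\mathbb{F}_q$-scheme carrying a $(\mathbb{G}_m)_{\perf}$-action that contracts it to the point $m$. If $(\Fl_w)^+_c$ is closed in $\Fl_w$, then it is proper, its image $(\Fl_{\mathcal{M}})^c\cap\Fl_w$ is closed in $(\Fl_{\mathcal{M}})^c$, and $q^+_c$ --- being an affine morphism (affineness of attractor maps, in the perfect-scheme setting) which is also proper --- is finite; its fibres are connected (contracting action) and reduced (every scheme here is perfect), hence single points, so $q^+_c$ is an isomorphism onto $(\Fl_{\mathcal{M}})^c\cap\Fl_w$. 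Thus $Rq^+_{c,!}(\underline{\mathbb{F}}_p)$ is the extension by zero of the constant sheaf on $(\Fl_{\mathcal{M}})^c\cap\Fl_w$, whose trace-of-Frobenius function is exactly $\phi_{c,w}$. If instead $(\Fl_w)^+_c$ is not closed in $\Fl_w$, then $q^+_c$ is not an isomorphism; since, by the theory of the generalized Mirkovi\'c--Vilonen cycles, the fibres of $q^+_c$ all have the same dimension and that dimension vanishes precisely when $q^+_c$ is an isomorphism, every $F_m$ is positive-dimensional. Its only $(\mathbb{G}_m)_{\perf}$-fixed point $m$ is the \emph{sink} of the contracting action, contributing no $0$-dimensional cell; one then argues, as in the proof of Theorem~\ref{mainthm-conv-formula} --- fibering $F_m$ via a Demazure resolution $D_{\dot{w}}\to\Fl_w$ into pieces built from positive-dimensional families of Schubert varieties, each of which has $\#(\mathbb{F}_q)\equiv 1\pmod p$ by virtue of its unique Iwahori-fixed point --- that $\#F_m(\mathbb{F}_q)$ is a multiple of $q$, hence $\equiv 0\pmod p$. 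So $Rq^+_{c,!}(\underline{\mathbb{F}}_p)$ has vanishing trace of Frobenius.

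Assembling these gives $\mathcal{S}(\phi_w)=\sum_{c\,:\,(\Fl_w)^+_c\text{ closed}}\phi_{c,w}$. It remains to see that $(\Fl_w)^+_c$ is closed in $\Fl_w$ for at most one $c\in\pi_1(M)^\sigma_I$: by the characterization above, closedness says $(\Fl_w)^+_c$ equals its own fixed locus, and one checks, using the order on $\pi_1(M)^\sigma_I$ that controls the closures of the semi-infinite orbits, that this singles out at most one $c$ --- the $\preceq$-minimal index among those with $(\Fl_w)^+_c\neq\varnothing$, when a unique such index exists. This yields precisely the two cases of the theorem: the first when such a $c$ exists, the second (where the source of the $(\mathbb{G}_m)_{\perf}$-flow on $\Fl_w$ lies outside $\Fl_{\mathcal{M}}$) when it does not. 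The step I expect to be the main obstacle is the vanishing in the non-closed case: it couples the equidimensionality of the generalized MV cycles (to rule out mixed fibre dimensions) with an affine-type paving of the fibres, compatible with the $(\mathbb{G}_m)_{\perf}$-action and with no $0$-dimensional piece, so that the $\mathbb{F}_q$-count acquires a factor of $q$. A secondary technical point is justifying the function--sheaf translation of $Rq^+_!(p^+)^*$ with $\mathbb{F}_p$-coefficients on these perfections of singular, non-proper schemes; here Braden's hyperbolic-localization theorem, identifying $Rq^+_!(p^+)^*$ with $Rq^-_*(p^-)^!$, is the natural tool for controlling constructibility and stalks.
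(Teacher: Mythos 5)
Your overall skeleton is aligned with the paper: identify $\mathcal{S}$ with $Rq^+_!\circ(\iota^+)^*$ under the function--sheaf dictionary, restrict to the attractor $(\Fl_w)^+_c$, and argue that $R(q^+_c)_!(\mathbb{F}_p)$ is $\mathbb{F}_p[0]$ when $(\Fl_w)^+_c$ is the (unique) closed attractor and $0$ otherwise. The closed case is also essentially right (the paper shows $q^+_{i_0}$ is a universal homeomorphism, which on perfections is an isomorphism). You correctly flag the vanishing in the non-closed case as the crux; the problem is that your proposed argument for this step does not work.

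Concretely, you assert that ``the fibres of $q^+_c$ all have the same dimension and that dimension vanishes precisely when $q^+_c$ is an isomorphism.'' This equidimensionality is not established (nor used) in the paper, and in the general parahoric/twisted setting the fibres of $q^+_c$ over a singular Schubert scheme need not be equidimensional. Worse, your mod~$p$ count for a positive-dimensional fibre $F_m$ conflates the mechanism of Theorem~\ref{mainthm-conv-formula} with the one needed here: in the convolution theorem the fibres of $m$ are (perfections of) Schubert varieties and the relevant fact is $\#(\mathbb{F}_q)\equiv 1\pmod p$, whereas here you need the opposite conclusion $\equiv 0\pmod p$, and ``pieces with $\#\equiv 1$'' indexed by something positive-dimensional does not by itself force a factor of $q$ without carefully setting up a paving --- which is precisely what is missing. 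The paper's argument instead pulls everything back along a $\mathbb{G}_m$-equivariant Demazure resolution $\pi_{\dot w}^{\dep}\colon D_{\dot w}^{\dep}\to\Fl_w^{\dep}$ (Theorem~\ref{prop-equiv-res}), where Bia{\l}ynicki-Birula gives that each $\tilde q^+_j$ on the smooth source is an affine bundle, of positive relative dimension exactly on non-closed strata, so $R\tilde q^+_{j!}(\mathbb{F}_p)=0$; combined with $R\pi_{\dot w,!}(\mathbb{F}_p)=\mathbb{F}_p[0]$ and an excision/filtration argument, this yields $R(q^+_c)_!(\mathbb{F}_p)=0$ and hence the $\equiv 0\pmod p$ count. (In mixed characteristic the paper cannot invoke smoothness of a deperfection and instead runs an induction on the length of $\dot w$ proving the two required properties directly on $D_{\dot w}$.) Your appeal at the end to Braden's hyperbolic localization is unnecessary here: the paper avoids it entirely by working with $Rq^+_!$ alone, and the constructibility/compatibility issues you worry about are handled by the concrete Demazure model rather than $Rq^-_*(p^-)^!$. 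Finally, uniqueness of the closed $c$ is not extracted from a poset structure on $\pi_1(M)^\sigma_I$ as you suggest, but from the uniqueness of the closed Bia{\l}ynicki-Birula cell on the smooth (deperfected) resolution, pushed down through $\pi_{\dot w}$.
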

Over an algebraic closure, $(\Fl_{w, \overline{\mathbb{F}}_q})^+$ has a unique closed attractor (Theorem \ref{prop-equiv-res}), so the element $c$ in Theorem \ref{mainthm-satake-formula} is unique if it exists.
Furthermore, the element $c$ is characterized by the property that $q^+$ restricts to an isomorphism from the closed attractor onto its image. If $K$ is special there is always such an element $c$, but in general the inclusion $\Fl_{\mathcal{M}} \subset (\Fl_{\mathcal{G}})^0$ is strict. Informally, the idea behind the proof of Theorem \ref{mainthm-satake-formula} is that point-counting $(\text{mod } p)$ on Mirkovi\'c--Vilonen cycles may be performed on $D_{\dot{w}}$. Since the map $(D_{\dot{w}})^+ \rightarrow (D_{\dot{w}})^0$ is a disjoint union of perfected affine bundles, the fibres (when positive-dimensional) have vanishing compactly supported mod $p$ \'etale cohomology. This makes the point-counting more accessible.

\subsubsection{Special parahorics}
Now assume $M = C_G(A)$ is a minimal Levi subgroup and $\mathbf{f}$ is a special vertex. Then $\Lambda : = M(F)/(M(F) \cap K)$ is a finitely generated abelian group, and $\mathcal{H}_{M(F) \cap K}$ is canonically isomorphic to the group algebra $\mathbb{F}_p[\Lambda]$ (see \cite[\S 6]{HenniartVigneras:Satake} and \cite[\S 11]{HainesRostami:Satake} for more details).
For $z \in \Lambda$, let $e^z \in \mathbb{F}_p[\Lambda]$ be the associated element. 
The choice of $P$ determines a set of anti-dominant representatives $\Lambda_- \subset \Lambda$ for the orbits of the action of the finite Weyl group $W(G,A)$. Moreover, there is a canonical bijection of sets $\Lambda_- \cong W_{\mathbf{f}} \backslash W^{\sigma} / W_{\mathbf{f}}$. In this case, Theorem \ref{mainthm-conv-formula} and Theorem \ref{mainthm-satake-formula} recover the following result, originally due to Herzig \cite[Theorem 1.2]{Herzig:Satake} when $K$ is hyperspecial, and Henniart--Vign\'eras \cite[\S 1.5]{HenniartVigneras:Satake} when $K$ is special. Moreover, we recover the explicit formulas in \cite[Proposition 5.1]{Herzig:Classification} and \cite[Theorem 5.5]{Ollivier:Inverse} (where $G$ is split and $K$ is hyperspecial), and in \cite[Theorem 1.1]{InverseSatake} (where $G$ is arbitrary and $K$ is special).

\begin{theorem}[Theorem \ref{thm-special-satake}] \label{mainthm-special}
If $K$ is special, for $z_1, z_2 \in \Lambda_-$, we have $$m(\Fl_{z_1} \times^{L^+\mathcal{G}} \Fl_{z_2}) = \Fl_{z_1z_2} \quad \text{and} \quad \phi_{z_1} * \phi_{z_2} = \phi_{z_1z_2}.$$
In particular, $\mathcal{H}_{K}$ is commutative. Moreover, the Satake transform $\mathcal{S} \colon  \mathcal{H}_{K} \rightarrow \mathcal{H}_{M(F) \cap K}$ is injective, identifies $\mathcal{H}_{K}$ with $\mathbb{F}_p[\Lambda_-]$, and satisfies
$$\mathcal{S}(\phi_z) = e^{z}, \quad \text{for all } z \in \Lambda_-.$$
\end{theorem}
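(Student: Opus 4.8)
The plan is to deduce everything from Theorems~\ref{mainthm-conv-formula} and~\ref{mainthm-satake-formula}; the only genuinely new ingredients are two geometric facts specific to a special vertex. First I would establish the geometric identity $m(\Fl_{z_1}\times^{L^+\mathcal G}\Fl_{z_2})=\Fl_{z_1z_2}$. Restricted to the twisted product of two perfect Schubert schemes the convolution map $m$ is proper, so its image is closed; it is moreover irreducible and stable under the left $L^+\mathcal G$-action, and it contains the point $z_1z_2$, the image of $(z_1,z_2)$, which lies in the open $L^+\mathcal G$-orbit of $\Fl_{z_1z_2}$ because the sum of two antidominant elements is antidominant. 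Hence the image contains $\Fl_{z_1z_2}$. For the reverse inclusion one needs the support bound $Kz_1K\cdot Kz_2K\subseteq\bigcup_{v\le z_1z_2}KvK$, i.e.\ that every $L^+\mathcal G$-orbit meeting the image is indexed by some $v\in\Lambda_-$ below $z_1z_2$; for a special vertex this is the compatibility of the Bruhat order on $W_{\mathbf f}\backslash W^\sigma/W_{\mathbf f}$ with the dominance order on $\Lambda_-$, which comes out of the structure theory of the Iwahori--Weyl group (cf.~\cite{HainesRostami:Satake}). Granting this, the image equals $\Fl_{z_1z_2}$, and Theorem~\ref{mainthm-conv-formula} gives $\phi_{z_1}*\phi_{z_2}=\phi_{z_1z_2}$. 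Commutativity of $\mathcal H_K$ is then immediate, since $\Lambda$ is abelian and hence $z_1z_2$ and $z_2z_1$ are the same antidominant representative, so $\phi_{z_1}*\phi_{z_2}=\phi_{z_1z_2}=\phi_{z_2z_1}=\phi_{z_2}*\phi_{z_1}$ and $\{\phi_z\}_{z\in\Lambda_-}$ is an $\mathbb F_p$-basis.

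For the Satake transform I would invoke Theorem~\ref{mainthm-satake-formula}. Because $\mathbf f$ is special there is a unique $c\in\pi_1(M)^\sigma_I$ with $(\Fl_{\mathcal M})^c\cap\Fl_z$ closed in $\Fl_z$ (the closed attractor being unique by Theorem~\ref{prop-equiv-res}), so $\mathcal S(\phi_z)=\phi_{c,z}$, and it remains to identify $\phi_{c,z}$ with $e^z$ under the isomorphism $\mathcal H_{M(F)\cap K}\cong\mathbb F_p[\Lambda]$. Via $q^+$ the closed attractor maps isomorphically onto a connected closed subscheme of $\Fl_{\mathcal M}$; but the minimal Levi $M=C_G(A)$ is anisotropic modulo its centre, so $\Fl_{\mathcal M}$ is zero-dimensional, with underlying reduced scheme the discrete set $\Lambda$. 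Hence the closed attractor is a single $\mathbb F_q$-rational point, and by the chosen normalization of $\Lambda_-$ relative to $P$ this point is $z$ itself --- concretely the mixed-characteristic analogue of the classical statement that the extreme Mirkovi\'c--Vilonen cycle $\overline{\mathrm{Gr}^{\mu}}\cap S_{\mu}$ is the single torus-fixed point $t^{\mu}$ \cite[Theorem~3.2]{MirkovicVilonen:Geometric}. Thus $(\Fl_{\mathcal M})^c(\mathbb F_q)\cap\Fl_z(\mathbb F_q)=\{z\}$, so $\phi_{c,z}=e^z$ and $\mathcal S(\phi_z)=e^z$.

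It then remains to read off the structural consequences. The $e^z$, $z\in\Lambda_-$, are distinct basis vectors of $\mathbb F_p[\Lambda]$, so $\mathcal S$ is injective with image the $\mathbb F_p$-span of $\{e^z:z\in\Lambda_-\}$, which is the subalgebra $\mathbb F_p[\Lambda_-]$ because $\Lambda_-$ is a submonoid of $\Lambda$; and $\mathcal S$ is a ring homomorphism, since on basis elements $\mathcal S(\phi_{z_1}*\phi_{z_2})=\mathcal S(\phi_{z_1z_2})=e^{z_1z_2}=e^{z_1}e^{z_2}=\mathcal S(\phi_{z_1})\mathcal S(\phi_{z_2})$, and this extends bilinearly using commutativity of $\mathcal H_K$. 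The main obstacle is the pair of geometric inputs flagged above: the convolution-support bound $m(\Fl_{z_1}\times^{L^+\mathcal G}\Fl_{z_2})\subseteq\Fl_{z_1z_2}$, classical for split hyperspecial $K$ but needing the valued root datum for an arbitrary special parahoric, and the identification of the closed attractor of $\Fl_z$ with the single rational point $z$, which is exactly where the special-vertex hypothesis enters decisively.
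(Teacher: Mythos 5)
Your plan reverses the logical flow of the paper's argument, and it has a genuine gap at the key geometric step. The paper's proof runs as follows: it takes as external input that $\mathcal{S}$ is an algebra homomorphism (from \cite{HenniartVigneras:Satake}), proves $\mathcal{S}(\phi_z)=e^z$, and then \emph{deduces} both $\phi_{z_1}*\phi_{z_2}=\phi_{z_1z_2}$ and the geometric identity $m(\Fl_{z_1}\times^{L^+\mathcal G}\Fl_{z_2})=\Fl_{z_1z_2}$ by applying $\mathcal{S}$ to the output of Theorem~\ref{mainthm-conv-formula} and comparing basis elements in $\mathbb{F}_p[\Lambda]$. Your plan inverts this: you want to prove the geometric convolution identity directly (closed irreducible $L^+\mathcal G$-stable image containing the open cell, plus a support bound), obtain $\phi_{z_1}*\phi_{z_2}=\phi_{z_1z_2}$ from it, then verify $\mathcal{S}(\phi_z)=e^z$, and finally \emph{derive} that $\mathcal{S}$ is a homomorphism. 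This reversal is a real alternative and would be more self-contained, but the support bound $Kz_1K\cdot Kz_2K\subseteq\bigcup_{v\le z_1z_2}KvK$ that you invoke for the $\subseteq$ inclusion is exactly the thing the paper's argument is engineered to avoid proving by hand for a general special parahoric.

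The more serious gap is in your identification of $\phi_{c,z}$ with $e^z$. Knowing that $\Fl_{\mathcal M}$ is zero-dimensional (so the closed attractor in $(\Fl_z)^+$ maps to a single point of $(\Fl_z)^0$) does not tell you \emph{which} of the finitely many $T$-fixed points of $\Fl_z$ it is — a priori it could be any element of the $W(G,A)$-orbit of $z$ in $\Lambda$ — nor that this point actually lies in $\Fl_{\mathcal M}$ rather than merely in $(\Fl_{\mathcal G})^0$. Your appeal to ``the chosen normalization of $\Lambda_-$ relative to $P$'' and the analogy with the extreme Mirkovi\'c--Vilonen cycle $\overline{\Gr^\mu}\cap S_\mu$ is precisely the statement that needs proof, and the sign conventions here are easy to get wrong. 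The paper fills this gap with the combinatorial identity \eqref{eqn-special-intersection}, $KzK\cap zU(F)=z(U(F)\cap K)$ for $z\in\Lambda_-$, which via Lemma~\ref{lem-satake-k-points} shows $\pi_{c,z}^{-1}(\{z\})=\{z\}$; the function-sheaf dictionary then forces $R(\pi_{c,z})_!(\mathbb{F}_p)\neq 0$, and Theorem~\ref{thm-satake-k-points} identifies $(\Fl_{\mathcal M,\overline k})^c\cap\Fl_{z,\overline k}$ as the closed attractor. Some input of this kind (either \eqref{eqn-special-intersection} itself, or an explicit verification that the attractor associated to $\lambda$ closes up at the antidominant and not the dominant vertex) is indispensable and your sketch currently substitutes an assertion for it.
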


In comparison, when $k= \mathbb{C}$ and $K$ is special, the Hecke algebra $\mathcal{H}_{K}$ is also commutative  \cite[Theorem 1.0.1]{HainesRostami:Satake}, and the Satake transform identifies it with the $W(G,A)$-invariants in $\mathbb{C}[\Lambda]$. Once the geometric preliminaries in Theorem \ref{mainthm-conv-formula} and Theorem \ref{mainthm-satake-formula} are established, Theorem \ref{mainthm-special} follows from the identification $\Lambda = \pi_1(M)_I^\sigma$ \cite[Proposition 1.0.2]{HainesRostami:Satake}, and
the following combinatorial fact \cite[\S 6.9]{HenniartVigneras:Satake}:
\begin{equation} \label{eqn-special-intersection} KzK \cap zU(F) = z(U(F) \cap K), \quad z \in \Lambda_-.
\end{equation}

To make the connection with Theorem \ref{mainthm-satake-formula} explicit, note that when $z \in \Lambda_-$ (which may be viewed as a subset of both $\pi_1(M)_I^\sigma$ and $W_{\mathbf{f}} \backslash W^{\sigma} / W_{\mathbf{f}}$) we have $\phi_{z,z} = e^z$. The unique closed attractor in $(\Fl_z)^+$ is the single $\mathbb{F}_q$-point corresponding to $z \in \Lambda_-$.
Moreover, if $P \subset P' \subset G$ is any intermediate parabolic with Levi factor $M'
$, then $\pi_1(M')_I^\sigma$ is naturally identified with a submonoid of $\Lambda$ containing $\Lambda_-$.
For the $(\mathbb{G}_m)_{\perf}$-action induced by a cocharacter such that $P' = G^+$, the unique closed attractor in $(\Fl_z)^+$ is $(\Fl_{\mathcal{M}'})^z \cap \Fl_{z}$.

\begin{remark}
Let $V$ be a finite-dimensional representation of $K$ over a field $k$ of characteristic $p>0$. Associated to $V$ is the Hecke algebra $\mathcal{H}_K(V)$ of compactly supported functions $G(F) \rightarrow \End_k(V)$ which intertwine the left and right actions of $K$. Henniart--Vign\'eras \cite[\S 1.8]{HenniartVigneras:Satake} established a Satake isomorphism for $\mathcal{H}_K(V)$ similar to Theorem \ref{mainthm-special}. When $V=\mathbb{F}_p$ is the trivial representation, we have $\mathcal{H}_K = \mathcal{H}_K(V)$. It is an interesting question to geometrize these Hecke algebras $\mathcal{H}_K(V)$ with non-trivial weights as well.
\end{remark}

\subsection{Background} \label{sect-context}
We first comment on the appearance of Hecke algebras in the Local Langlands program, which predicts a relationship between smooth representations of $G(F)$ and representations of the Weil group of $F$ in the Langlands dual group ${}^LG$. An important tool in the study of smooth representations of $G(F)$ is the functor of $K$-invariants $V \mapsto V^K$, where $V$ is a representation of $G(F)$ over a field $k$ and $K \subset G(F)$ is a compact open subgroup. The space $V^K$ is naturally a module over the Hecke algebra $\mathcal{H}_K$. When $k = \mathbb{C}$, the functor $V \mapsto V^K$ is an equivalence between irreducible smooth representations of $G(F)$ admitting a $K$-fixed vector, and irreducible modules over $\mathcal{H}_K$. When $k$ has characteristic $p > 0$ the functor $V \mapsto V^K$ is no longer an equivalence. Nonetheless, the study of mod $p$ Hecke algebras plays an essential role in the mod $p$ Langlands program. 
 
 For example, mod $p$ Hecke algebras are used in the classification of irreducible admissible mod $p$ representations of $G(F)$ in terms of supercuspidals \cite{AHHV-classification, HV-19}.~Little is known about mod $p$ supercuspidal representations outside of a few small rank cases \cite{Breuil:Modulaires, Abdellatif:SL2, Koziol:U2}. However, numerical evidence \cite{Vigneras:Pro, Ollivier:Parabolic}, later upgraded to a functor \cite{GK1, GK2, GK3}, suggests a correspondence between supersingular modules over the pro-$p$ Iwahori Hecke algebra and mod $p$ Galois representations.
 
 The results in this paper are a first step toward a geometrization program of a mod $p$ Local Langlands Correspondence. We expect that our methods generalize to the pro-$p$ Iwahori Hecke algebra, which is geometrized by sheaves on the affine flag variety of a non-parahoric subgroup (the pro-unipotent radical of the Iwahori loop group). Moreover, our affine flag varieties are the special fibers of Beilinson--Drinfeld Grassmannians arising in $p$-adic geometry \cite{ScholzeWeinstein:Berkeley}. In this direction, Mann \cite{Mann:6} has developed a theory of $p$-adic \'etale sheaves which is expected to be useful in a $p$-adic version of the ground-breaking work of Fargues--Scholze \cite{FarguesScholze:Geometrization}. We thus expect a fruitful interplay between our work and the $p$-adic Langlands correspondence \cite{Colmez:GL2}, including groups beyond $\GL_2(\mathbb{Q}_p)$. We also expect a connection with \cite{Pepin-Schmidt-KL}, which relates the Emerton--Gee stack \cite{EmertonGee:Stack} to a Kazhdan--Lusztig style parametrization (as in \cite{KL:Deligne}) of mod $p$ Hecke modules for $\GL_2(\mathbb{Q}_p)$.

\subsubsection{Geometric Satake}
Our methods are inspired by Zhu's geometric Satake equivalence \cite{Zhu:Mixed}, where $F$ has characteristic zero.~This relates 
the category $P_{L^+\mathcal{G}}(\Fl_{\mathcal{G}})$ of $L^+\mathcal{G}$-equivariant perverse $\overline{\mathbb{Q}}_\ell$-sheaves on $\Fl_{\mathcal{G}}$ to 
algebraic representations of ${}^LG$ on finite-dimensional $\overline{\mathbb{Q}}_\ell$-vector spaces. The function-sheaf dictionary provides the connection with Hecke algebras by associating to each $\mathcal{F} \in P_{L^+\mathcal{G}}(\Fl_{\mathcal{G}})$ a function $\mathcal{F}^{\Tr} \colon K \backslash G(F) /K \rightarrow \overline{\mathbb{Q}}_\ell$. The convolution map $m \colon \Fl_{\mathcal{G}} \times^{L^+\mathcal{G}} \Fl_{\mathcal{G}} \rightarrow \Fl_{\mathcal{G}}$ encodes the convolution product $*$ in the following way \cite[Lemma 5.6.1]{Zhu:Intro}: for $\mathcal{F}_1$, $\mathcal{F}_2 \in P_{L^+\mathcal{G}}(\Fl_{\mathcal{G}})$, there is a perverse sheaf $\mathcal{F}_1 \tilde\boxtimes \mathcal{F}_2$ on $\Fl_{\mathcal{G}} \times^{L^+\mathcal{G}} \Fl_{\mathcal{G}}$ which satisfies $(Rm_!(\mathcal{F}_1 \tilde\boxtimes \mathcal{F}_2))^{\Tr} = \mathcal{F}_1^{\Tr} * \mathcal{F}_2^{\Tr}$. 

The papers \cite{Cass:Perverse, CP:Constant} established a similar story for perverse $\mathbb{F}_p$-sheaves on the affine Grassmannian, where $F$ has characteristic $p$ (again for $K$ hyperspecial). Notably, a monoid related to $\Spec(\mathcal{H}_K)$ appears rather than ${}^LG$, and the simple perverse $\mathbb{F}_p$-sheaves turn out to be constant sheaves supported on Schubert varieties. This reflects the nature of the singularities of these Schubert varieties \cite[Theorem 1.4, Theorem 1.7]{Cass:Perverse}, namely, their seminormalizations are Cohen-Macaulay and $F$-rational (see also \cite[Theorem 4.1]{FHLR:Local}). No appropriate analogue of these properties is currently known for Witt vector Schubert schemes, so in this paper we work directly with constant sheaves rather than perverse sheaves. Our results also generalize the mod $p$ Hecke algebra results in \cite{Cass:Perverse, CP:Constant} (where $F$ has equal characteristic) to non-split groups and arbitrary parahoric subgroups. Due to the lack of \emph{smooth} $\mathbb{G}_m$-equivariant deperfections of Witt vector Demazure schemes, we cannot apply all the foundational results of \cite[\S2]{CP:Constant}, cf. the proof of Theorem \ref{prop-equiv-res}.

In another direction, the papers \cite{RicharzScholbach:Satake, RicharzScholbach:Witt} (for rational coefficients) and \cite{CassvdHScholbach:MotivicSatake} (for integral coefficients and $F$ of equal characteristic) gave a motivic refinement of the geometric Satake equivalence for split reductive groups. The latter provides a geometrization of the generic hyperspecial Hecke algebra, i.e.~the Hecke algebra with coefficients in $\mathbb{Z}[\mathbf{q}]$, where $\mathbf{q}$ is an indeterminate (see \cite[\S 6.3]{CassvdHScholbach:MotivicSatake}). Specialization along the quotient map $p \mapsto \mathbf{q} \mapsto 0$ recovers the hyperspecial mod $p$ Hecke algebra. It would be interesting to generalize \cite{CassvdHScholbach:MotivicSatake} to $F$ of mixed characteristic and to find a realization functor from motivic sheaves to the mod $p$ \'etale sheaves in the current paper which lifts the specialization map on Hecke algebras.

\subsubsection{Constant terms}
Recall the parabolic with Levi decomposition $P = MU$ arising from the cocharacter $\lambda$. Associated to $K$ there are integral models $\mathcal{P}$ of $P$ and $\mathcal{M}$ of $M$. The natural maps $\mathcal{M} \leftarrow \mathcal{P}$, $\mathcal{P} \rightarrow \mathcal{G}$ and the fixed points $(\Fl_{\mathcal{G}})^0 $ and attractors $(\Fl_{\mathcal{G}})^+$ for the resulting $(\mathbb{G}_m)_{\perf}$-action on $\Fl_{\mathcal{G}}$ are related by the following commutative diagram.
$$\xymatrix{
\Fl_{\mathcal{M}} \ar[d]_{p^0} & \ar[l]_q \Fl_{\mathcal{P}} \ar[d]^{p^+} \ar[r]^{\iota} & \Fl_{\mathcal{G}} \ar[d]^{\Id} \\
(\Fl_{\mathcal{G}})^0 & \ar[l]_{q^+} (\Fl_{\mathcal{G}})^+ \ar[r]^{\iota^+}  & \Fl_{\mathcal{G}}
}$$
The maps $p^0$ and $p^+$ are open and closed immersions \cite[Theorem 5.2]{AGLR:Local}, so they are inclusions of some connected components. The sheaf functor $Rq_! \circ \iota^*$ is called a \textit{constant term functor} in the geometric Langlands program, and it corresponds to the Satake transform
under the function-sheaf dictionary (Theorem \ref{thm-satake-k-points}). In particular, the fibers of $q$ are $U(F)$-orbits. The term ``constant term functor'' reflects the fact that $Rq_! \circ \iota^*$ is a local analogue of a functor, obtained by replacing $\Fl_{\mathcal{G}}$ with the moduli stack of $G$-bundles on a global curve in positive characteristic, which encodes the constant term operation on automorphic functions under the function-sheaf dictionary \cite{GaitsgoryBraverman:Eisenstein, GaitsgoryDrinfeld:Constant}. In the text, we work only with the bottom row of this diagram, which is sufficient since $Rq_! \circ \iota^*$ is a direct summand of $Rq^+_! \circ (\iota^+)^*$. Furthermore, because the $*$-pullback of a constant sheaf is constant, we will focus our attention on the functor $Rq^+_!$.
\\ \\
\noindent\textbf{Acknowledgements.} The authors would like to thank João Lourenço for comments on an earlier version of this paper and for helpful discussions on deperfections of perfect Schubert schemes. The authors would also like to thank Tom Haines, Claudius Heyer, Thibaud van den Hove, and Timo Richarz for helpful comments. R.C.~was supported by NSF grant DMS~2103200. Y.X.~was supported by NSF grant DMS~2202677. The authors would like to thank the University of Michigan, MIT and Harvard for providing wonderful working environments.

\addtocontents{toc}{\protect\setcounter{tocdepth}{2}}
\section{Geometric results} \label{sect-Geometric-results}
\subsection{Witt vector affine flag varieties}
\subsubsection{Notations}
Let $F$ be a complete discrete valuation field with ring of integers $\mathcal{O}_F$ and perfect residue field $k$ of characteristic $p>0$. We assume that $k$ is a finite field or an algebraic closure thereof. Fix a uniformizer $\varpi \in \mathcal{O}_F$. If $F$ has characteristic zero, for a $k$-algebra $R$ let $W(R)$ be its ring of Witt vectors, and let $W_{\mathcal{O}_F}(R) = W(R) \otimes_{W(k)} \mathcal{O}_F$. Let $W_{\mathcal{O}_F, n}(R) = W(R) \otimes_{W(k)} \mathcal{O}_F/ \varpi^n$. If $F$ has characteristic $p$, the choice of $\varpi$ induces an isomorphism $F \cong k ( \!(t) \!)$. 
For a $k$-scheme $X$, we denote its perfection by
$$X_{\perf}:= \lim (\cdots \xrightarrow{\text{Fr}} X \xrightarrow{\text{Fr}} X ), $$ where $\text{Fr}$ is the absolute Frobenius morphism. The underlying topological spaces of $X$ and $X_{\perf}$ are canonically isomorphic. Moreover, there is a natural equivalence of small \'etale sites 
$X_{\et} \cong (X_{\perf})_{\et}$ \cite[Tag 04DY]{stacks-project}. In particular, the \'etale cohomology groups of $X$ and $X_{\perf}$ are canonically isomorphic.

\subsubsection{The affine flag variety}
Let $G$ be a connected reductive group over $F$ and let $\mathscr{B}(G,F)$ be its Bruhat--Tits building.
For each facet $\mathbf{f} \subset \mathscr{B}(G,F)$, let $\mathcal{G} := \mathcal{G}_{\mathbf{f}}$ be the parahoric $\mathcal{O}_F$-group scheme with generic fiber $G$ as in \cite[D\'efinition 5.2.6 ff.]{BruhatTits:II}. Let $K := \mathcal{G}(\mathcal{O}_F)$. If $F$ has characteristiz zero, following \cite{Zhu:Mixed} we define the following functors on \emph{perfect} $k$-algebras:
\begin{equation} \label{eqn-loop-groups} LG \colon R \mapsto G(W_{\mathcal{O}_F}(R)[1/p]), \quad L^+\mathcal{G} \colon R \mapsto \mathcal{G}(W_{\mathcal{O}_F}(R)).\end{equation} The functor $LG$ is the called the loop group, and $L^+\mathcal{G}$ is called the positive loop group. We also define the functor
$L^n\mathcal{G} \colon R \mapsto \mathcal{G}(W_{\mathcal{O}_F, n}(R)).$ Then $L^+\mathcal{G} = \displaystyle\lim_{\longleftarrow} L^n\mathcal{G}$.
The affine flag variety of $\mathcal{G}$ is the functor on perfect $k$-algebras given by the \'etale-quotient
$$\Fl_{\mathcal{G}} = LG/L^+\mathcal{G}.$$
By \cite{BhattScholze:Projectivity}, $\Fl_{\mathcal{G}}$ is represented by an increasing union of perfections of projective $k$-schemes.

If $F$ has characteristic $p$, the loop group (resp. positive loop group) is $LG(R) = G(R(\!(t)\!))$ (resp. $L^+\mathcal{G}(R) = \mathcal{G}(R[\![t]\!])$. We set $L^n\mathcal{G}(R) = G(R[t]/t^n)$. These functors are usually defined on all $k$-algebras, but for uniformity of exposition we restrict to perfect $k$-algebras. Then the \'etale-quotient $\Fl_{\mathcal{G}} = LG/L^+\mathcal{G}$ is represented by the perfection of the ind-projective $k$-scheme constructed in \cite{PappasRapoport:Twisted}.

\subsubsection{Greenberg functors}
If $F$ has characteristic zero, there is no canonical ind-projective $k$-scheme whose perfection is $\Fl_{\mathcal{G}}$. The problem is that the natural extension of the functor $LG$ to non-perfect $k$-algebras is not well-behaved. (For example, if $R$ is non-perfect, $W_{\mathcal{O}_F}(R)$ can have $p$-torsion, in which case $W_{\mathcal{O}_F}(R)$ is not a submodule of $W_{\mathcal{O}_F}(R)[1/p]$.) However, as observed by Greenberg \cite{Greenberg:Schemata}, the functor $L^+\mathcal{G}$ is well-behaved on all $k$-algebras. Following \cite{Zhu:Mixed}, we define the following functors on all $k$-algebras:
$$L^+_p\mathcal{G} \colon R \mapsto G(W_{\mathcal{O}_F}(R)), \quad L^n_p\mathcal{G} \colon R \mapsto \mathcal{G}(W_{\mathcal{O}_F,n}(R)), \: n \geq 1.$$
Then $L^+_p\mathcal{G} = \displaystyle\lim_{\longleftarrow} L^n_p\mathcal{G}$ and $L^+\mathcal{G} = (L^+_p\mathcal{G})_{\perf}$. The functor $L^n_p\mathcal{G}$ is represented by a $k$-scheme of finite-type. 
If $F$ has characteristic $p$, let $L_pG$, $L^+_p\mathcal{G}$, and $L^n_p \mathcal{G}$ be the extensions of  $LG$, $L^+\mathcal{G}$, and $L^n \mathcal{G}$ to all $k$-algebras. There are no issues with $L_pG$ because $R[\![t]\!]$ never has $t$-torsion. We will use these functors to construct \textit{deperfections} of finite-dimensional subschemes of $\Fl_{\mathcal{G}}$, i.e.~finite-type $k$-schemes whose perfections are subschemes of $\Fl_{\mathcal{G}}$.

\subsubsection{Rational points}
Let $\breve{F}$ be the completion of a maximal unramified extension of $F$ and let $\overline{k}$ be its residue field. Since $\overline{k}$ is local for the \'etale topology,
$$\Fl_{\mathcal{G}}(\overline{k}) = LG(\overline{k})/L^+\mathcal{G}(\overline{k}) = G(\breve{F})/\mathcal{G}(\mathcal{O}_{\breve{F}}).$$

\begin{lem} \label{lem-k-points}
We have
$\Fl_{\mathcal{G}}(k) = G(F)/\mathcal{G}(\mathcal{O}_F)$.
\end{lem}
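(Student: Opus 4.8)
The plan is to bootstrap from the description $\Fl_{\mathcal G}(\overline k)=G(\breve F)/\mathcal G(\mathcal O_{\breve F})$ recorded just above, combining Galois descent along $k\hookrightarrow\overline k$ with Lang's theorem for the parahoric group scheme $\mathcal G$. The assertion is vacuous when $k$ is algebraically closed (then $\breve F=F$ and $\overline k=k$), so we may assume $k=\mathbb F_q$ is finite; write $\sigma=\Frob$, which topologically generates both $\Gal(\overline k/k)$ and $\Gal(\breve F/F)$, compatibly with the action of the latter on $\breve F$.

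First I would reduce to a statement about $\sigma$-invariants. Since $\Fl_{\mathcal G}$ is an increasing union of perfections of projective $k$-schemes along closed immersions, and since taking $\Gal(\overline k/k)$-invariants commutes with such a filtered colimit (the transition maps being injective on points), ordinary Galois descent of points along the pro-finite \'etale cover $\Spec\overline k\to\Spec k$ gives $\Fl_{\mathcal G}(k)=\Fl_{\mathcal G}(\overline k)^{\sigma}$. Combined with the $\sigma$-equivariant identification $\Fl_{\mathcal G}(\overline k)=G(\breve F)/\mathcal G(\mathcal O_{\breve F})$, it remains to prove
\[
\bigl(G(\breve F)/\mathcal G(\mathcal O_{\breve F})\bigr)^{\sigma}=G(F)/\mathcal G(\mathcal O_F).
\]
The inclusion $\supseteq$ is immediate. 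For $\subseteq$, take $g\in G(\breve F)$ whose coset is $\sigma$-fixed, i.e.\ $c:=g^{-1}\sigma(g)\in\mathcal G(\mathcal O_{\breve F})$; then for $k_0\in\mathcal G(\mathcal O_{\breve F})$ the element $gk_0$ represents the same coset and lies in $G(F)$ if and only if $c=k_0\,\sigma(k_0)^{-1}$, so it suffices to show that the Lang map $x\mapsto x\,\sigma(x)^{-1}$ on $\mathcal G(\mathcal O_{\breve F})$ is surjective.

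This last point is the crux, and it is exactly where the hypothesis that $\mathcal G$ is \emph{parahoric} — hence smooth with connected special fibre $\mathcal G_{\mathbb F_q}$ — is used. I would solve $c=k_0\,\sigma(k_0)^{-1}$ by successive approximation along $\mathcal G(\mathcal O_{\breve F})=\varprojlim_n\mathcal G(\mathcal O_{\breve F}/\varpi^n)$. Modulo $\varpi$ the equation lives in $\mathcal G_{\mathbb F_q}(\overline{\mathbb F}_q)$ and is solvable by the classical Lang--Steinberg theorem for the connected group $\mathcal G_{\mathbb F_q}$; and a solution at level $n$ lifts to level $n+1$ because, $\mathcal G$ being smooth, the kernel of $\mathcal G(\mathcal O_{\breve F}/\varpi^{n+1})\to\mathcal G(\mathcal O_{\breve F}/\varpi^{n})$ is a vector group over $\overline{\mathbb F}_q$, on which the obstruction to lifting is the value of an additive map of the shape $v\mapsto v-\sigma'(v)$, surjective by Artin--Schreier (equivalently, connectedness of $\mathbb G_a$). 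Passing to the limit produces the desired $k_0$.

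Equivalently, and more invariantly, the obstruction class lies in $H^1(\langle\sigma\rangle,\mathcal G(\mathcal O_{\breve F}))=H^1_{\et}(\Spec\mathcal O_F,\mathcal G)$, which vanishes: any $\mathcal G$-torsor over the henselian local ring $\mathcal O_F$ has special fibre a $\mathcal G_{\mathbb F_q}$-torsor over $\mathbb F_q$, hence trivial by Lang, and a smooth torsor with a rational point over the closed point has a section over $\mathcal O_F$. The only real content is this transfer of Lang's theorem from the residue field up to the loop group; everything else — the descent step, the equivariance, the reduction to surjectivity of the Lang map — is formal.
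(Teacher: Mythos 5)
Your argument is correct and follows the same outline as the paper's own proof: reduce to the vanishing of $H^1(\Gal(\overline{k}/k), \mathcal{G}(\mathcal{O}_{\breve{F}}))$ and conclude via the exact sequence in non-abelian Galois cohomology. The only difference is that the paper cites this vanishing as a black box (from Kaletha--Prasad), whereas you rederive it from Lang--Steinberg on the connected special fibre together with successive approximation along the congruence filtration; both are valid, yours being more self-contained at the cost of length.
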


\begin{proof}
This follows from the fact that $H^1(\Gal(\overline{k}/k), \mathcal{G}(\mathcal{O}_{\breve{F}})) = 0$ \cite[Lemma 8.1.4]{KalehtaPrasad:Bruhat} and the exact sequence in Galois cohomology \cite[Proposition 36]{Serre:Galois}.
\end{proof}

\subsubsection{Perfect Schubert schemes} \label{sect-shubert-schemes} Choose a maximal $F$-split torus $A$ such that $\mathbf{f}$ is contained in the apartment $\mathscr{A}(G,A,F)$. 
Let $\mathbf{f}' \subset \mathscr{A}(G,A,F)$ be another facet. 
For $w \in L^+ \mathcal{G}_{\mathbf{f}'}(k) \backslash LG(k) /  L^+ \mathcal{G}_{\mathbf{f}}(k)$, define the orbit $$\Fl_{w}^\circ(\mathbf{f}', \mathbf{f}) := L^+ \mathcal{G}_{\mathbf{f}'} \cdot \tilde{w} \cdot e \subset \Fl_{\mathcal{G}},$$ where $\tilde{w} \in LG(k)$ is any lift of $w$ and $e$ is the basepoint of $\Fl_{\mathcal{G}_{\mathbf{f}}}$. Let $\Fl_{w}(\mathbf{f}', \mathbf{f})$ be the closure of $\Fl_{w}^\circ(\mathbf{f}', \mathbf{f})$. The stabilizer of $\tilde{w} \cdot e$ in $L^+ \mathcal{G}_{\mathbf{f}'}$ is the positive loop group of the parahoric $\mathcal{O}_{F}$-group scheme associated to $\mathbf{f}' \cup w \mathbf{f}$, cf. \cite[Proposition 3.7]{AGLR:Local}. It follows that $\Fl_{w}^\circ(\mathbf{f}', \mathbf{f})$ is canonically isomorphic to the perfection of a smooth quasi-projective $k$-scheme. Furthermore, $\Fl_{w}(\mathbf{f}', \mathbf{f})$ is the perfection of a projective $k$-scheme by \cite{BhattScholze:Projectivity} (if $F$ has characteristic zero) and \cite{PappasRapoport:Twisted} (if $F$ has characteristic $p$). We call $\Fl_{w}^\circ(\mathbf{f}', \mathbf{f})$ a \emph{perfect Schubert cell} and $\Fl_{w}(\mathbf{f}', \mathbf{f})$ a \emph{perfect Schubert scheme}.

Following \cite{HainesRapoport:Parahoric, AGLR:Local}, the perfect Schubert schemes can be parametrized as follows. Fix a maximal $\breve{F}$-split torus $S \subset G$ containing $A$ and let $T = C_G(S)$. Then $T$ is a maximal torus, and we obtain a chain of $F$-tori
$A \subset S \subset T.$
The connected N\'eron model $\mathcal{T}$ of $T$ over $\mathcal{O}_F$ is contained in $\mathcal{G}$ \cite[Proposition 8.2.4]{KalehtaPrasad:Bruhat}, and $\mathcal{T}(\mathcal{O}_{\breve{F}})$ is the unique parahoric subgroup of $T(\breve{F})$.
The Iwahori--Weyl group is
\begin{equation} \label{eqn-Iwahori-Weyl}
     W := N_G(S)(\breve{F})/ \mathcal{T}(\mathcal{O}_{\breve{F}}).
\end{equation}
Choose an alcove $\mathbf{a}$ in the apartment $\mathscr{A}(G,A,F)$. This determines a splitting $W = W_{\af} \rtimes \pi_1(G)_I$. Here $W_{\af} \subset W$ is the affine Weyl group, $I$ is the inertia group of $F$, and $\pi_1(G)$ is the algebraic fundamental group. As $W_{\af}$ is a Coxeter group, we may use this splitting to extend the length function $\ell$ and partial Bruhat order to $W$ by declaring elements of $\pi_1(G)_I$ to have length $0$. The Iwahori group scheme over $\mathcal{O}_F$ associated to the alcove $\mathbf{a}$ will be denoted by $\mathcal{I} : = \mathcal{G}_{\mathbf{a}}.$

The facets $\mathbf{f}, \mathbf{f}'$ are contained in unique facets $\breve{\mathbf{f}}, \breve{\mathbf{f}}'$ in $\mathscr{A}(G,\breve{F},S)$. Then $\mathcal{G}_{\breve{\mathbf{f}}} = \mathcal{G}_{\mathbf{f}} \times \Spec(\mathcal{O}_{\breve{F}})$.
Let $W_{\breve{\mathbf{f}}} = (N_G(S)(\breve{F}) \cap \mathcal{G}_{\mathbf{f}} (\mathcal{O}_{\breve{F}}))/\mathcal{T}(\mathcal{O}_{\breve{F}})$.
By the Bruhat decomposition (see \cite[Proposition 8]{HainesRapoport:Parahoric}),
$$W_{\breve{\mathbf{f}}'} \backslash W / W_{\breve{\mathbf{f}}}  = L^+ \mathcal{G}_{\mathbf{f}'}(\overline{k}) \backslash LG(\overline{k}) /  L^+ \mathcal{G}_{\mathbf{f}}(\overline{k}).$$
Let $\sigma \in \Gal(\overline{k}/k)$ be a geometric Frobenius element (if $\overline{k} = k$ then we let $\sigma$ be the identity) and let $W_{\mathbf{f}} :=  W_{\breve{\mathbf{f}}}^\sigma$.
By \cite[Remark 9]{HainesRapoport:Parahoric}, passing to $\sigma$-invariants induces a bijection
\begin{equation} \label{eqn-k-points}
    W_{\mathbf{f}'} \backslash W^\sigma / W_{\mathbf{f}}  = L^+ \mathcal{G}_{\mathbf{f}'}(k) \backslash LG(k) /  L^+ \mathcal{G}_{\mathbf{f}}(k).
\end{equation}

\subsubsection{Closure relations and dimensions}
From now on we assume that $\mathbf{f}$ and $\mathbf{f}'$ are contained in the closure of the alcove $\mathbf{a}$. By the reduction steps in \cite[\S 3.1]{HainesRicharz:Local}, over $\overline{k}$ every $\Fl_{w}(\mathbf{f}', \mathbf{f})$ is isomorphic to a perfect Schubert scheme for a pair $(\mathbf{f}', \mathbf{f})$ satisfying this assumption. 

\textit{For the remainder of Section \ref{sect-Geometric-results} we assume $k = \overline{k}$ unless otherwise stated.} This implies $F= \breve{F}$.
The results still apply when $k$ is finite if we restrict to perfect Schubert schemes defined over $k$.
In this subsection we recall one of the main results in \cite{Richarz:Schubert}. We note that the results in \textit{op.~cit.}~are stated in the case where $F$ has equal characteristic, but the proofs apply verbatim in mixed characteristic.

By \cite[Lemma 1.6]{Richarz:Schubert}, for $w \in W$ there is a unique element $w ^\mathbf{f}$ of minimal length in $w W_{\mathbf{f}}$, and a unique element $_{\mathbf{f}'} w ^\mathbf{f}$ of maximal length in $\{(vw)^\mathbf{f} \: : \: v \in W_{\mathbf{f}'}\}$. Taking equivalence classes gives a natural bijection
$$_{\mathbf{f}'} W ^\mathbf{f} := \{ _{\mathbf{f}'} w ^\mathbf{f} \: : \: w \in W\} \cong W_{\mathbf{f}'} \backslash W / W_{\mathbf{f}}.$$

\begin{lem} \label{lem-closure}
The perfect Schubert scheme $\Fl_{w}(\mathbf{f}', \mathbf{f})$ is set-theoretically the following union of locally closed perfect Schubert cells.
$$\Fl_{w}(\mathbf{f}', \mathbf{f}) = \bigsqcup_{\substack{v \in _{\mathbf{f}'} W ^\mathbf{f} \\ v \leq _{\mathbf{f}'} w ^\mathbf{f}}} \Fl_{v}^\circ(\mathbf{f}', \mathbf{f}).$$ The dimension of $\Fl_{w}(\mathbf{f}', \mathbf{f})$ is $\ell(_{\mathbf{f}'} w ^\mathbf{f})$.
\end{lem}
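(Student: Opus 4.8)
The plan is to reduce to the known Iwahori case and transport the statement along the projection maps between partial affine flag varieties. Recall that for the Iwahori group scheme $\mathcal{I} = \mathcal{G}_{\mathbf{a}}$, the Bruhat–Tits theory of affine flag varieties gives the cell decomposition $\Fl_{\mathcal{I}} = \bigsqcup_{v \in W} \Fl_v^\circ(\mathbf{a},\mathbf{a})$, each $\Fl_v^\circ(\mathbf{a},\mathbf{a})$ being the perfection of an affine space of dimension $\ell(v)$, together with the closure relation $\Fl_w(\mathbf{a},\mathbf{a}) = \bigsqcup_{v \le w} \Fl_v^\circ(\mathbf{a},\mathbf{a})$; this is classical (and stated in \cite{Richarz:Schubert} in equal characteristic, with proofs applying verbatim in mixed characteristic, as noted in the text). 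The two natural projections are $\pi_{\mathbf{f}'} \colon \Fl_{\mathcal{G}_{\mathbf{f}'}} \leftarrow \Fl_{\mathcal{I}}$-side on the left and $\pi_{\mathbf{f}} \colon \Fl_{\mathcal{I}} \to \Fl_{\mathcal{G}_{\mathbf{f}}}$ on the right; more precisely I would use the single map $\pi \colon \Fl_{\mathcal{I}} \to \Fl_{\mathcal{G}_{\mathbf{f}}}$ together with the $L^+\mathcal{G}_{\mathbf{f}'}$-action on the target to handle the left quotient.

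First I would record that $\pi \colon \Fl_{\mathcal{I}} \to \Fl_{\mathcal{G}_{\mathbf{f}}}$ is a (perfection of a) Zariski-locally trivial fibration with fibers isomorphic to the finite-dimensional partial flag variety $\mathcal{G}_{\mathbf{f}}/\mathcal{I}$ over the residue field (this is where $\mathbf{f}$ being in the closure of $\mathbf{a}$ is used), hence proper, surjective, and with irreducible fibers of dimension equal to $\ell$ of the longest element of $W_{\mathbf{f}}$. Second, using that $w^{\mathbf{f}}$ is the minimal-length representative of $wW_{\mathbf{f}}$, I would identify $\Fl_w^\circ(\mathbf{f}',\mathbf{f})$, as a subset of $\Fl_{\mathcal{G}_{\mathbf{f}}}$, with the image under $\pi$ (composed with the $L^+\mathcal{G}_{\mathbf{f}'}$-action) of a union of Iwahori Schubert cells; concretely $L^+\mathcal{G}_{\mathbf{f}'} \cdot \tilde w \cdot e$ pulls back along $\pi$ to $\bigsqcup_{v} \Fl_v^\circ(\mathbf{a},\mathbf{a})$ where $v$ ranges over $W_{\mathbf{f}'} \cdot {}_{\mathbf{f}'}w^{\mathbf{f}} \cdot W_{\mathbf{f}}$-related classes, using the standard fact (e.g. \cite[Lemma 1.6]{Richarz:Schubert}) that $\ell({}_{\mathbf{f}'}w^{\mathbf{f}})$ is the maximal length in this double-coset-with-bounded-left-part set and that the whole interval $\{v \le {}_{\mathbf{f}'}w^{\mathbf{f}}\}$ breaks up compatibly.

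Third, since $\pi$ is proper, the image of a closed set is closed, so $\pi$ carries $\Fl_{{}_{\mathbf{f}'}w^{\mathbf{f}}}(\mathbf{a},\mathbf{a}) = \bigsqcup_{v \le {}_{\mathbf{f}'}w^{\mathbf{f}}} \Fl_v^\circ(\mathbf{a},\mathbf{a})$ onto a closed subset of $\Fl_{\mathcal{G}_{\mathbf{f}}}$ containing $\Fl_w^\circ(\mathbf{f}',\mathbf{f})$, and I would check this closed image is exactly $\Fl_w(\mathbf{f}',\mathbf{f})$ and decomposes as the asserted disjoint union of cells $\Fl_v^\circ(\mathbf{f}',\mathbf{f})$ with $v \in {}_{\mathbf{f}'}W^{\mathbf{f}}$, $v \le {}_{\mathbf{f}'}w^{\mathbf{f}}$, by pushing forward the Iwahori stratification and using that $\pi$ maps the cell $\Fl_v^\circ(\mathbf{a},\mathbf{a})$ onto $\Fl_{\bar v}^\circ(\mathbf{f}',\mathbf{f})$ where $\bar v$ is the class of $v$ (a calculation in $W$ with the length-additivity $\ell(xy) = \ell(x) + \ell(y)$ for $x \in W$, $y \in W_{\mathbf{f}}$ with $x$ minimal in its coset). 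For the dimension statement, since $\pi$ restricted to the closure of the cell over $\Fl_w^\circ(\mathbf{f}',\mathbf{f})$ is generically finite onto its image (the top-length representative ${}_{\mathbf{f}'}w^{\mathbf{f}}$ is already $\mathbf{f}$-reduced, so no collapsing happens on the open cell), $\dim \Fl_w(\mathbf{f}',\mathbf{f}) = \dim \Fl_{{}_{\mathbf{f}'}w^{\mathbf{f}}}(\mathbf{a},\mathbf{a}) = \ell({}_{\mathbf{f}'}w^{\mathbf{f}})$ up to subtracting the fiber dimension of $\pi$ on the pieces that do collapse — here I would instead argue more cleanly that the open cell $\Fl_w^\circ(\mathbf{f}',\mathbf{f})$, being a quotient of $L^+\mathcal{G}_{\mathbf{f}'}$, is the perfection of a smooth variety whose dimension is computed directly from the orbit-stabilizer description in \S\ref{sect-shubert-schemes}, giving $\ell({}_{\mathbf{f}'}w^{\mathbf{f}})$, and then the dimension of the closure equals that of its dense open cell.

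The main obstacle I expect is the bookkeeping in the second and third steps: matching the combinatorics of the double-coset representatives ${}_{\mathbf{f}'}w^{\mathbf{f}}$ and the Bruhat interval $\{v \le {}_{\mathbf{f}'}w^{\mathbf{f}}\}$ with the geometry of the fibers of $\pi$ on both the left and the right, in particular verifying that no two distinct classes $v \in {}_{\mathbf{f}'}W^{\mathbf{f}}$ below ${}_{\mathbf{f}'}w^{\mathbf{f}}$ give the same cell and that every such class does appear. This is essentially the content of \cite[Lemma 1.6]{Richarz:Schubert} plus standard length-additivity identities for minimal/maximal coset representatives in the Iwahori–Weyl group, so I would cite those and keep the geometric argument (properness $\Rightarrow$ closed image, fibration $\Rightarrow$ cells map to cells) as the substance of the proof.
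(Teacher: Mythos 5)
Your plan matches the paper's approach, which is simply to cite \cite[Proposition 2.8]{Richarz:Schubert} and note that the proof there proceeds exactly as you describe: establish the Iwahori case via the Demazure resolution, then analyze the fibers of $\Fl_{\mathcal{I}} \to \Fl_{\mathcal{G}_{\mathbf{f}}}$ using that $L^+\mathcal{G}_{\mathbf{f}}/L^+\mathcal{I}$ is the perfection of the finite flag variety $\overline{P}^{\mathrm{red}}/\overline{B}$ of the special fiber. Your reconstruction (Iwahori cell decomposition as known input, properness of the projection to push forward the stratification, combinatorics of minimal/maximal double-coset representatives, and dimension read off from the smooth open orbit) is the substance of the cited argument, so this is essentially the paper's route, fleshed out.
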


\begin{proof}
This is \cite[Proposition 2.8]{Richarz:Schubert}. When $\mathbf{f}=\mathbf{a}$, this is proved using the Demazure resolution \eqref{eqn-Demazure-def}. For general $\mathbf{f}$, one analyzes the fibers of $\Fl_{\mathcal{I}} \rightarrow \Fl_{\mathcal{G}}$ using that $L^+\mathcal{G}/L^+\mathcal{I}$ is the perfection of a flag variety $\overline{P}^{\text{red}}/\overline{B}$ (see for example \eqref{eqn-flag-variety}).
\end{proof}

\subsubsection{Deperfections}
An integral domain $A$ is said to be $p$-\emph{closed} if for every $a \in \Frac(A)$ such that $a^p \in A$, we have $a \in A$. An integral scheme is $p$-closed if it has a covering by open affines which are $p$-closed. We recall the following special case of the construction in \cite[Proposition A.15]{Zhu:Mixed}.
\begin{prop} \label{prop-deperfection-construction}
Let $X$ be a perfect scheme over $k$ which is isomorphic to the perfection of an integral projective $k$-scheme, and let $k(X)$ be the field of rational functions on $X$. 

Then for each subfield $L \subset k(X)$ which is finitely generated over $k$ and whose perfection is $k(X)$, there exists a unique $p$-closed\footnote{By the remark after Proposition 1 in \cite{Itoh-weak-normality}, $p$-closure is equivalent to weak normality.}  projective $k$-scheme $X'$ such that $k(X') = L$ and $(X')_{\perf} = X$.
\end{prop}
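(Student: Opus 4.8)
The plan is to construct the deperfection directly as an intersection of structure sheaves inside the constant sheaf $k(X)$; this yields both existence and uniqueness at once. Throughout I identify the underlying topological spaces of $X$ and of any deperfection of it (legitimate, since perfection does not change the space), and I regard all the rings that appear as subrings of $k(X)$.

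\textbf{Reduction.} First I would choose an integral projective $k$-scheme $X_0$ with $(X_0)_{\perf} = X$ and, in addition, $k(X_0) \subseteq L$. Such an $X_0$ exists: starting from any deperfection of $X$ and using that $k(X_0)$ is a finitely generated subfield of $k(X)$ with the same perfect closure as $L$, every generator of $k(X_0)/k$ has a $p$-power in $L$, so $k(X_0)^{p^m} \subseteq L$ for $m \gg 0$; the $m$-fold Frobenius twist $X_0^{(p^m)}$ is again integral and projective, still deperfects $X$, and has function field $k(X_0)^{p^m} \subseteq L$ inside $k(X)$. Having arranged this, $L/k(X_0)$ is algebraic (the two fields have equal transcendence degree over $k$, as their perfect closures agree), finitely generated, hence finite, and purely inseparable.

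\textbf{Construction.} Set $\mathcal{A} := \mathcal{O}_X \cap \underline{L}$, the intersection taken inside the constant sheaf $k(X)$ on $|X| = |X_0|$. On an affine open $\Spec A_0 \subseteq X_0$ one has $\mathcal{O}_X|_{\Spec A_0} = \widetilde{A_0^{\perf}}$ with $A_0^{\perf} = \colim(A_0 \xrightarrow{\text{Fr}} A_0 \xrightarrow{\text{Fr}} \cdots) = \{a \in k(X) : a^{p^n} \in A_0 \text{ for some } n \geq 0\}$, so $\mathcal{A}(\Spec A_0) = A_0^{\perf} \cap L$. Every element of this ring satisfies a monic polynomial $T^{p^n} - a^{p^n} \in A_0[T]$ and lies in $L$, hence $A_0^{\perf} \cap L$ is contained in the integral closure of $A_0$ in $L$, which is a finite $A_0$-module because $A_0$ is a finitely generated $k$-algebra and $L/k(X_0)$ is finite. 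Therefore $\mathcal{A}$ is a coherent sheaf of $\mathcal{O}_{X_0}$-algebras, and $X' := \Spec_{X_0}(\mathcal{A})$ is a scheme, finite over $X_0$ and so projective over $k$, integral, with $k(X') = L$. It is $p$-closed: if $a \in L$ and $a^p \in A_0^{\perf} \cap L$, then $a^{p^{n+1}} \in A_0$ for some $n$, so $a \in A_0^{\perf} \cap L$. Finally $(X')_{\perf} = X$: the inclusions $A_0 \subseteq A_0^{\perf} \cap L \subseteq A_0^{\perf}$ give $A_0^{\perf} \subseteq (A_0^{\perf} \cap L)^{\perf} \subseteq (A_0^{\perf})^{\perf} = A_0^{\perf}$, so the perfection of $\Spec(A_0^{\perf} \cap L)$ is $\Spec A_0^{\perf}$; equivalently $X' \to X_0$ is a finite surjective radicial morphism (radicial because $p$-th power roots are unique in characteristic $p$), hence a universal homeomorphism which becomes an isomorphism after perfection.

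\textbf{Uniqueness.} Let $X''$ be any $p$-closed projective $k$-scheme with $k(X'') = L$ and $(X'')_{\perf} = X$, and cover it by affine opens $\Spec B$ with $B$ a $p$-closed domain and $\Frac(B) = L$. Since $(X'')_{\perf} = X$, over $\Spec B$ we have $B^{\perf} = \mathcal{O}_X$, so $B \subseteq \mathcal{O}_X \cap L$ there; conversely, if $a \in \mathcal{O}_X \cap L$ then $a^{p^n} \in B$ for some $n$, and applying $p$-closedness of $B$ repeatedly to $a^{p^n} = (a^{p^{n-1}})^p, \dots$ shows $a \in B$. Thus $\mathcal{O}_{X''} = \mathcal{O}_X \cap \underline{L}$ as subsheaves of $k(X)$ on $|X|$, and the same computation identifies $\mathcal{O}_{X'}$ with $\mathcal{O}_X \cap \underline{L}$; hence $X' \cong X''$. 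The one step that needs genuine care is the Frobenius-twist reduction to the case $k(X_0) \subseteq L$ — one must check that $X_0^{(p^m)}$ still deperfects $X$ with function field $k(X_0)^{p^m}$ inside $k(X)$, so that afterwards $L$ is a finite extension of $k(X_0)$ and the integral closure used in the construction is module-finite; everything else is bookkeeping with the chain $\mathcal{O}_{X_0} \subseteq \mathcal{O}_X \cap \underline{L} \subseteq \mathcal{O}_X$ of subsheaves of $k(X)$.
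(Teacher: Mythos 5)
Your construction of $X'$ is the same as the paper's: define $\mathcal{O}_{X'}(U) = \mathcal{O}_X(U) \cap L$ inside the constant sheaf $k(X)$. What differs is how the two arguments then establish that this ringed space is a projective $k$-scheme, and how the auxiliary facts are handled. The paper delegates most of the work to Zhu's Proposition A.15 (which supplies the scheme structure, finite-type-ness, $p$-closure, and uniqueness for this intersection construction) and then proves projectivity separately by descending a power of the ample line bundle on $X$ along the natural map $X \rightarrow X'$, invoking Bhatt--Scholze to conclude ampleness. You instead make the whole thing self-contained: you first arrange, via a Frobenius-twist reduction, a deperfection $X_0$ with $k(X_0) \subseteq L$, so that $L/k(X_0)$ is a finite purely inseparable extension; then you observe that $\mathcal{A} = \mathcal{O}_X \cap \underline{L}$ restricted to an affine chart $\Spec A_0$ of $X_0$ sits inside the integral closure of $A_0$ in $L$, which is module-finite since $A_0$ is a finitely generated $k$-algebra. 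This shows $\mathcal{A}$ is coherent, so $X' = \underline{\Spec}_{X_0}(\mathcal{A})$ is finite over $X_0$, hence projective over $k$ for free, and $(X')_{\perf} = X$ and $p$-closure follow by direct ring computations. Your uniqueness argument (any $p$-closed deperfection with function field $L$ must have structure sheaf equal to $\mathcal{O}_X \cap \underline{L}$) is the same idea as in Zhu's proof, just spelled out. Your route buys a more elementary and self-contained proof that avoids the ample-line-bundle descent and the external citation; the paper's route is shorter on the page because it can cite Zhu. Both are correct. The one place that warranted real care — that the $m$-fold Frobenius image of $X_0$ is an integral projective $k$-scheme with function field $k(X_0)^{p^m}$ still perfecting to $X$ — you flag explicitly, and it checks out (locally it is $\Spec A_0^{p^m} \hookrightarrow \Spec A_0$, a finite radicial surjection becoming an isomorphism after perfection, and $A_0^{p^m}$ is a $k$-algebra because $k$ is perfect).
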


\begin{proof}
The scheme $X'$ is the ringed space with underlying topological space the same as $X$ and the sheaf of rings
$\mathcal{O}_{X'}(U) = \{f \in \mathcal{O}_X(U) \: : \: f \in L\}.$ By the proof of \cite[Proposition A.15]{Zhu:Mixed}, $X'$ is the unique $p$-closed scheme such that $k(X') = L$ and $(X')_{\perf} = X$. It remains to see that $X'$ projective. By assumption, $X$ has an ample line bundle $\mathcal{L}$. Because $X'$ is of finite-type, some power of $\mathcal{L}$ descends along the natural map $X \rightarrow X'$. By \cite[Lemma 3.6]{BhattScholze:Projectivity}, the line bundle on $X'$ is ample.
\end{proof}

In \cite[\S B.2]{Zhu:Mixed}, Zhu constructed canonical deperfections of Witt vector Schubert schemes when $\mathcal{G}$ is a split reductive group, and conjectured that these are normal and Cohen-Macaulay. Canonical deperfections for general parahorics were constructed in \cite{AGLR:Local}. In \cite[Theorem 1.10]{AGLR:Local}, it was shown that canonical deperfections occurring in the $\mu$-admissible locus of a minuscule cocharacter $\mu$ (with a minor technical assumption when $p=2$) satisfy Zhu's conjecture. We now show that \emph{every} $p$-closed deperfection of a perfect Schubert scheme is normal (this result is also implicit in \cite{AGLR:Local}).

\begin{lem} \label{Aperf.Normal}
Let $A$ be a domain of characteristic $p$ that is $p$-closed. Then if $A_{\perf}$ is normal, so is $A$.
\end{lem}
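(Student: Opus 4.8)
The plan is to use the standard criterion that a Noetherian domain is normal if and only if it is $R_1$ (regular in codimension one) and $S_2$ (Serre's second condition). Since normality is local, we may assume $A$ is a local domain with fraction field $k(A)$, and we know $A_{\perf} = \colim(A \xrightarrow{\Frob} A \xrightarrow{\Frob} \cdots)$ is normal. First I would check $S_2$: the ring map $A \to A_{\perf}$ is faithfully flat (it is a filtered colimit of the Frobenius maps, each of which is flat because $A$ — being $p$-closed, hence weakly normal, and a localization of a finite-type $k$-algebra in our application — is reduced and excellent, so Frobenius is flat by Kunz's theorem exactly when $A$ is regular... which is \emph{not} what we want). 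So the flatness route needs care; instead I would argue more directly.

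The cleaner approach: since $A$ and $A_{\perf}$ have the same underlying topological space on $\Spec$ and the same fraction field, and $A \subset A_{\perf} \subset k(A)$, normality of $A$ is equivalent to $A$ being integrally closed in $k(A)$. So let $x \in k(A)$ be integral over $A$; then $x$ is integral over $A_{\perf}$, hence $x \in A_{\perf}$ by normality of $A_{\perf}$. This means $x^{p^n} \in A$ for some $n \geq 0$. Now I would run a descending induction on $n$ using $p$-closedness: if $x^{p^n} \in A$ with $n \geq 1$, set $y = x^{p^{n-1}} \in k(A)$; then $y^p = x^{p^n} \in A$, so by $p$-closedness $y \in A$, i.e. $x^{p^{n-1}} \in A$. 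Iterating, $x \in A$. Hence $A$ is integrally closed in its fraction field, i.e. normal.

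The one gap to fill is the very first reduction — that integral closedness of $A$ in $k(A)$ together with the hypotheses forces the element $x$ to lie in $A_{\perf}$, and that $A_{\perf}$ really does have fraction field $k(A)$ rather than some inseparable extension. This is immediate: $\Frac(A_{\perf})$ is the perfection of $k(A)$ inside a fixed algebraic closure, but in our setting $L := k(A)$ is \emph{given} and $A_{\perf}$ is constructed with $\Frac(A_{\perf}) = k(A)$ in Proposition \ref{prop-deperfection-construction}'s notation — more intrinsically, $A_{\perf} = \colim \Frob$ sits inside $k(A)^{\perf}$, and an element of $k(A)$ integral over the normal ring $A_{\perf}$ lies in $A_{\perf} \cap k(A) = A_{\perf}$ provided $A_{\perf} \subseteq k(A)$, which holds because each transition map $A \to A$ is the $p$-power map realized inside $k(A)$ only after we identify $A_{\perf}$ as a subring of $k(A)$ via the $p$-closedness; concretely, $p$-closedness of $A$ gives $A_{\perf} \subseteq A$ (an element $a^{1/p^n} \in k(A)$ with $a \in A$ lies in $A$ by the same descending induction), so in fact $A_{\perf} = A$ as subrings of $k(A)$ and normality of $A$ is literally normality of $A_{\perf}$.

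I expect the main (only) subtlety to be this identification $A_{\perf} = A$ inside $k(A)$: one must be careful that $A_{\perf}$ is being regarded as a subring of $k(A)$ via the canonical map $A \to A_{\perf}$ and the hypothesis that its perfection-as-an-abstract-ring equals $k(A)$'s relevant subring; once the $p$-closedness collapses the colimit, there is nothing left to prove. So the proof reduces to the two-line observation that $p$-closed domains satisfy $A_{\perf} = A$, making the lemma essentially a tautology; I would state it in that form for clarity, perhaps remarking that this is why $p$-closedness (equivalently weak normality, by the cited footnote) is the right hypothesis.
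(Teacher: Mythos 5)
Your ``cleaner approach'' paragraph is essentially the paper's proof: take $x \in \Frac(A)$ integral over $A$, observe it is integral over $A_{\perf}$ and hence lies in $A_{\perf}$ by normality, then descend to $A$ using $p$-closedness. (The paper states this more tersely; the descending induction you spell out is implicit there, since $p$-closedness only directly addresses $p$-th roots, not $p^n$-th roots.) That part is correct and matches.

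However, the third and fourth paragraphs contain a genuine error that you should not leave in: it is \emph{false} that a $p$-closed domain satisfies $A_{\perf} = A$, and false that $A_{\perf} \subset \Frac(A)$ or that $\Frac(A_{\perf}) = \Frac(A)$. The perfection $A_{\perf} = \colim(A \xrightarrow{\Frob} A \xrightarrow{\Frob} \cdots)$ is the ring of all $p^n$-th roots of elements of $A$ inside an algebraic closure of $\Frac(A)$, and these roots generally do \emph{not} lie in $\Frac(A)$. Concretely, $A = \mathbb{F}_p[t]$ is $p$-closed (it is normal), yet $A_{\perf} = \mathbb{F}_p[t^{1/p^\infty}] \supsetneq A$, and $\Frac(A_{\perf}) = \mathbb{F}_p(t)^{1/p^\infty} \supsetneq \mathbb{F}_p(t)$. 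What $p$-closedness actually gives you (via the iteration you wrote) is the weaker and correct statement $A_{\perf} \cap \Frac(A) = A$, which is exactly what the proof needs. The ``gap'' you worried about in the first reduction was not a gap: from $A \subset A_{\perf}$ one gets $\Frac(A) \subset \Frac(A_{\perf})$, so an element of $\Frac(A)$ integral over $A_{\perf}$ lands in $A_{\perf}$ by normality of $A_{\perf}$ in its own fraction field, with no need for any containment in the other direction. The lemma is not a tautology; drop the claim $A_{\perf} = A$ and the argument in your second paragraph stands on its own.
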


\begin{proof}
Let $f(x) \in A[x]$ be a monic polynomial and let $a \in \Frac(A)$ be an element such that $f(a) = 0.$ Then $a \in A_{\perf}$ by normality, so $a \in A$ because $A$ is $p$-closed.
\end{proof}

\begin{lem} \label{lem-perf-normal}
Every perfect Schubert scheme $\Fl_{w}(\mathbf{f}', \mathbf{f})$ is normal.
\end{lem}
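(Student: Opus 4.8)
The plan is to deduce normality of the perfect Schubert scheme from the Demazure resolution. First I would reduce to the case $\mathbf{f}'=\mathbf{a}$: by Lemma \ref{lem-closure} the perfect Schubert schemes $\Fl_w(\mathbf{f}',\mathbf{f})$ and $\Fl_{{}_{\mathbf{f}'}w^{\mathbf{f}}}(\mathbf{a},\mathbf{f})$ have the same dimension $\ell({}_{\mathbf{f}'}w^{\mathbf{f}})$, and since the former is an irreducible closed subscheme of $\Fl_{\mathcal{G}}$ containing the latter (itself irreducible of the same dimension), while perfect schemes are reduced, the two coincide. So it suffices to prove that $\Fl_w:=\Fl_w(\mathbf{a},\mathbf{f})$ is normal, where $w$ has minimal length in $wW_{\mathbf{f}}$.

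Next I would reformulate normality in a form suited to perfections. For any integral $k$-scheme $Y$ of finite type with normalization $Y^{\nu}\to Y$, raising an integral dependence relation to a sufficiently large $p$-th power shows that $\mathcal{O}_{(Y^{\nu})_{\perf}}$ is the integral closure of $\mathcal{O}_{Y_{\perf}}$ in its fraction field; hence $Y_{\perf}$ is normal if and only if $(Y^{\nu})_{\perf}\to Y_{\perf}$ is an isomorphism, and---since perfection is insensitive to universal homeomorphisms, while conversely a finite birational morphism that becomes an isomorphism after perfection is a universal homeomorphism---this holds if and only if $Y$ is geometrically unibranch. Applying this to a deperfection of $\Fl_w$ (e.g.\ the $p$-closed one of Proposition \ref{prop-deperfection-construction}), it therefore suffices to show that $\Fl_w$ is geometrically unibranch, equivalently that the normalization $\nu\colon\Fl_w^{\nu}\to\Fl_w$---a finite birational morphism of perfect schemes---is a universal homeomorphism.

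For this I would use the Demazure resolution $\pi_{\dot{w}}\colon D_{\dot{w}}\to\Fl_w$ of \eqref{eqn-Demazure-def} attached to a reduced word $\dot{w}$: it is proper, restricts to an isomorphism over the open Schubert cell (so it is birational), and $D_{\dot{w}}$ is a tower of perfected $\mathbb{P}^1$-bundles over $\Spec k$, hence the perfection of a smooth projective $k$-scheme and in particular a normal perfect scheme. Since $D_{\dot{w}}$ is normal, $\pi_{\dot{w},*}\mathcal{O}_{D_{\dot{w}}}$ is integrally closed in $k(\Fl_w)$, so the Stein factorization of $\pi_{\dot{w}}$ takes the form $D_{\dot{w}}\to\Fl_w^{\nu}\xrightarrow{\nu}\Fl_w$; consequently $\pi_{\dot{w}}^{-1}(x)$ has exactly $\#\nu^{-1}(x)$ connected components for each $x$, and $\nu$ is bijective precisely when $\pi_{\dot{w}}$ has connected fibres. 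I would prove the latter by the usual induction on $\ell(w)$ via the $\mathbb{P}^1$-bundle tower defining $D_{\dot{w}}$ and the Bruhat-decomposition bookkeeping of which subexpressions of $\dot{w}$ lie over a given point---an argument that is characteristic-free and uses only loop-group combinatorics identical in the Witt vector and power series cases. Finally, to pass from ``unibranch'' to ``geometrically unibranch'', I would note that over the generic point of a codimension-one cell $\Fl_v^{\circ}$, $v\lessdot w$, the fibre of $\pi_{\dot{w}}$ is a single reduced point, so $\Fl_w$ is smooth in codimension one and $\nu$ is an isomorphism there; together with bijectivity this forces $\nu$ to be a universal homeomorphism, and the lemma follows.

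The hard part is the connectedness of the fibres of $\pi_{\dot{w}}$---equivalently, that $\Fl_w$ is unibranch; everything else is formal. This is exactly the ingredient that makes the classical normality of affine Schubert varieties nontrivial, but the advantage of working with perfections is that one needs only this topological statement together with smoothness in codimension one, and not the stronger $R\pi_{\dot{w},*}\mathcal{O}_{D_{\dot{w}}}=\mathcal{O}_{\Fl_w}$---which would also give Cohen--Macaulayness and which for Witt vector Schubert schemes is not known. Alternatively, one may sidestep the Demazure argument and invoke the existence of a normal deperfection of $\Fl_w$---a classical affine Schubert variety in equal characteristic, and implicit in \cite{AGLR:Local} in mixed characteristic---so that $\Fl_w$ is normal as the perfection of a normal scheme.
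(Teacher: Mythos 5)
Your route is genuinely different from the paper's. The paper, following \cite[Proposition 3.7]{AGLR:Local}, deduces normality directly from $\pi_{\dot w,*}(\mathcal{O}_{D_{\dot w}}) \cong \mathcal{O}_{\Fl_w}$ (proved for the \emph{perfect} schemes in Lemma \ref{lem-Demazure.rational}), the normality of $D_{\dot w}$, and the basic fact that sections of a normal scheme form a normal ring. You instead reformulate normality of the perfection as geometric unibranchness of a deperfection, and then argue via connectedness of Demazure fibres. Your reformulation is correct, and the underlying reduction to topological data is an appealing way to see why the Cohen--Macaulay questions (which are genuinely open for Witt vector deperfections) are irrelevant to normality of the perfection.

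That said, two points need fixing. First, your closing remark frames $R\pi_{\dot w,*}\mathcal{O}_{D_{\dot w}}=\mathcal{O}_{\Fl_w}$ as ``not known''; this conflates the deperfected statement (open, and this is what would give CM deperfections) with the statement for perfect schemes, which the paper does prove and use. Second, and more substantively, the passage ``together with bijectivity this forces $\nu$ to be a universal homeomorphism'' is not justified as written. Bijectivity of $\nu$ plus an isomorphism in codimension one does not by itself give purely inseparable residue extensions at deeper non-closed points. The clean way to finish is to observe that what you actually extract from the Demazure tower is \emph{geometric} connectedness of the fibres of $\pi_{\dot w}$ (equivalently, of the Stein factorization $D_{\dot w} \to Y^\nu \to Y$), and this is exactly universal injectivity of $\nu$ at all points; then the codimension-one discussion is unnecessary. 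With that correction, and keeping in mind that the ``sidestep'' you mention via a normal deperfection in equal characteristic should invoke the \emph{seminormalization} of the affine Schubert variety (the variety itself can fail to be normal by \cite{HainesLorencoRicharz:Remaining}, though this does not affect the perfection), your argument goes through.
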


\begin{proof}
This is stated in \cite[Proposition 3.7]{AGLR:Local} when $\mathbf{f} = \mathbf{f}'$ and $F$ has characteristic zero, but the proof applies verbatim to the general case. The key ingredients are the fact that the Demazure resolution $D_{\dot{w}}$ \eqref{eqn-Demazure-def} is normal, satisfies $\pi_{\dot{w}, *}(\mathcal{O}_{D_{\dot{w}}}) \cong \mathcal{O}_{\Fl_{w}(\mathbf{f}', \mathbf{f})}$, and the fact that the ring of global functions on a normal scheme is normal \cite[Tag 0358]{stacks-project}.
\end{proof}

\begin{theorem}
Every deperfection\footnote{We only consider $p$-closed deperfections in this paper, so we shall omit the adjective ``$p$-closed" from now on.} of $\Fl_{w}(\mathbf{f}', \mathbf{f})$ in the sense of Proposition \ref{prop-deperfection-construction} is normal.
\end{theorem}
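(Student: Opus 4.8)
The plan is to deduce this immediately from Lemma \ref{lem-perf-normal} together with Lemma \ref{Aperf.Normal}. Let $X'$ be a deperfection of $\Fl_{w}(\mathbf{f}', \mathbf{f})$ in the sense of Proposition \ref{prop-deperfection-construction}, so that $X'$ is a $p$-closed projective $k$-scheme with $(X')_{\perf} = \Fl_{w}(\mathbf{f}', \mathbf{f})$. First I would record that $X'$ is integral. Its underlying topological space coincides with that of $\Fl_{w}(\mathbf{f}', \mathbf{f})$, which is irreducible since it is the closure of the single perfect Schubert cell $\Fl_{w}^\circ(\mathbf{f}', \mathbf{f})$, itself the image of an orbit of the connected group $L^+\mathcal{G}_{\mathbf{f}'}$; and by construction $\mathcal{O}_{X'}$ is a subsheaf of $\mathcal{O}_{\Fl_{w}(\mathbf{f}', \mathbf{f})}$, which is reduced by Lemma \ref{lem-perf-normal}, so $X'$ is reduced. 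Hence $X'$ is integral, and in particular it admits a covering by open affines $U = \Spec A$ with $A$ a $p$-closed domain of characteristic $p$, by the definition of $p$-closed scheme recalled before Proposition \ref{prop-deperfection-construction}.

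Next, since perfection has the same underlying topological space and commutes with restriction to open subschemes and with $\Spec$ (on rings it is the colimit along Frobenius), the perfection of $U$ is the open affine $\Spec(A_{\perf})$ of $\Fl_{w}(\mathbf{f}', \mathbf{f})$. By Lemma \ref{lem-perf-normal} and the fact that normality is local on the base, $A_{\perf}$ is a normal domain. Lemma \ref{Aperf.Normal} then applies and yields that $A$ is normal. As this holds on an open cover of $X'$, we conclude that $X'$ is normal.

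There is essentially no obstacle beyond this bookkeeping: all the geometric content sits in Lemma \ref{lem-perf-normal} (normality of perfect Schubert schemes, which rests on the normality of the Demazure resolution $D_{\dot{w}}$ and the identity $\pi_{\dot{w}, *}\mathcal{O}_{D_{\dot{w}}} \cong \mathcal{O}_{\Fl_{w}(\mathbf{f}', \mathbf{f})}$) and in the elementary Lemma \ref{Aperf.Normal}. The only points needing a line of care are the verification that a deperfection is genuinely integral, so that $A$ is a domain and Lemma \ref{Aperf.Normal} is applicable, and the compatibility of perfection with passage to affine opens; both are immediate from the constructions in \S\ref{sect-shubert-schemes} and the definition of perfection.
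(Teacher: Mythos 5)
Your proof is correct and takes essentially the same approach as the paper, which simply cites Lemmas \ref{Aperf.Normal} and \ref{lem-perf-normal} and notes that deperfections are $p$-closed by construction; you have merely filled in the bookkeeping (integrality of $X'$, compatibility of perfection with affine opens) that the paper leaves implicit.
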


\begin{proof}
Since every deperfection is $p$-closed, this follows from Lemmas \ref{Aperf.Normal} and \ref{lem-perf-normal}.
\end{proof}

\begin{remark}
If $F$ has characteristic $p$, $\Fl_{w}(\mathbf{f}', \mathbf{f})$ is isomorphic to the perfection of a canonical projective variety $\Fl_{w}^{\can}(\mathbf{f}', \mathbf{f})$. To construct $\Fl_{w}^{\can}(\mathbf{f}', \mathbf{f})$, form the affine flag variety as the \'etale quotient $(L_pG/L_p^+\mathcal{G})_{\et}$ on all $k$-algebras. This is an ind-projective $k$-scheme by \cite{PappasRapoport:Twisted}, and $\Fl_{w}^{\can}(\mathbf{f}', \mathbf{f})$ is the scheme-theoretic image of the $L^+_p\mathcal{G}$-orbit associated to $w$. If $p \nmid |\pi_1(G_{\text{der}})|$ (and the relative root system is reduced if $p =2$), 
 $\Fl_{w}^{\can}(\mathbf{f}', \mathbf{f})$ is normal by
 \cite[Theorem 4.23]{FHLR:Local} (see also 
 \cite[Theorem 8]{Faltings:Loop} and \cite[Theorem 0.3]{PappasRapoport:Twisted}). In general $\Fl_{w}^{\can}(\mathbf{f}', \mathbf{f})$ need not be normal by \cite[Theorem 1.1]{HainesLorencoRicharz:Remaining}, but the seminormalization of $\Fl_{w}^{\can}(\mathbf{f}', \mathbf{f})$ is normal by \cite[Theorem 4.1]{FHLR:Local}. The seminormalization  \cite[Tag 0EUK]{stacks-project} is the universal scheme mapping universally homeomorphically onto $\Fl_{w}^{\can}(\mathbf{f}', \mathbf{f})$ with the same residue fields. The deperfection of $\Fl_{w}(\mathbf{f}', \mathbf{f})$ associated by Proposition \ref{prop-deperfection-construction} to the function field of $\Fl_{w}^{\can}(\mathbf{f}', \mathbf{f})$ is the seminormalization of $\Fl_{w}^{\can}(\mathbf{f}', \mathbf{f})$.
\end{remark}

\subsection{Demazure resolutions and convolution Grassmannians}
\subsubsection{Demazure resolutions} \label{sect-Demazure}
Let $w \in W$ and write $w = w_{\af} \tau$ for $w_{\af} \in W_{\af}$ and $\tau \in \pi_1(G)_I$. Choose a (not necessarily reduced) decomposition $\dot{w} = s_1 \cdots s_n$ of $w_{\af}$ as a product of simple reflections along the walls of $\mathbf{a}$. For each $i$ let $\mathcal{P}_i$ be the minimal parahoric group scheme attached to $s_i$, so that $\mathcal{P}_i(\mathcal{O}_F) = \mathcal{I}(\mathcal{O}_F) \sqcup \mathcal{I}(\mathcal{O}_F) s_i \mathcal{I}(\mathcal{O}_F)$ for any lift of $s_i$ to $N_G(S)(F)$. 
The \textit{perfect Demazure scheme associated to} $\dot{w}$ is
\begin{equation} \label{eqn-Demazure-def}
    D_{\dot{w}} := L^+\mathcal{P}_1 \times^{L^+ \mathcal{I}} \cdots \times^{L^+ \mathcal{I}} L^+\mathcal{P}_n / L^+ \mathcal{I}.
\end{equation} Here we have taken \'etale quotient of $\prod_{i=1}^n L^+\mathcal{P}_i$ by the right action of $\prod_{i=1}^n L^+\mathcal{I}$ given by
$$(i_1, \cdots, i_n) \cdot (p_1, \cdots, p_n) = (p_1i_1, i_1^{-1} p_2 i_2, \cdots, i_{n-1}^{-1}p_n i_n ).$$ Since any lift of $\tau$ to $N_G(S)(F)$ normalizes $L^+\mathcal{I}$, the multiplication map $(p_1, \cdots, p_n) \mapsto p_1 \cdots p_n \tau$ induces a map $D_{\dot{w}} \rightarrow \Fl_{\mathcal{G}_{\mathbf{f}}}$. We define $\pi_{\dot{w}}(D_{\dot{w}})$ to be the image of this map, which is isomorphic to $\Fl_{w'}(\mathbf{a}, \mathbf{f})$ for some $w' \in W$. Let
\begin{equation}\label{eqn-Dem-res}
\pi_{\dot{w}} \colon D_{\dot{w}} \rightarrow \pi_{\dot{w}}(D_{\dot{w}})
\end{equation}
be the resulting surjective map.

The following result is well-known in equal characteristic \cite[Lemma 9]{Faltings:Loop}, \cite[Proposition 9.7]{PappasRapoport:Twisted} and in mixed-characteristic \cite[Proposition 3.8]{XuhuaZhou:ADLV}, \cite[Proposition 3.7]{AGLR:Local}. Below we assemble the arguments in one place for the reader's convenience.

\begin{lem}\label{lem-Demazure.rational}
(1) The perfect Demazure scheme $D_{\dot{w}}$ is the perfection of a smooth, projective $k$-scheme. The multiplication map $\pi_{\dot{w}}$ satisfies $$R\pi_{\dot{w}, *}(\mathcal{O}_{D_{\dot{w}}}) \cong \mathcal{O}_{\pi_{\dot{w}}(D_{\dot{w}})}[0] \quad \text{and} \quad R\pi_{\dot{w}, !}(\mathbb{F}_p) \cong \mathbb{F}_p[0].$$ 

\noindent (2) Furthermore, if $w \in {}_{\mathbf{f}'} W ^\mathbf{f}$ and $\dot{w}$ is a reduced expression, then $\pi_{\dot{w}}(D_{\dot{w}})= \Fl_{w}(\mathbf{f'}, \mathbf{f})$ and $\pi_{\dot{w}}$ is an isomorphism over $\Fl_{w}^\circ(\mathbf{f}', \mathbf{f})$.
\end{lem}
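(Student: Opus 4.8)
The plan is to establish the three assertions in part (1) in turn and then deduce part (2); the only input that is not formal is the coherent vanishing $R\pi_{\dot{w},*}(\mathcal{O}_{D_{\dot{w}}}) \cong \mathcal{O}_{\pi_{\dot{w}}(D_{\dot{w}})}[0]$, which is classical -- this is where the ``assembling of known arguments'' takes place -- and from it the mod $p$ \'etale statement will follow by Artin--Schreier theory. For the geometry of $D_{\dot{w}}$ I would induct on $n = \ell(\dot{w})$, the case $n = 0$ being a point: for $n \geq 1$, writing $\dot{w}' = s_2 \cdots s_n$, projection onto the first factor exhibits $D_{\dot{w}} \cong L^+\mathcal{P}_1 \times^{L^+\mathcal{I}} D_{\dot{w}'}$ as a fibration over $L^+\mathcal{P}_1/L^+\mathcal{I}$ with fibre $D_{\dot{w}'}$, and since $\mathcal{P}_1$ has semisimple rank one the base is the perfection of $\mathbb{P}^1_k$; at a sufficiently high Greenberg level the construction \eqref{eqn-Demazure-def} is thus an iterated $\mathbb{P}^1$-bundle over $k$ (using the inductively-constructed smooth projective deperfection of $D_{\dot{w}'}$, cf.\ \cite{Zhu:Mixed, AGLR:Local}), which is a smooth projective deperfection of $D_{\dot{w}}$.

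For the coherent vanishing I would factor $\pi_{\dot{w}} = m_1 \circ \rho$, where $\rho \colon D_{\dot{w}} \to L^+\mathcal{P}_1 \times^{L^+\mathcal{I}} \pi_{\dot{w}'}(D_{\dot{w}'})$ is induced by $\pi_{\dot{w}'}$ and $m_1$ is the one-step convolution map to $\Fl_{\mathcal{G}_{\mathbf{f}}}$ (well-defined because $\pi_{\dot{w}'}(D_{\dot{w}'}) = \Fl_{w'}(\mathbf{a},\mathbf{f})$ is $L^+\mathcal{I}$-stable, Lemma \ref{lem-closure}). The inductive hypothesis $R\pi_{\dot{w}',*}\mathcal{O} \cong \mathcal{O}$ together with flat base change along the fibration $L^+\mathcal{P}_1 \times^{L^+\mathcal{I}}(-) \to L^+\mathcal{P}_1/L^+\mathcal{I}$ give $R\rho_*\mathcal{O} \cong \mathcal{O}$, reducing us to $m_1$. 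If $s_1$ is a left descent of $w'$ then $m_1$ is a $\mathbb{P}^1$-bundle and $R\Gamma(\mathbb{P}^1,\mathcal{O}) = k$ suffices; otherwise $m_1$ is proper and birational onto the normal (Lemma \ref{lem-perf-normal}) Schubert scheme $\pi_{\dot{w}}(D_{\dot{w}})$, and one derives $m_{1,*}\mathcal{O} = \mathcal{O}$ and $R^{>0}m_{1,*}\mathcal{O} = 0$ from the compatible Frobenius splitting of Bott--Samelson and Schubert schemes exactly as in \cite{Faltings:Loop, PappasRapoport:Twisted, XuhuaZhou:ADLV, AGLR:Local}. I expect this Frobenius-splitting input -- the only place where positive-characteristic methods beyond elementary geometry enter -- to be the main obstacle, though it is by now standard.

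Granting the coherent statement, the mod $p$ \'etale vanishing is formal: apply $R\pi_{\dot{w},*}$ to the Artin--Schreier sequence $0 \to \underline{\mathbb{F}}_p \to \mathbb{G}_a \xrightarrow{\,\mathrm{Frob}-1\,} \mathbb{G}_a \to 0$ on the small \'etale site of $D_{\dot{w}}$, which is unchanged by perfection and on which the cohomology of the quasi-coherent sheaf $\mathbb{G}_a = \mathcal{O}$ agrees with Zariski cohomology; hence $R\pi_{\dot{w},*}\mathbb{G}_a \cong R\pi_{\dot{w},*}\mathcal{O}_{D_{\dot{w}}} \cong \mathcal{O}_{\pi_{\dot{w}}(D_{\dot{w}})}$, and the resulting triangle identifies $R\pi_{\dot{w},*}\underline{\mathbb{F}}_p$ with the fibre of $\mathrm{Frob}-1 \colon \mathbb{G}_a \to \mathbb{G}_a$ on $\pi_{\dot{w}}(D_{\dot{w}})$, i.e.\ with $\underline{\mathbb{F}}_p[0]$, since $\mathrm{Frob}-1$ is a surjective \'etale isogeny with kernel $\underline{\mathbb{F}}_p$. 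As $\pi_{\dot{w}}$ is proper (the perfection of a morphism of projective $k$-schemes), $R\pi_{\dot{w},!} = R\pi_{\dot{w},*}$, so $R\pi_{\dot{w},!}(\mathbb{F}_p) \cong \mathbb{F}_p[0]$.

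For part (2), when $\dot{w}$ is reduced the relation $L^+\mathcal{I}\,s_1\,L^+\mathcal{I}\cdots s_n\,L^+\mathcal{I} = L^+\mathcal{I}\,w_{\mathrm{af}}\,L^+\mathcal{I}$ shows, after twisting by $\tau$, projecting to $\Fl_{\mathcal{G}_{\mathbf{f}}}$ and taking closures, that $\pi_{\dot{w}}(D_{\dot{w}}) = \overline{L^+\mathcal{I}\,\tilde{w}\,e}$; since $w \in {}_{\mathbf{f}'}W^{\mathbf{f}}$ is $\mathbf{f}$-minimal this Iwahori orbit has dimension $\ell(w) = \dim\Fl_{w}(\mathbf{f}',\mathbf{f})$ (Lemma \ref{lem-closure}), so $\overline{L^+\mathcal{I}\,\tilde{w}\,e} = \Fl_{w}(\mathbf{f}',\mathbf{f})$, which is the first claim. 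Finally, the unique reduced-word factorization identifies the open Bott--Samelson cell with the open Schubert cell via an isomorphism, so $\pi_{\dot{w}}$ is an isomorphism over $\Fl_{w}^\circ(\mathbf{f}',\mathbf{f})$; cf.\ \cite[Proposition 2.8]{Richarz:Schubert}.
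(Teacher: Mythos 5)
Your overall strategy---inducting on the length of $\dot w$, factoring $\pi_{\dot w}$ through the one-step convolution $m_1$, pushing the inductive hypothesis through the $(\mathbb{P}^1)_{\perf}$-fibration by smooth/flat base change, and then deducing the $\mathbb{F}_p$-statement from the coherent one via Artin--Schreier---is the same as the paper's, and your treatment of part (2) also matches. The gap is in your case analysis for $m_1$.

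You run the induction with the intermediate image $\pi_{\dot w'}(D_{\dot w'}) = \Fl_{w'}(\mathbf a, \mathbf f)$ landing directly in the partial flag variety $\Fl_{\mathcal{G}_{\mathbf f}}$. There the correct dichotomy governing $m_1 \colon L^+\mathcal{P}_1 \times^{L^+\mathcal{I}} \Fl_{w'}(\mathbf a,\mathbf f) \to \Fl_{\mathcal{G}_{\mathbf f}}$ is \emph{not} whether $s_1$ is a left descent of $w'$, but whether $\Fl_{w'}(\mathbf a, \mathbf f)$ is $L^+\mathcal{P}_1$-stable. The two conditions agree when $\mathbf f = \mathbf a$, but not in general: if $s_1 w' > w'$ and yet $s_1 w' W_{\mathbf f} = w' W_{\mathbf f}$ --- the simplest instance being $w' = e$ and $s_1 \in W_{\mathbf f}$, which \emph{does} occur since $\dot w$ is allowed to be non-reduced --- then $\Fl_{w'}(\mathbf a, \mathbf f)$ is $L^+\mathcal{P}_1$-stable, the image of $m_1$ equals $\Fl_{w'}(\mathbf a, \mathbf f)$, and $m_1$ is a $(\mathbb{P}^1)_{\perf}$-bundle, hence \emph{not} birational. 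Your ``otherwise'' branch (birational onto a normal Schubert scheme, then Frobenius splitting) therefore literally fails in that case, even though the conclusion $Rm_{1,*}\mathcal O \cong \mathcal O[0]$ still holds, by the $\mathbb{P}^1$-bundle argument again. The fix is to replace the descent condition with $L^+\mathcal{P}_1$-stability of $\Fl_{w'}(\mathbf a, \mathbf f)$ and, in the unstable case, to observe directly (as in the paper) that $m_1$ is an isomorphism away from a closed locus and has $(\mathbb{P}^1)_{\perf}$-fibers over it.

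The paper sidesteps this entirely by first establishing the result for $\mathbf f = \mathbf f' = \mathbf a$, where the descent dichotomy \emph{is} correct and the fibers are explicitly points or $(\mathbb P^1)_{\perf}$'s, and then treating general $\mathbf f, \mathbf f'$ by post-composing with the projection $\Fl_{\mathcal I} \to \Fl_{\mathcal{G}_{\mathbf f}}$, whose fibers are unions of $\overline B$-orbit closures in the finite flag variety $\overline P^{\mathrm{red}}/\overline B$; that final push-forward requires a Mayer--Vietoris induction over the reducible fibers, which your one-induction formulation omits. Note also that the paper reduces the whole coherent statement to geometric fibers via \cite[Lemma 6.9]{BhattScholze:Projectivity}, so it never needs to invoke Frobenius splitting for the Bott--Samelson map itself, only the standard cohomology vanishing for finite-dimensional Schubert varieties in the last step; your appeal to ``compatible Frobenius splitting of Bott--Samelson and Schubert schemes'' at the level of $m_1$ is heavier than necessary and, as applied to the possibly singular intermediate $L^+\mathcal{P}_1 \times^{L^+\mathcal{I}} \Fl_{w'}$, needs more justification than a citation.
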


\begin{proof}
By the arguments in \cite[Proposition 8.7]{PappasRapoport:Twisted} (which apply verbatim in mixed characteristic), $L^n_p\mathcal{P}_i/ L_p^n \mathcal{I} \cong \mathbb{P}^1_k$ for $n \gg 0$, and this quotient admits sections Zariski-locally. As in \cite[Proposition 3.5]{XuhuaZhou:ADLV}
this implies that $D_{\dot{w}}$ is the perfection of an iterated $\mathbb{P}^1_k$-bundle which is smooth and projective. By \cite[Lemma 6.9]{BhattScholze:Projectivity}, to show that $R\pi_{\dot{w}, *}(\mathcal{O}_{D_{\dot{w}}}) \cong \mathcal{O}_{\pi_{\dot{w}}(D_{\dot{w}})}[0]$, it suffices to check this on geometric fibers. 

We first suppose $\mathbf{f} = \mathbf{f}' = \mathbf{a}$, and we induct on the length of $\dot{w}$. Let $\dot{v} = s_2 \cdots s_{n} \tau$. Since $\pi_{\dot{v}}(D_{\dot{v}})$ is irreducible and $L^+\mathcal{I}$-stable, it is of the form $\Fl_{v'}(\mathbf{a}, \mathbf{a})$ for some $v' \in W$. As in \cite[Lemma 9]{Faltings:Loop}, by performing the multiplication in two steps $(p_1, \cdots, p_n) \mapsto (p_1, p_2 \cdots p_n \tau) \mapsto p_1 \cdots p_n \tau$, we may factor $\pi_{\dot{w}}$ as
$$
D_{\dot{w}} \longrightarrow  L^+\mathcal{P}_1 \times^{L^+ \mathcal{I}}\Fl_{v'}(\mathbf{a}, \mathbf{a}) \longrightarrow \pi_{\dot{w}}(D_{\dot{w}}).
$$
By induction, pushforward along the first map preserves the structure sheaf. By properties of Tits systems \cite[(1.2.6)]{BruhatTits:I} (especially T 3), if $s_1 v' < v'$, then $\pi_{\dot{w}}(D_{\dot{w}}) = \Fl_{v'}(\mathbf{a}, \mathbf{a})$ and the second map has fibers isomorphic to $(\mathbb{P}^1)_{\perf}$. 
Otherwise, if $s_1 v' > v'$ then $\pi_{\dot{w}}(D_{\dot{w}}) = \Fl_{s_1v'}(\mathbf{a}, \mathbf{a})$. To describe the fibers in this case, consider the union $Z = \cup_{v'' < v'}  \Fl_{v''}(\mathbf{a}, \mathbf{a})$ of those perfect Schubert schemes in $\Fl_{v'}(\mathbf{a}, \mathbf{a})$ for which $s_1 v'' < v''$. Then the second map is an isomorphism away from $Z$ and has fibers isomorphic to $(\mathbb{P}^1)_{\perf}$ over $Z$.  Thus, in either case the result follows since $R\Gamma((\mathbb{P}^1_L)_{\perf}, \mathcal{O}_{(\mathbb{P}^1_L)_{\perf}}) = L[0]$ for any perfect field $L$.

For general $\mathbf{f}, \mathbf{f}'$, we factor $\pi_{\dot{w}}$ as
\begin{equation}
D_{\dot{w}} \longrightarrow  \Fl_{\mathcal{I}} \longrightarrow \Fl_{\mathcal{G}_{\mathbf{f}}},
\end{equation}
where the first map is a Demazure map with $\mathbf{f} = \mathbf{f}' = \mathbf{a}$, and the second map is the quotient. The image of the first map is of the form $\Fl_{w'}(\mathbf{a}, \mathbf{a})$ for some $w' \in W$. Thus, it suffices to show that pushforward along the natural surjection $\pi \colon \Fl_{w'}(\mathbf{a}, \mathbf{a}) \rightarrow \pi_{\dot{w}}({D_{\dot{w}}})$ preserves the structure sheaf. For this, let $\overline{P}^{\red}$ be the maximal reductive quotient of the special fiber of $\mathcal{G}_{\mathbf{f}}$. By \cite[Remark 2.9]{Richarz:Schubert}, the image of the special fiber of $\mathcal{I} \rightarrow \mathcal{G}_{\mathbf{f}}$ in $\overline{P}^{\red}$ is a Borel subgroup $\overline{B}$, and we have
\begin{equation} \label{eqn-flag-variety} L_p^+\mathcal{G}_{\mathbf{f}}/L_p^+\mathcal{I} \cong \overline{P}^{\red} / \overline{B}.
\end{equation}
Furthermore, the maximal $F$-split torus $S$ has a natural $\mathcal{O}_F$-structure whose special fiber $\overline{S}$ is a maximal torus in $\overline{P}^{\red}$. The group $W_{\mathbf{f}}$ identifies with the Weyl group of $(\overline{P}^{\red}, \overline{S})$. Thus, since $\pi$ is $L^+\mathcal{I}$-equivariant, the fibers of $\pi$ are isomorphic to perfections of unions of $\overline{B}$-orbit closures in $\overline{P}^{\red} / \overline{B}$. Such a union is connected, so by normality of $\pi_{\dot{w}}({D_{\dot{w}}})$  this implies that $\pi_{*}(\mathcal{O}_{\Fl_{w'}(\mathbf{a}, \mathbf{a})}) \cong \mathcal{O}_{\pi_{\dot{w}}({D_{\dot{w}}})}$. For the higher direct images, it is well-known that the structure sheaves of $\overline{B}$-orbit closures, i.e, Schubert varieties, have vanishing higher cohomology. (For example, this follows because the Demazure resolution is a rational resolution by an iterated $\mathbb{P}^1$-bundle \cite[Theorem 3.3.4]{BrionKumar:Frobenius}). If our fiber in $\overline{P}^{\red} / \overline{B}$ is not irreducible, we induct on its support in $\overline{P}^{\red} / \overline{B}$. Write an arbitrary union of $\overline{B}$-orbit closures in terms of its irreducible components as $X = X_1 \cup \cdots \cup X_m$. If $Y = X_2 \cup \cdots \cup X_m$, we conclude by induction and the exact sequence\footnote{Unions and intersections of Schubert varieties are reduced by \cite[Theorem 2.3.3]{BrionKumar:Frobenius}, but we do not need this fact when working with perfections.}
$$
0 \longrightarrow \mathcal{O}_{X_1 \cup Y} \longrightarrow \mathcal{O}_{X_1} \oplus \mathcal{O}_{Y} \longrightarrow \mathcal{O}_{X_1 \cap Y} \longrightarrow 0.
$$
This proves that $R\pi_{\dot{w}, *}(\mathcal{O}_{D_{\dot{w}}}) \cong \mathcal{O}_{\pi_{\dot{w}}(D_{\dot{w}})}$ for general $\mathbf{f}, \mathbf{f}'$.\footnote{A different proof for the step from $\mathbf{f} = \mathbf{a}$ to general $\mathbf{f}$ appears in the final part of the proof of \cite[Proposition 3.1 i)]{HainesRicharz:Local}.}

To compute $\pi_{\dot{w}, !}(\mathbb{F}_p)$, we first note that since $\pi_{\dot{w}}$ is the perfection of a proper map, we can replace $\pi_{\dot{w}, !}$ by $\pi_{\dot{w}, *}$. Then we apply $R\pi_{\dot{w}, *}$ to the Artin--Schreier sequence 
$$
0 \longrightarrow \mathbb{F}_p \longrightarrow \mathcal{O}_{D_{\dot{w}}} \xlongrightarrow{\Frob - \Id} \mathcal{O}_{D_{\dot{w}}} \longrightarrow 0
.$$

Finally, the proof of \cite[Proposition 2.8]{Richarz:Schubert} shows that
$$\Fl_{w}^\circ(\mathbf{f}', \mathbf{f}) = \Fl^\circ_{_{\mathbf{f}'} w ^\mathbf{f}}(\mathbf{a}, \mathbf{f}).$$ Thus, if $w \in {}_{\mathbf{f}'} W ^\mathbf{f}$ and $\dot{w}$ is a reduced expression, by tracing through our proof that $R\pi_{\dot{w}, *}(\mathcal{O}_{D_{\dot{w}}}) \cong \mathcal{O}_{\pi_{\dot{w}}(D_{\dot{w}})}$, it follows that $\pi_{\dot{w}}(D_{\dot{w}})= \Fl_{w}(\mathbf{f'}, \mathbf{f})$ and $\pi_{\dot{w}}$ and is an isomorphism over $\Fl_{w}^\circ(\mathbf{f}', \mathbf{f})$.
\end{proof}

\subsubsection{Convolution}
In this subsection we let $\mathcal{G} = \mathcal{G}_{\mathbf{f}}$ and $\Fl_w = \Fl_w(\mathbf{f}, \mathbf{f})$. The convolution diagram is as follows:
\begin{equation} \label{eqn-convolution-diagram}
\Fl_{\mathcal{G}} \times \Fl_{\mathcal{G}} \xlongleftarrow{p} LG \times \Fl_{\mathcal{G}} \xlongrightarrow{q} LG \times^{L^+ \mathcal{G}} \Fl_{\mathcal{G}} \xlongrightarrow{m} \Fl_{\mathcal{G}}.
\end{equation}
Here $p$ is the quotient map on the first factor and the identity on the second, $q$ is the quotient by the diagonal action of $L^+ \mathcal{G}$, and $m$ is the multiplication map. The map $LG \times^{L^+ \mathcal{G}} \Fl_{\mathcal{G}} \rightarrow \Fl_{\mathcal{G}} \times \Fl_{\mathcal{G}}$, $(g_1, g_2) \mapsto (g_1, g_1g_2)$ is an isomorphism, so the convolution Grassmannian $LG \times^{L^+ \mathcal{G}} \Fl_{\mathcal{G}}$ is represented by an increasing union of perfections of projective $k$-schemes.

\begin{lem} \label{lem-conv-points}
If $k$ is a finite field or an algebraic closure thereof and $x \in \Fl_{\mathcal{G}}(k)$,
$$m^{-1}(x) = \{(y, y^{-1}x) \: : \: y \in G(F)/K\} \subset G(F)/K \times G(F)/K.$$
\end{lem}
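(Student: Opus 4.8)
The plan is to reduce the statement to the previously established description of $k$-points of the objects in the convolution diagram, together with the fact that the isomorphism $LG \times^{L^+\mathcal{G}} \Fl_{\mathcal{G}} \xrightarrow{\sim} \Fl_{\mathcal{G}} \times \Fl_{\mathcal{G}}$, $(g_1, g_2) \mapsto (g_1, g_1 g_2)$, intertwines $m$ with the first projection. First I would note that by Lemma \ref{lem-k-points} we have $\Fl_{\mathcal{G}}(k) = G(F)/K$, and since $k$ is local for the \'etale topology (being finite or algebraically closed) the product $(\Fl_{\mathcal{G}} \times \Fl_{\mathcal{G}})(k) = G(F)/K \times G(F)/K$. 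I would then observe that the same vanishing of $H^1(\Gal(\overline{k}/k), L^+\mathcal{G}(\mathcal{O}_{\breve F}))$ used in Lemma \ref{lem-k-points} shows that the twisted product $(LG \times^{L^+\mathcal{G}} \Fl_{\mathcal{G}})(k)$ is computed naively, i.e.\ equals $(LG(k) \times \Fl_{\mathcal{G}}(k))/L^+\mathcal{G}(k)$; alternatively, this is immediate from the cited isomorphism since the target has the expected $k$-points.

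Next, under the isomorphism $(g_1, g_2) \mapsto (g_1, g_1 g_2)$, a $k$-point of $LG \times^{L^+\mathcal{G}} \Fl_{\mathcal{G}}$ represented by a pair $(g_1, g_2) \in LG(k) \times \Fl_{\mathcal{G}}(k)$ maps to the pair $(g_1 K, g_1 g_2) \in G(F)/K \times G(F)/K$, and the composite $m$ sends it to the second coordinate $g_1 g_2 \in G(F)/K$. Therefore, writing $x \in \Fl_{\mathcal{G}}(k) = G(F)/K$, the fiber $m^{-1}(x)(k)$ consists of those pairs $(y, z) \in G(F)/K \times G(F)/K$ with $z = x$, i.e.\ pairs of the form $(y, x)$; translating back through the isomorphism $(y,z) \mapsto (y, y^{-1}z)$, these correspond exactly to the pairs $(y, y^{-1}x)$ with $y \in G(F)/K$. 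Since $m$ is the perfection of a proper map of ind-projective schemes over $k$, the fiber $m^{-1}(x)$ is again (a perfection of) a projective $k$-scheme, and its $k$-points are precisely the set just described; this gives the claimed equality of sets inside $G(F)/K \times G(F)/K$.

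I do not expect any serious obstacle here: the only subtlety is making sure that passing to $k$-points commutes with forming the twisted product and the fiber, which is exactly the Galois-cohomology vanishing already invoked for Lemma \ref{lem-k-points}, so the argument is essentially bookkeeping. If one wants to be careful about whether $y^{-1}x$ is well-defined as an element of $G(F)/K$ — it is not a single element but we mean the coset, and the pair $(y, y^{-1}x)$ is understood with $y \in G(F)/K$ and $y^{-1}x$ meaning $\tilde y^{-1} x$ for any lift $\tilde y \in G(F)$, which is well-defined in $G(F)/K$ — I would spell this out in a sentence, since the group $G(F)$ acts on $G(F)/K$ on the left and so $\tilde y^{-1} \cdot (gK)$ depends only on $\tilde y K$ after we remember we are really looking at the orbit of the pair under the diagonal $L^+\mathcal{G}$-action. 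With that clarification the proof is complete.
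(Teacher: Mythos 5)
Your proof is correct and takes essentially the same route as the paper: both deduce the claim from the isomorphism $LG \times^{L^+\mathcal{G}} \Fl_{\mathcal{G}} \xrightarrow{\sim} \Fl_{\mathcal{G}} \times \Fl_{\mathcal{G}}$ (under which $m$ becomes the second projection) together with Lemma~\ref{lem-k-points}. You simply spell out the bookkeeping that the paper compresses into a one-line citation.
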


\begin{proof}
This follows from the isomorphism $LG \times^{L^+ \mathcal{G}} \Fl_{\mathcal{G}} \rightarrow \Fl_{\mathcal{G}} \times \Fl_{\mathcal{G}}$ and Lemma \ref{lem-k-points}, cf. also \cite[Lemma 5.6.1]{Zhu:Intro}.
\end{proof}

We now define the following finite-dimensional perfect subschemes \eqref{eqn-conv-def} of the convolution Grassmannian. Fix $w_1, w_2 \in W$. Let $n \gg 0 $ be an integer such that $L^+\mathcal{G}$ acts on $\Fl_{w_2}$ through the quotient $L^+ \mathcal{G} \rightarrow L^n\mathcal{G}$. Let $\pi^{(n)} \colon L^{(n)}G \rightarrow \Fl_{\mathcal{G}}$ be the reduction of the $L^+\mathcal{G}$-torsor $LG \rightarrow \Fl_{\mathcal{G}}$ to an $L^n\mathcal{G}$-torsor. Then we define
\begin{equation}\label{eqn-conv-def} \Fl_{w_1} \times^{L^+\mathcal{G}} \Fl_{w_2} : = (\pi^{(n)})^{-1}(\Fl_{w_1}) \times^{L^n\mathcal{G}} \Fl_{w_2}.\end{equation} This is a closed subscheme of $LG \times \Fl_{\mathcal{G}}$, isomorphic to the perfection of a projective $k$-scheme, and independent of $n \gg 0$.

\begin{theorem} \label{thm-convolution}
Let $w_1, w_2 \in {}_{\mathbf{f}} W ^\mathbf{f}$. Fix reduced expressions $\dot{w}_1 = \tau_1 s_1 \cdots s_n$ and $\dot{w}_2 = t_1 \cdots t_m \tau_2$ where the $s_i$, $t_j$ are simple reflections and $\tau_i \in \pi_1(G)_I$. Let $\dot{w} = s_1 \cdots s_n t_1 \cdots t_m \tau_2$.
\begin{enumerate}
    \item The image of the restriction of $m$ to $\Fl_{w_1} \times^{L^+\mathcal{G}} \Fl_{w_2}$ is the left $\tau_1$-translate $\tau_1 \cdot \pi_{\dot{w}}(D_{\dot{w}})$.
    \item The resulting convolution map
$$m \colon \Fl_{w_1} \times^{L^+\mathcal{G}} \Fl_{w_2} \longrightarrow \tau_1 \cdot \pi_{\dot{w}}(D_{\dot{w}})$$ satisfies
$Rm_*(\mathcal{O}_{\Fl_{w_1} \times^{L^+\mathcal{G}} \Fl_{w_2}}) \cong \mathcal{O}_{\tau_1 \cdot \pi_{\dot{w}}(D_{\dot{w}})}[0]$ and $Rm_!(\mathbb{F}_p) \cong \mathbb{F}_p[0]$.
\end{enumerate}
\end{theorem}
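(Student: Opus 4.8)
The plan is to realize the convolution map $m$, up to a left translation by $\tau_1$, as a direct image of the Demazure map $\pi_{\dot{w}} \colon D_{\dot{w}} \to \pi_{\dot{w}}(D_{\dot{w}})$ of Lemma~\ref{lem-Demazure.rational}, by exhibiting a common resolution of $\Fl_{w_1} \times^{L^+\mathcal{G}} \Fl_{w_2}$ and $\pi_{\dot{w}}(D_{\dot{w}})$. First I would write $\dot{v}_1 = s_1 \cdots s_n$ for the affine part of $\dot{w}_1$ and introduce the partial Bott--Samelson space $\widetilde{D}_{\dot{v}_1} = L^+\mathcal{P}_1 \times^{L^+\mathcal{I}} \cdots \times^{L^+\mathcal{I}} L^+\mathcal{P}_n$ (without the final quotient by $L^+\mathcal{I}$), which is an $L^+\mathcal{I}$-torsor over $D_{\dot{v}_1}$; recording the left $\tau_1$-translation (and using that a lift of $\tau_1$ normalizes $L^+\mathcal{I}$ and conjugates simple affine reflections to simple affine reflections) identifies $D_{\dot{v}_1}$ with $D_{\dot{w}_1}$. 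By associativity of the twisted product one has $D_{\dot{w}} = \widetilde{D}_{\dot{v}_1} \times^{L^+\mathcal{I}} D_{\dot{w}_2}$, and the Demazure map factors as
$$D_{\dot{w}} \xrightarrow{\ \Id \times \pi_{\dot{w}_2}\ } \widetilde{D}_{\dot{v}_1} \times^{L^+\mathcal{I}} \Fl_{w_2} \xrightarrow{\ \mathrm{mult}\ } \Fl_{\mathcal{G}}, \qquad \mathrm{mult}((p_i),x) = \tau_1 p_1 \cdots p_n \cdot x.$$

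Next I would identify the middle term with a resolution of the first factor of the convolution space: the $L^+\mathcal{I}$-equivariant map $\widetilde{D}_{\dot{v}_1} \to LG$, $(p_i) \mapsto \tau_1 p_1 \cdots p_n$, exhibits $\widetilde{D}_{\dot{v}_1}$ as a reduction of structure group to $L^+\mathcal{I}$ of the pullback along $\pi_{\dot{w}_1} \colon D_{\dot{w}_1} \to \Fl_{\mathcal{G}}$ of the torsor $LG \to \Fl_{\mathcal{G}}$, whence a canonical identification $\widetilde{D}_{\dot{v}_1} \times^{L^+\mathcal{I}} \Fl_{w_2} = D_{\dot{w}_1} \times^{L^+\mathcal{G}} \Fl_{w_2}$ under which $\mathrm{mult}$ becomes $m \circ \epsilon$, with $\epsilon \colon D_{\dot{w}_1} \times^{L^+\mathcal{G}} \Fl_{w_2} \to \Fl_{w_1} \times^{L^+\mathcal{G}} \Fl_{w_2}$ the base change of $\pi_{\dot{w}_1}$ along the first projection. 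Setting $\rho := \epsilon \circ (\Id \times \pi_{\dot{w}_2}) \colon D_{\dot{w}} \to \Fl_{w_1} \times^{L^+\mathcal{G}} \Fl_{w_2}$, this gives the identity of maps $m \circ \rho = (\text{left translation by }\tau_1) \circ \pi_{\dot{w}}$. Since $\Id \times \pi_{\dot{w}_2}$ and $\epsilon$ are base changes of $\pi_{\dot{w}_2}$ and $\pi_{\dot{w}_1}$ --- which by Lemma~\ref{lem-Demazure.rational}(2) (applied with $\mathbf{f}' = \mathbf{f}$, using $w_i \in {}_{\mathbf{f}} W^{\mathbf{f}}$ and that the $\dot{w}_i$ are reduced) are proper and surject onto $\Fl_{w_2}$ and $\Fl_{w_1}$ --- the map $\rho$ is proper and surjective, and comparing images in the displayed identity yields part (1).

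For part (2), I would show $R\rho_*(\mathcal{O}_{D_{\dot{w}}}) \cong \mathcal{O}_{\Fl_{w_1} \times^{L^+\mathcal{G}} \Fl_{w_2}}[0]$: étale-locally on the base, after trivializing the relevant $L^n\mathcal{G}$- or $L^n\mathcal{I}$-torsor, each of $\Id \times \pi_{\dot{w}_2}$ and $\epsilon$ becomes a product of the identity on a perfect Schubert scheme with $\pi_{\dot{w}_2}$, resp.\ $\pi_{\dot{w}_1}$, so since $R\pi_{\dot{w}_i,*}(\mathcal{O}) \cong \mathcal{O}[0]$ by Lemma~\ref{lem-Demazure.rational}(1), the Künneth formula and flat base change (reducing to geometric fibers as in the proof of Lemma~\ref{lem-Demazure.rational}, via \cite[Lemma 6.9]{BhattScholze:Projectivity}, to handle the perfections) give that each pushes $\mathcal{O}$ forward to $\mathcal{O}[0]$; composing gives the claim. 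Then
$$Rm_*(\mathcal{O}_{\Fl_{w_1} \times^{L^+\mathcal{G}} \Fl_{w_2}}) \cong Rm_* R\rho_*(\mathcal{O}_{D_{\dot{w}}}) = R(m \circ \rho)_*(\mathcal{O}_{D_{\dot{w}}}) = R(\tau_1 \cdot \pi_{\dot{w}})_*(\mathcal{O}_{D_{\dot{w}}}) \cong \mathcal{O}_{\tau_1 \cdot \pi_{\dot{w}}(D_{\dot{w}})}[0]$$
by Lemma~\ref{lem-Demazure.rational}(1) again. Finally, $m$ restricted to $\Fl_{w_1} \times^{L^+\mathcal{G}} \Fl_{w_2}$ is the perfection of a proper morphism, so $Rm_! = Rm_*$; applying $Rm_*$ to the Artin--Schreier sequence $0 \to \mathbb{F}_p \to \mathcal{O} \xrightarrow{\Frob - \Id} \mathcal{O} \to 0$ and using the previous display identifies $Rm_!(\mathbb{F}_p)$ with the fiber of $\Frob - \Id$ on $\mathcal{O}_{\tau_1 \cdot \pi_{\dot{w}}(D_{\dot{w}})}$, which is $\mathbb{F}_p[0]$, exactly as at the end of the proof of Lemma~\ref{lem-Demazure.rational}(1).

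The main obstacle is the bookkeeping in the first two paragraphs: arranging the loop-group torsors, the internal $L^+\mathcal{I}$-quotients inside $D_{\dot{w}}$, and the left/right placements of $\tau_1$ and $\tau_2$ so that $\rho$ is well defined and the identity $m \circ \rho = (\tau_1 \cdot) \circ \pi_{\dot{w}}$ holds on the nose, together with the verification of the identification $\widetilde{D}_{\dot{v}_1} \times^{L^+\mathcal{I}} \Fl_{w_2} = D_{\dot{w}_1} \times^{L^+\mathcal{G}} \Fl_{w_2}$. Once Lemma~\ref{lem-Demazure.rational} is available, the cohomological part is formal.
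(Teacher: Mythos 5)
Your proof follows essentially the same route as the paper's: both factor the Demazure map $\pi_{\dot{w}}$ through the convolution space $\Fl_{w_1} \times^{L^+\mathcal{G}} \Fl_{w_2}$ (via $\Id \times \pi_{\dot{w}_2}$ followed by a base-change of $\pi_{\dot{w}_1}$), deduce that $Rm_*$ preserves the structure sheaf from Lemma~\ref{lem-Demazure.rational} by checking \'etale-locally on geometric fibers, and then pass to $Rm_!(\mathbb{F}_p)$ via the Artin--Schreier sequence. Your handling of the $\tau_1$-translation and the torsor reduction through $\widetilde{D}_{\dot{v}_1} \to LG$ is spelled out in slightly more detail, but the argument is the same.
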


\begin{proof}
Any lift $\tilde \tau_1 \in N_G(S)(F)$ acts on $\Fl_{\mathcal{G}}$ by multiplication on the left, and this action preserves $L^+\mathcal{I}$-orbits, so $\tau_1 \cdot \pi_{\dot{w}}(D_{\dot{w}})$ is well-defined. Since $m$ is $LG$-equivariant, after multiplying on the left by $\tilde \tau_1$ we can assume $w_1 \in W_{\af}$. Now consider the following commutative diagram.
$$\xymatrix{
D_{\dot{w}} \ar@{=}[d] \ar[rr]^{\pi_{\dot{w}}} & & \pi_{\dot{w}}(D_{\dot{w}})  \\
D_{\dot{w}_1} \times^{L^+ \mathcal{I}} D_{\dot{w}_2}  \ar[r]^{\Id \times \pi_{\dot{w}_2}} & D_{\dot{w}_1} \times^{L^+ \mathcal{I}} \Fl_{w_2} \ar[r]^{\pi_{\dot{w}_1} \times \Id} &  \ar[u]^m \Fl_{w_1} \times^{L^+\mathcal{G}} \Fl_{w_2}
}$$
Here we have factored  $\pi_{\dot{w}}$ as the composition
$$(p_1, \cdots, p_n, q_1, \cdots, q_m) \mapsto (p_1, \cdots, p_n, q_1 \cdots q_m \tau_2) \mapsto (p_1 \cdots p_n, q_1 \cdots q_m\tau_2) \mapsto p_1 \cdots q_m\tau_2.$$
By Lemma \ref{lem-Demazure.rational}, each map (except possibly $m$) is surjective and pushforward preserves the structure sheaf. For the bottom two maps, we also use flat base change and the fact that both properties can be checked \'etale-locally. It follows that $m$ maps $\Fl_{w_1} \times^{L^+\mathcal{I}} \Fl_{w_2}$ onto $\pi_{\dot{w}}(D_{\dot{w}})$, and $Rm_*$ preserves the structure sheaf. It also follows that $Rm_!(\mathbb{F}_p) \cong \mathbb{F}_p[0]$.
\end{proof}

\subsection{Geometry of generalized Mirkovi\'c--Vilonen cycles}
\subsubsection{Attractors and fixed points} \label{sect-attractors}
Let $R$ be a ring, and let $\mathbb{G}_{m}$ be the multiplicative group over $\Spec(R)$. For an $R$-scheme $X$ with an action of $\mathbb{G}_m$, we have the following functors on $R$-algebras, cf. \cite{Drinfeld:Gm, Richarz:Gm}. 
\begin{align} \label{eqn-Gm-action}
    X^0 \colon R' \mapsto & \Hom_R^{\mathbb{G}_m}(\Spec(R'), X),  \\
    X^+ \colon R' \mapsto  & \Hom_R^{\mathbb{G}_m}(\mathbb{A}^1_{R'}, X). \label{eqn-Gm-action2}
\end{align}
Here $\Spec(R')$ has the trivial $\mathbb{G}_m$-action and $\mathbb{A}^1_{R'}$ has the usual (multiplicative) action. These are the functors of \emph{fixed-points} and \emph{attractors}, respectively (we will not need the repellers). There are natural maps
$$\xymatrix{
X^0 \ar@/^1pc/[rr]^{\iota^0} & X^+ \ar[l]^{q^+} \ar[r]_{\iota^+} & X.
}$$
Here $q^+$ is induced by the zero section $\Spec(R') \rightarrow \mathbb{A}^1_{R'}$. If $X$ is an (ind)-perfect $k$-scheme with an action of $(\mathbb{G}_m)_{\perf}$ we also use the notation $X^0$ and $X^+$ to denote the restriction of these functors to \emph{perfect} $k$-algebras. The domain of these functors will be clear from the context, and the their formation commutes with the passage from $X$ to $X_{\perf}$.

For the rest of subsection \ref{sect-attractors} we assume $R=k$ and that $X$ is a separated $k$-scheme of finite-type.

\begin{lem}\label{lem-attractor-props}

\noindent (1) The functors $X^0$ and $X^+$ are represented by $k$-schemes of finite-type. 

\noindent (2) The map $\iota^0$ is a closed immersion and $\iota^+$ is a monomorphism.

\noindent (3) The map $q^+$ is affine, has geometrically connected fibers, and induces a bijection $\pi_0(X^+) \cong \pi_0(X^0)$.
\end{lem}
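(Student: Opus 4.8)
The plan is to reduce to the case where $X = \Spec A$ is affine, where the assertions become a transparent computation in graded commutative algebra, and then to glue over a $\mathbb{G}_m$-stable affine open cover; the non-affine case in full generality is due to Drinfeld \cite{Drinfeld:Gm} and Richarz \cite{Richarz:Gm}, but the argument below applies directly to the (perfections of) projective schemes to which the lemma is applied in this paper.

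\emph{The affine case.} Let $A = \bigoplus_{n \in \mathbb{Z}} A_n$ be the $\mathbb{Z}$-grading induced by the action and set $A_{\neq 0} = \bigoplus_{n \neq 0} A_n$, $A_{<0} = \bigoplus_{n<0} A_n$. By \eqref{eqn-Gm-action}, a $\mathbb{G}_m$-equivariant map $\Spec R' \to X$ is a graded ring map $A \to R'$ with $R'$ in degree $0$, hence one killing $A_{\neq 0}$; so $X^0 = \Spec\!\big(A/(A_{\neq 0})\big)$ and $\iota^0$ is the closed immersion with ideal $(A_{\neq 0})$. By \eqref{eqn-Gm-action2} (for the standard action, $t$ has weight $1$), a $\mathbb{G}_m$-equivariant map $\mathbb{A}^1_{R'} = \Spec R'[t] \to X$ is a graded map $A \to R'[t]$, which kills $A_{<0}$ and thus factors through the $\mathbb{N}$-graded ring $C := A/(A_{<0})$, with $C_0 = A/(A_{\neq 0})$; since $\psi \mapsto \psi|_{t=1}$ identifies graded ring maps $C \to R'[t]$ with ring maps $C \to R'$, we obtain $X^+ = \Spec(C)$, with $\iota^+$ corresponding to $A \twoheadrightarrow C$ (a closed immersion in the affine case) and $q^+$ to the inclusion $C_0 \hookrightarrow C$, which is affine. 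The fiber of $q^+$ over $x \in X^0$ is $\Spec(D)$ with $D := C \otimes_{C_0} \kappa(x)$, an $\mathbb{N}$-graded ring whose degree-$0$ part $\kappa(x)$ is a field; every homogeneous prime of $D$ then has vanishing degree-$0$ part and so is contained in the irrelevant ideal $D_+$, whence $[D_+]$ lies in the closure of every irreducible component and $\Spec(D)$ is connected --- and this persists under base field extension, so the fibers of $q^+$ are geometrically connected.

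\emph{Globalizing.} Choose a $\mathbb{G}_m$-stable affine open cover $\{U_i\}$ of $X$; such a cover exists when $X$ is normal (Sumihiro) or projective (linearize an ample bundle and intersect $X$ with the weight-eigenbasis charts of the ambient projective space), in particular in all cases we need, and in general one appeals to \cite{Drinfeld:Gm}. Forming $X^0$ and $X^+$ is compatible with such restriction: $U_i^0 \hookrightarrow X^0$ and $U_i^+ \hookrightarrow X^+$ are open subfunctors (being in $U_i^0$, resp.\ $U_i^+$, is the open condition that the map to $X$, resp.\ its restriction along the zero section, land in $U_i$ --- here one uses that a $\mathbb{G}_m$-equivariant $\mathbb{A}^1_{R'} \to X$ sending the zero section into $U_i$ automatically factors through $U_i$, since the preimage of $U_i$ is then a $\mathbb{G}_m$-stable open of $\mathbb{A}^1_{R'}$ containing the zero section and hence equal to $\mathbb{A}^1_{R'}$). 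Thus the affine schemes $U_i^0$ and $U_i^+$ from the previous step glue to represent $X^0$ and $X^+$, giving (1), and one reads off that $\iota^0$ is a closed immersion and that $q^+$ is affine with geometrically connected fibers. Globally $\iota^+$ need not be a closed immersion --- e.g.\ for $X = \mathbb{P}^1$ with the standard action one gets $X^+ = \mathbb{A}^1 \sqcup \Spec k$, whose map to $\mathbb{P}^1$ is not even an immersion --- but it is a monomorphism, this being local on $X$ and valid on each $U_i$; this gives (2). For the $\pi_0$-statement in (3), let $\sigma \colon X^0 \to X^+$ be the section of $q^+$ by constant maps and introduce the rescaling morphism $\mu \colon \mathbb{A}^1 \times X^+ \to X^+$, $(a,f) \mapsto \big[\, t \mapsto f(at) \,\big]$, which is well-defined because $f$ is $\mathbb{G}_m$-equivariant and satisfies $\mu|_{\{1\} \times X^+} = \Id$ and $\mu|_{\{0\} \times X^+} = \sigma \circ q^+$. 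Since $\mathbb{A}^1$ is connected, for each connected component $Z$ of $X^+$ the image $\mu(\mathbb{A}^1 \times Z)$ lies in a single component, necessarily $Z$; hence $\sigma(q^+(Z)) \subseteq Z$, so $\pi_0(\sigma) \circ \pi_0(q^+) = \Id$, and with $\pi_0(q^+) \circ \pi_0(\sigma) = \Id$ this exhibits $\pi_0(q^+)$ as a bijection with inverse $\pi_0(\sigma)$.

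\emph{Main obstacle.} The only real subtlety is the first step of globalization: an arbitrary separated finite-type $k$-scheme with a $\mathbb{G}_m$-action need not admit a $\mathbb{G}_m$-stable affine open cover, so the clean gluing argument above applies verbatim only for normal or (quasi-)projective $X$ --- which is all that is needed here, as the affine flag varieties, Schubert schemes, Demazure schemes and convolution Grassmannians in this paper are built from (perfections of) (ind-)projective schemes. For the statement in complete generality I would cite \cite{Drinfeld:Gm} (which handles algebraic spaces via descent along a smooth cover, bypassing equivariant affine charts) together with \cite{Richarz:Gm}, keeping the affine computation and the contraction $\mu$ as the conceptual content of parts (2) and (3).
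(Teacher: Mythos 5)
The paper's proof of this lemma is a one-line citation to \cite[1.2.2, 1.3.3, 1.4.2]{Drinfeld:Gm} and \cite[Corollary 1.12]{Richarz:Gm}, so your proposal is doing something genuinely different: it unpacks the affine computation (the graded-ring description of $X^0=\Spec A/(A_{\neq 0})$ and $X^+ = \Spec A/(A_{<0})$), glues over a $\mathbb{G}_m$-stable affine cover, and proves the $\pi_0$-bijection via the rescaling contraction $\mu\colon \mathbb{A}^1\times X^+\to X^+$. This is essentially Drinfeld's own proof strategy reproduced in the special case of normal or projective $X$ (which, as you rightly observe, is all the paper needs), and both the affine case and the contraction argument are correct and cleanly explained. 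You are also honest that the general separated case requires Drinfeld's descent argument to bypass the absence of equivariant affine charts, which is exactly the point of \cite{Drinfeld:Gm}.

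There is, however, one step that does not go through as stated: the claim that $\iota^+$ is a monomorphism because ``this being local on $X$ and valid on each $U_i$.'' Being a monomorphism is local on the target, so one would need $(\iota^+)^{-1}(U_i)\to U_i$ to be a monomorphism; but what you have established is that $U_i^+\to U_i$ is a closed immersion, and $U_i^+$ is strictly smaller than $(\iota^+)^{-1}(U_i)$ in general. Your own example $X=\mathbb{P}^1$ illustrates this: with $U_1$ (resp. $U_2$) the standard chart around $0$ (resp.\ $\infty$), one has $U_2^+=\{\infty\}$ while $(\iota^+)^{-1}(U_2)=\mathbb{G}_m\sqcup\{\infty\}$. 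One also cannot check being a monomorphism ``locally on the source'' using the cover $\{U_i^+\}$, since a pair of points $f,g$ with the same image in $X$ need not lie in a common $U_i^+$. The fix is a direct argument that does not use the cover at all and does use separatedness of $X$ (which is a hypothesis): if $\tilde f,\tilde g\colon \mathbb{A}^1_{R'}\to X$ are $\mathbb{G}_m$-equivariant and agree along the unit section, then by equivariance they agree on $\mathbb{G}_{m,R'}$; their equalizer is a closed subscheme of $\mathbb{A}^1_{R'}$ (separatedness of $X$) containing the schematically dense open $\mathbb{G}_{m,R'}$, hence is all of $\mathbb{A}^1_{R'}$. I would replace the locality remark with this argument. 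With that replacement the proposal is a sound, self-contained account of the lemma in the projective/normal case, which is what the paper actually uses.
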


\begin{proof}
This is \cite[1.2.2, 1.3.3, 1.4.2]{Drinfeld:Gm} and \cite[Corollary 1.12]{Richarz:Gm}.
\end{proof}

We write the connected components of $X^+$ as
$$X^+ = \bigsqcup_{i \in \pi_0(X^0)} X^+_i.$$
Similarly, let $\{X_i^0\}_{i \in \pi_0(X^0)}$ be the connected components of $X^0$. Let $q^+_i \colon X^+_i \rightarrow X^0_i$ be the restriction of $q^+$ to the corresponding connected component.

A sequence $Z_0 \subset Z_1 \subset \cdots \subset Z_n = X$ of closed subschemes is called a \emph{filtration} of $X$, and the schemes $Z_i \setminus Z_{i-1}$ are the \emph{cells} of the filtration. With additional assumptions, we can say more about $X^+$ as in the following lemma.

\begin{lem} \label{lem-X.normal.proj}
Suppose $k$ is algebraically closed and that $X$ is normal, projective, and connected. Then $q^+$ is  bijective on points and each $q_i^+$ is a locally closed immersion. Furthermore, there exists a filtration of $X$ having the $X_i^+$ as its cells.
\end{lem}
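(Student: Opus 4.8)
The plan is to recognize the statement as the Bia{\l}ynicki--Birula decomposition of a normal projective variety and to reduce to the model case of projective space. First I would fix a $\mathbb{G}_m$-equivariant closed immersion $X \hookrightarrow \mathbb{P}(V)$ into the projectivization of a finite-dimensional $\mathbb{G}_m$-representation $V$; this exists because $X$ is projective and $\mathbb{G}_m$ is connected, so a sufficiently high power of an ample line bundle on $X$ admits a $\mathbb{G}_m$-linearization (standard linearization, as in geometric invariant theory). Writing $V = \bigoplus_n V_n$ for the weight decomposition and $V_{\geq n} = \bigoplus_{m \geq n} V_m$, the fixed points and attractors of $\mathbb{P}(V)$ are explicit: $\mathbb{P}(V)^0 = \bigsqcup_n \mathbb{P}(V_n)$, the attractor of $\mathbb{P}(V_n)$ is the reduced locally closed subscheme $\mathbb{P}(V_{\geq n}) \setminus \mathbb{P}(V_{>n})$ fibering over $\mathbb{P}(V_n)$ by an affine bundle via $q^+$, and the decreasing chain $\{\mathbb{P}(V_{\geq n})\}_n$ of closed subschemes exhibits the attractor stratification as a filtration. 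Since the formation of $(-)^0$ and $(-)^+$ is compatible with closed $\mathbb{G}_m$-stable immersions (so $X^0 = X \times_{\mathbb{P}(V)} \mathbb{P}(V)^0$ and $X^+ = X \times_{\mathbb{P}(V)} \mathbb{P}(V)^+$), this describes $X^0$ and $X^+$ as subschemes of $X$.

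Next I would prove bijectivity on $k$-points: given $x \in X(k)$, its orbit morphism $\mathbb{G}_m \to X$ extends uniquely to a morphism $\mathbb{A}^1 \to X$ by the valuative criterion (properness of $X$), and this extension is automatically $\mathbb{G}_m$-equivariant, so $x$ lies in the image of $\iota^+$ on $k$-points. Since $\iota^+$ is a monomorphism by Lemma \ref{lem-attractor-props}(2), it is therefore bijective on $k$-points, giving the set-theoretic decomposition $X(k) = \bigsqcup_{i \in \pi_0(X^0)} X_i^+(k)$.

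Then I would transport the filtration. Intersecting $\{\mathbb{P}(V_{\geq n})\}_n$ with $X$ yields a chain of closed subschemes of $X$ whose successive cells are $C_n := X \cap \bigl(\text{attractor of }\mathbb{P}(V_n)\bigr) = (q^+)^{-1}(X \cap \mathbb{P}(V_n))$. As the $\mathbb{P}(V_n)$ are pairwise disjoint in $\mathbb{P}(V)$, each connected component $X_i^0$ of $X^0$ lies in a unique $\mathbb{P}(V_n)$, so $X_i^+ = (q^+)^{-1}(X_i^0)$ is clopen in $C_n$ and hence a locally closed subscheme of $X$; here $q_i^+$ is a restriction of the affine-bundle projection, and this is where I would use normality of $X$ to ensure that $X_i^+ \hookrightarrow X$ is an honest locally closed immersion rather than merely a monomorphism onto a locally closed subset. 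Finally, within each step $Z \subset Z \cup C_n$ one peels off the finitely many components $X_i^+ \subseteq C_n$ one at a time: since $X_i^+$ is closed in $C_n$ and $C_n$ is open in $Z \cup C_n$, the closure of $X_i^+$ in $X$ meets $C_n$ exactly in $X_i^+$ and otherwise lies in $Z$, so every partial union remains closed. This produces the desired filtration of $X$ with cells the $X_i^+$.

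The step I expect to be the main obstacle is the scheme-theoretic bookkeeping concentrated in the previous paragraph: verifying that the functorial attractor $X^+$ of \eqref{eqn-Gm-action2} really coincides, with its reduced structure, with the locally closed subscheme cut from $\mathbb{P}(V)^+$, and that the resulting decomposition is filterable. This is where normality of $X$ is indispensable --- for a non-normal projective $X$ the attractor scheme can acquire embedded components and the decomposition can fail to be a filtration --- and one would invoke structural results such as those of \cite{Drinfeld:Gm} together with normality to exclude this. A minor and essentially formal secondary point is the existence of the $\mathbb{G}_m$-equivariant projective embedding, which uses only that $\mathbb{G}_m$ is connected.
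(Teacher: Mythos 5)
Your proposal is correct and takes essentially the same route as the paper: reduce to a $\mathbb{G}_m$-equivariant projective embedding $X \hookrightarrow \mathbb{P}(V)$ and then pull back the explicit Bia\l ynicki--Birula filtration of $\mathbb{P}(V)$; this is exactly what the paper points to via Sumihiro and \cite{BB:Actions,BB:Properties} through the citation of \cite[\S 2.3]{CP:Constant}.

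One correction, though, on \emph{where} the hypothesis of normality is actually doing its work. You place it at the step showing each $X_i^+ \hookrightarrow X$ is an honest locally closed immersion, and you call the existence of the equivariant embedding ``a minor and essentially formal secondary point'' that ``uses only that $\mathbb{G}_m$ is connected.'' This is backwards. The linearization of a power of an ample line bundle — hence the $\mathbb{G}_m$-equivariant embedding into $\mathbb{P}(V)$ — is Sumihiro's theorem, and it uses normality of $X$ in an essential way (normality enters in the Picard-group argument underlying linearizability; without it equivariant projective embeddings can fail to exist). By contrast, once the equivariant closed immersion $X \hookrightarrow \mathbb{P}(V)$ is in hand, the local closedness of the $X_i^+$ is purely formal: since $X$ is closed and $\mathbb{G}_m$ is scheme-theoretically dense in $\mathbb{A}^1$, every $\mathbb{G}_m$-equivariant $\mathbb{A}^1_{R'} \to \mathbb{P}(V)$ sending $1$ into $X$ lands in $X$, so the functorial attractor satisfies $X^+ = X \times_{\mathbb{P}(V)} \mathbb{P}(V)^+$ on the nose, and each component of $X^+$ is a clopen piece of the base change of a locally closed immersion, hence locally closed in $X$ — with no further appeal to normality or to \cite{Drinfeld:Gm} to exclude pathologies. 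Your remaining steps (valuative criterion for surjectivity of $\iota^+$ on points, intersecting the chain $\{\mathbb{P}(V_{\geq n})\}$ with $X$, and peeling off clopen components of each cell) are fine as written.
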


\begin{proof}
This is a special case of \cite[\S 2.3]{CP:Constant}. We mention that the hypotheses on $X$ are used to ensure that there exists a $\mathbb{G}_m$-equivariant embedding of $X$ into a projective space $\mathbb{P}(V)$, where $\mathbb{G}_m$ acts linearly on $V$. The existence of such an embedding is a result of Sumihiro \cite{Sumihiro:Equivariant}.~Since the $\mathbb{G}_m$-action on $V$ can be diagonalized, the filtration can be explicitly constructed as in the works of Bia\l ynicki-Birula \cite{BB:Actions, BB:Properties}.
\end{proof}

For applications to mod $p$ Hecke algebras, we will apply the following result to a deperfection of the Demazure resolution \eqref{eqn-Dem-res} in Proposition \ref{prop-equiv-res} when $F$ has characteristic $p$. When $F$ has characteristic zero we will verify directly that the conclusion of lemma is still true for the perfect Witt vector Demazure resolution.

\begin{lem} \label{lem-attractor.coh}
In addition to the assumptions of Lemma \ref{lem-X.normal.proj}, suppose moreover that there exists a smooth projective $k$-scheme $\tilde X$ equipped with a $\mathbb{G}_m$-action and a $\mathbb{G}_m$-equivariant surjection $\pi \colon \tilde X \rightarrow X$. \\
(1) There is exactly one $i_0 \in \pi_0(X^+)$ such that $X_{i_0}^+$ is closed in $X$, and it is characterized by the property that $q^+_{i_0}$ is a universal homeomorphism.\\
(2) Furthermore, if $R\pi_!(\mathbb{F}_p) = \mathbb{F}_p[0]$, we have
$$
Rq_{i!}^+(\mathbb{F}_p)=
\begin{cases}
\mathbb{F}_p[0], & i= i_0 \\
0, & \text{otherwise.}
\end{cases}$$
\end{lem}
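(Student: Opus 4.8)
The plan is to use the auxiliary smooth surjection $\pi\colon \tilde X \to X$ to transfer the problem to $\tilde X$, where the attractor geometry is governed by Bia\l ynicki--Birula theory in the smooth case. First I would recall (from Lemma \ref{lem-X.normal.proj} applied to $\tilde X$, or directly from \cite{BB:Actions}) that since $\tilde X$ is smooth and projective, $\tilde X^+ \to \tilde X^0$ is a disjoint union of affine bundles over the connected components of $\tilde X^0$, and there is a filtration of $\tilde X$ with cells the $\tilde X^+_j$. In particular, exactly one component $\tilde X^+_{j_0}$ is closed in $\tilde X$, namely the one whose associated fixed component has an affine bundle of rank $0$ (equivalently, the cell that is a closed stratum in the filtration), and this is the unique component on which $q^+$ restricts to an isomorphism onto a fixed component. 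Next I would use $\mathbb{G}_m$-equivariance of $\pi$ to get induced maps $\pi^0\colon \tilde X^0 \to X^0$ and $\pi^+\colon \tilde X^+ \to X^+$ compatible with the $q^+$'s; since $\pi$ is surjective and proper (both $\tilde X$, $X$ projective), $\pi^0$ and $\pi^+$ are surjective as well (a $\mathbb{G}_m$-fixed point of $X$, resp. an $\mathbb{A}^1$-family in $X$, lifts after base change because $\pi$ is proper and $\mathbb{G}_m$ is linearly reductive, so one can average/spread out the lift — here I would cite the relevant statement in \cite{Drinfeld:Gm} or \cite{Richarz:Gm} on functoriality of attractors under proper equivariant maps).

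For part (1): combining surjectivity of $\pi$ with Lemma \ref{lem-X.normal.proj} (which gives that $q^+\colon X^+ \to X^0$ is bijective on points and each $q^+_i$ is a locally closed immersion, and that the $X^+_i$ are the cells of a filtration of $X$), exactly one cell of the filtration is closed, giving a unique $i_0$ with $X^+_{i_0}$ closed in $X$. To see this $i_0$ is characterized by $q^+_{i_0}$ being a universal homeomorphism: $q^+_{i_0}$ is already a locally closed immersion that is bijective on points (by Lemma \ref{lem-X.normal.proj}) onto $X^0_{i_0}$, hence a universal homeomorphism once we know it is surjective onto the closed component $X^0_{i_0}$; conversely if $q^+_i$ is a universal homeomorphism then $X^+_i \to X^0_i \hookrightarrow X^0 \hookrightarrow X$ shows $X^+_i$ is closed in $X$ (as $X^0$ is closed by Lemma \ref{lem-attractor-props}(2), and a universal homeomorphism onto a closed subscheme is a closed map). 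Uniqueness then forces $i = i_0$. I would also record that $\pi^+$ maps $\tilde X^+_{j_0}$ into $X^+_{i_0}$: indeed $\pi(\tilde X^+_{j_0})$ is closed (proper image) and $\mathbb{G}_m$-stable, hence a union of cells, but it is irreducible and contained in the closed locus, so it lands in $X^+_{i_0}$; conversely $\pi^+$ is surjective onto $X^+_{i_0}$ by properness.

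For part (2): here is where the cohomological input enters and I expect the main obstacle. The idea is to compute $Rq^+_{i!}(\mathbb{F}_p)$ by pushing forward along $\pi$. For $i \ne i_0$, $X^+_i \to X^0_i$ is a locally closed immersion that is bijective on points but with positive-dimensional source in the sense that the fibers of the corresponding map upstairs are positive-dimensional affine spaces; more precisely, I would argue that $(\pi^+_i)\colon \tilde X^+_{j} \to X^+_i$ (for $j$ lying over $i$) fits into a square with the affine-bundle map $\tilde q^+_j$, and since $R\pi_!(\mathbb{F}_p)=\mathbb{F}_p[0]$ one deduces $R(\pi^+_j)_!(\mathbb{F}_p)=\mathbb{F}_p[0]$ by base change along the locally closed immersion $\iota^+$ (pullback of constant sheaf is constant, and $!$-pushforward commutes with the base change $X^+ \hookrightarrow X \leftarrow \tilde X \hookleftarrow \tilde X^+$, using that $\tilde X^+ = \tilde X \times_X X^+$ — this identity of attractors under the fiber product needs to be checked, but follows from the functorial definition \eqref{eqn-Gm-action2} since $\pi$ is a monomorphism... no: $\pi$ need not be a monomorphism, so instead use that $\tilde X^+ \to \tilde X \times_X X^+$ is an isomorphism because forming attractors commutes with the base change $X^+ \to X$, which holds as $X^+ \to X$ is a monomorphism by Lemma \ref{lem-attractor-props}(2)). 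Then for $i \ne i_0$ the composite $\tilde X^+_j \to X^+_i \xrightarrow{q^+_i} X^0_i$ factors the affine bundle $\tilde q^+_j$ of positive rank $r_j > 0$; since $R(\tilde q^+_j)_!(\mathbb{F}_p) = \mathbb{F}_p[-2r_j]$ with $\mathbb{F}_p$-coefficients this is $0$ unless... no — in $\mathbb{F}_p$-coefficient étale cohomology, $Rf_!(\mathbb{F}_p)$ for $f$ an $\mathbb{A}^r$-bundle with $r>0$ is $0$ because $H^*_c(\mathbb{A}^r_{\overline k}, \mathbb{F}_p) = 0$ (Artin--Schreier: $\mathbb{A}^1$ has vanishing compactly supported mod $p$ cohomology). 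Feeding this through $R(\pi^+_j)_!(\mathbb{F}_p) = \mathbb{F}_p[0]$ and proper base change, we get $Rq^+_{i!}(\mathbb{F}_p) = Rq^+_{i!} R(\pi^+_j)_!(\mathbb{F}_p) = R(\tilde q^+_j)_!(\mathbb{F}_p) = 0$. For $i = i_0$, $q^+_{i_0}$ is a universal homeomorphism, so $Rq^+_{i_0 !}(\mathbb{F}_p) = \mathbb{F}_p[0]$ by topological invariance of étale cohomology. The main obstacle I anticipate is bookkeeping the compatibilities: that $\tilde X^+ = \tilde X \times_X X^+$, that $\pi^+$ restricts correctly on components (i.e. the component $\tilde X^+_j$ lying over $i \ne i_0$ really is an affine bundle of rank $> 0$ rather than rank $0$ — this is exactly where smoothness of $\tilde X$ and the BB decomposition are essential, since on $X$ itself all cells are bijective-on-points and one cannot see the rank), and that $!$-pushforward along the non-proper locally closed immersions behaves well; but each of these is standard once set up carefully.
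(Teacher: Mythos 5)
Your overall strategy — transfer the problem to $\tilde X$ via $\pi$ and exploit Bia\l ynicki--Birula theory for the smooth projective $\tilde X$ — is exactly the paper's approach. However there are two concrete gaps where the argument, as written, does not close.

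First, in part (1), the step ``exactly one cell of the filtration is closed, giving a unique $i_0$'' is not justified. A filtration of $X$ with cells $X_i^+$ only tells you that \emph{at least} one cell is closed (the bottom one); it does not rule out a priori that some intermediate $Z_i\setminus Z_{i-1}$ is also closed, because the $Z_i$ need not be connected. The paper's argument is: the preimage $\pi^{-1}(X^+_{i_0})$ is a closed, $\mathbb{G}_m$-stable, hence projective subscheme of $\tilde X$, so by \cite{BB:Properties} it contains a closed attractor, which is necessarily the \emph{unique} closed attractor of the smooth projective $\tilde X$ (\cite[\S 4]{BB:Actions}); since the $X^+_i$ are disjoint, this pins down $i_0$. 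You state that $\tilde X$ has a unique closed attractor, but you never connect that fact to the uniqueness of $i_0$ on $X$; you should.

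Second, and more seriously, in part (2) your final chain $Rq^+_{i!} \, R(\pi^+_j)_!(\mathbb{F}_p) = R(\tilde q^+_j)_!(\mathbb{F}_p) = 0$ silently assumes a \emph{single} cell $\tilde X^+_j$ lies over $X^+_i$. In general $\pi^{-1}(X^+_i)$ (a locally closed, $\mathbb{G}_m$-stable subscheme of $\tilde X$) is a union of \emph{several} BB-cells $\tilde X^+_{j_1},\dots,\tilde X^+_{j_r}$, and the restriction of $\pi$ to a single one of them need \emph{not} satisfy $R(\pi^+_{j_k})_!(\mathbb{F}_p)=\mathbb{F}_p[0]$; only the pushforward along the whole $\pi_i\colon\pi^{-1}(X^+_i)\to X^+_i$ does, by base change from $R\pi_!(\mathbb{F}_p)=\mathbb{F}_p[0]$. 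The paper then intersects the BB filtration of $\tilde X$ with $\pi^{-1}(X^+_i)$ to obtain a filtration of $\pi^{-1}(X^+_i)$ whose cells are the $\tilde X^+_{j_k}$, and uses excision: $R(q^+_i\circ\pi_i)_!(\mathbb{F}_p)$ lies in the triangulated subcategory generated by the $R\pi^0_{j_k!}\,R\tilde q^+_{j_k!}(\mathbb{F}_p)$, each of which vanishes because $\tilde q^+_{j_k}$ is a positive-rank affine bundle and $R\Gamma_c(\mathbb{A}^n_{\overline k},\mathbb{F}_p)=0$ for $n>0$. You need this excision/filtration step. Relatedly, working with the scheme-theoretic preimage $\pi^{-1}(X^+_i)\subset\tilde X$ and restricting $\pi$ to it sidesteps the identification $\tilde X^+ = \tilde X\times_X X^+$ that you flag as uncertain; the paper never needs that identity, and neither do you. (Your displayed equality also quietly drops the proper pushforward along $\pi^0_j\colon \tilde X^0_j\to X^0_i$; the correct statement is $R(q^+_i\circ\pi^+_j)_!(\mathbb{F}_p) = R\pi^0_{j!}\,R\tilde q^+_{j!}(\mathbb{F}_p)$.)
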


\begin{proof}
This is \cite[Corollary 2.3.5]{CP:Constant}; we sketch the argument here. For (1), we note that by Lemma \ref{lem-X.normal.proj} there exists some $i_0 \in \pi_0(X^+)$ such that $X_{i_0}^+$ is closed in $X$. For general $i \in \pi_0(X^+)$ the fiber $\pi^{-1}(X_i)$ is a union of connected components of $\tilde{X}^+$. Since $\pi^{-1}(X_{i_0})$ is projective it contains a closed attractor by \cite{BB:Properties}, which is necessarily also closed in $\tilde{X}$. As $\tilde{X}$ is smooth and projective it has a unique closed attractor by \cite[\S 4]{BB:Actions}, so $i_0$ is uniquely determined. By Lemma \ref{lem-attractor-props} (3) the restriction of $q^+$ to any connected component of $X^+$ which is also proper maps bijectively onto its image in $X^0$. Since $q^+$ admits a section it follows that $i_0$ is uniquely characterized by property that $q^+_{i_0}$ is an isomorphism on reduced loci and hence is a universal homeomorphism.

For (2), $Rq_{{i_0}!}^+(\mathbb{F}_p)=\mathbb{F}_p[0]$ because $q^+_{i_0}$ is a universal homeomorphism. It remains to show that $Rq_{i!}^+(\mathbb{F}_p)=\mathbb{F}_p[0]$ for all other $i$. Let $\pi_i$ be the restriction of $\pi$ to $\pi^{-1}(X_i)$. For $j \in \pi_0((\pi^{-1}(X_i))^+)$ let $\tilde{q}^+_j \colon \tilde{X}_j^+ \rightarrow \tilde{X}_j^0$ be the restriction of $\tilde{q}^+ \colon \tilde{X}^+ \rightarrow \tilde X^0$ to $\tilde{X}_j^+$. Let $\tilde{\iota} \colon \tilde{X}^0 \rightarrow \tilde{X}$ be the inclusion.
By Lemma \ref{lem-X.normal.proj}, $\pi^{-1}(X_i)$ has a filtration with cells isomorphic to the $\tilde{X}_j^+$ (obtained by intersecting a filtration of $\tilde{X}$ with $\pi^{-1}(X_i)$). Since $R\pi_!(\mathbb{F}_p) = \mathbb{F}_p[0]$ then $Rq_{{i}!}^+(\mathbb{F}_p) = R\pi_{i!}( Rq_{{i!}}^+(\mathbb{F}_p))$. Hence, by excision the object $Rq_{{i}!}^+(\mathbb{F}_p) \in D_{\et}(X^0_i, \mathbb{F}_p)$ lies in the subcategory generated under taking cones of morphisms between the objects $R\pi_{!} \circ \tilde \iota_! \circ R\tilde q_{j!}^+ (\mathbb{F}_p)$ for $j \in \pi_0((\pi^{-1}(X_i))^+)$ (see also \cite[Corollary 2.3.5]{CP:Constant} for an argument using exact sequences). It therefore suffices to prove that $R\tilde q_{j!}^+ (\mathbb{F}_p) = 0$. For this, we note that because $\tilde{X}$ is smooth then $\tilde q_{j}^+$ is an affine bundle by \cite[\S 4]{BB:Actions}, and because $i \neq i_0$ then $\tilde q_{j}^+$ has positive relative dimension. The desired vanishing then follows from the fact that $R\Gamma_c(\mathbb{A}^n, \mathbb{F}_p) = 0$ if $n > 0$, which may be computed from excision and the quasi-coherent cohomology of $\mathbb{P}^n$, cf. \cite[Lemma 2.1.1]{CP:Constant}. 
\end{proof}

\subsubsection{Parabolic subgroups} \label{sect-parabolics} \

\textit{For the rest of the paper we assume $\mathbf{f} = \mathbf{f}'$. We let $\mathcal{G} = \mathcal{G}_{\mathbf{f}}$ and $\Fl_w = \Fl_w(\mathbf{f}, \mathbf{f})$.} 

Fix a semi-standard $F$-Levi subgroup $M \subset G$, i.e.~the centralizer of a subtorus of $A$. Let $\lambda \colon \mathbb{G}_m \rightarrow T$ be a cocharacter defined over $F$. By conjugation, this gives rise to a $\mathbb{G}_m$-action on $G$. Then the attractor $P:=G^+$ as in \eqref{eqn-Gm-action2} is a parabolic $F$-subgroup of $G$. We may choose $\lambda$ so that $G^0 = M$ is the fixed point functor as in \eqref{eqn-Gm-action} for this $\mathbb{G}_m$-action defined by $\lambda$. The $k$-points of the unipotent radical $U \subset P$ are
\begin{equation} \label{eqn-unipotent} U(k) = \{g \in G(k) \:: \: \lim_{t \longrightarrow 0} \lambda(t) \cdot g \cdot \lambda(t)^{-1} = 1\}. \end{equation}

\begin{lem} \label{lem-parahoric-levi} Let $\mathcal{P} := \mathcal{G}^+$ and $\mathcal{M} := \mathcal{G}^0$ for the unique extension of $\lambda$ to a cocharacter $\mathbb{G}_{m, \mathcal{O}_F} \rightarrow \mathcal{T}$.
Suppose $k$ is a finite field or an algebraic closure thereof. \\
(1) The attractor functor $\mathcal{P}$ and fixed-point functor $\mathcal{M}$ are smooth, closed subgroup schemes of $\mathcal{G}$ with geometrically connected fibers.\\
(2) Furthermore, $\mathcal{M}$ is a parahoric group scheme for $M$, the natural map $\Fl_{\mathcal{M}} \rightarrow \Fl_{\mathcal{G}}$ is a closed immersion, and $\mathcal{M}(\mathcal{O}_F) = \mathcal{G}(\mathcal{O}_F) \cap M(F).$
\end{lem}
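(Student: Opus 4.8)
### Proof proposal

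The plan is to reduce everything to the corresponding statements about the group schemes $\mathcal{P}$ and $\mathcal{M}$ over $\mathcal{O}_F$, and then transport them to the loop level. First I would establish part (1). The key point is that the formation of attractors and fixed points for a $\mathbb{G}_m$-action commutes with base change, so it suffices to work with $\mathcal{G}$ as a smooth affine $\mathcal{O}_F$-group scheme with a $\mathbb{G}_m$-action through conjugation by $\lambda$. By the theory of Bia\l ynicki-Birula / Drinfeld (Lemma \ref{lem-attractor-props} applied fiberwise, together with the relative statements in \cite{Richarz:Gm}), $\mathcal{G}^0$ and $\mathcal{G}^+$ are represented by closed subgroup schemes of $\mathcal{G}$, smooth over $\mathcal{O}_F$; smoothness can be checked on fibers, and on each fiber $\mathcal{G}_s$ the fixed points and the attractor of a reductive-or-smooth-affine group under a torus action are smooth with connected fibers (the attractor is the ``parabolic-type'' subgroup $P(\lambda)$ and the fixed points are the Levi-type centralizer $Z_{\mathcal{G}_s}(\lambda)$). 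Connectedness of the generic fiber is exactly the statement that $G^+ = P$ is a parabolic and $G^0 = M$ is connected (both already recorded in \S\ref{sect-parabolics}); connectedness of the special fiber follows because $\mathbb{G}_m$-fixed points and attractors in a smooth affine group with connected fibers are again connected. This gives (1).

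For (2), I would argue as follows. By the valuative/functorial characterization of parahoric group schemes of \cite{BruhatTits:II} (or the appendix \cite[A.2]{Richarz:Affine} cited in the introduction), a smooth affine $\mathcal{O}_F$-group scheme with reductive generic fiber $M$ and connected special fiber which is the ``right size'' (i.e.\ whose $\mathcal{O}_{\breve F}$-points are the connected fixer of the induced facet) is the parahoric attached to that facet. Since $\mathcal{M} = \mathcal{G}^0$ is smooth affine with connected fibers by (1), and its generic fiber is $M$, it remains to identify $\mathcal{M}(\mathcal{O}_{\breve F})$ with the parahoric of $M$ attached to the image of $\mathbf{f}$ in $\mathscr{A}(M, A, F)$. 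This is precisely the content of \cite[A.2]{Richarz:Affine}: the fixed-point group scheme of the $\mathbb{G}_m$-action on $\mathcal{G}$ is the Bruhat--Tits group scheme of $M$ for the corresponding facet. Granting this, $\mathcal{M}$ is the parahoric for $M$, and taking $\mathcal{O}_F$-points: $\mathcal{M}(\mathcal{O}_F) = \mathcal{G}^0(\mathcal{O}_F) = \{ g \in \mathcal{G}(\mathcal{O}_F) : \lambda(t) g \lambda(t)^{-1} = g \text{ as } t \text{ varies} \} = \mathcal{G}(\mathcal{O}_F) \cap Z_G(\lambda)(F) = \mathcal{G}(\mathcal{O}_F) \cap M(F)$, where the last equality uses $M = G^0 = Z_G(\lambda)$ at the level of $F$-points.

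For the closed immersion $\Fl_{\mathcal{M}} \rightarrow \Fl_{\mathcal{G}}$, I would invoke the already-cited diagram and the statement that $p^0 \colon \Fl_{\mathcal{M}} \rightarrow (\Fl_{\mathcal{G}})^0$ is an open and closed immersion by \cite[Theorem 5.2]{AGLR:Local}, combined with the fact that $\iota^0 \colon (\Fl_{\mathcal{G}})^0 \rightarrow \Fl_{\mathcal{G}}$ is a closed immersion (the ind-scheme analogue of Lemma \ref{lem-attractor-props}(2), which holds for $\Fl_{\mathcal{G}}$ since it is an increasing union of perfections of projective $k$-schemes and $\iota^0$ is compatible with this exhaustion). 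The composite of a closed immersion with an open-and-closed immersion is a closed immersion, giving the claim. Alternatively one can argue directly: $\Fl_{\mathcal{M}}$ is the ind-scheme quotient $L M / L^+\mathcal{M}$ and the natural map to $\Fl_{\mathcal{G}} = LG/L^+\mathcal{G}$ is a closed immersion on each finite-dimensional perfect Schubert stratum, because the fibers of $LG \to \Fl_{\mathcal{G}}$ restricted to $LM$-orbits are $L^+\mathcal{M}$-orbits (using $LM \cap L^+\mathcal{G} = L^+\mathcal{M}$, which follows from $\mathcal{M}(\mathcal{O}_{\breve F}) = \mathcal{G}(\mathcal{O}_{\breve F}) \cap M(\breve F)$).

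The main obstacle I anticipate is the precise identification $\mathcal{M}(\mathcal{O}_{\breve F}) = \mathcal{G}(\mathcal{O}_{\breve F}) \cap M(\breve F)$ together with the recognition that this group scheme is exactly the \emph{parahoric} (and not merely a smooth model) for $M$; this is a genuine Bruhat--Tits-theoretic input and is the reason the reference \cite[A.2]{Richarz:Affine} is invoked. Everything else — smoothness, connectedness, the closed-immersion statement — is formal once that identification is in hand.
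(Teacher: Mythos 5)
Your proposal is correct in substance and follows essentially the same skeleton as the paper's proof: part (1) is reduced to general $\mathbb{G}_m$-action facts about smooth affine group schemes, the parahoric identification rests on the Bruhat--Tits input recorded in the appendix of \cite{Richarz:Affine}, and the closed immersion is deduced from the structure of the attractor/fixed-point diagram. The paper itself is terse: it cites \cite[Lemma 4.5, Lemma 4.6]{HainesRicharz:Test} for (1), \cite[Proposition 1.20]{Zhu:Mixed} for the closed immersion (equal-characteristic argument applying verbatim), and \cite[Lemma A.1]{Richarz:Affine} for the identification $\mathcal{M}(\mathcal{O}_F) = \mathcal{G}(\mathcal{O}_F) \cap M(F)$.

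Two small comments. First, for (1) you write ``smoothness can be checked on fibers''; strictly speaking a morphism to $\Spec(\mathcal{O}_F)$ is smooth iff it is flat and fiberwise smooth, so flatness needs a separate word — the cited results on $\mathbb{G}_m$-actions on smooth affine group schemes over a base supply this automatically, but it is worth being explicit. Second, your primary route to the closed immersion factors $\Fl_{\mathcal{M}} \to (\Fl_{\mathcal{G}})^0 \to \Fl_{\mathcal{G}}$ using \cite[Theorem 5.2]{AGLR:Local} for the first map and the Bia\l ynicki-Birula closed-immersion property of $\iota^0$ for the second. This is valid and is in fact how the paper treats things a few paragraphs later, but note that the paper's own proof of the lemma invokes the more elementary \cite[Proposition 1.20]{Zhu:Mixed} instead, which is the argument your alternative sketch (via $LM \cap L^+\mathcal{G} = L^+\mathcal{M}$ and properness on finite levels) is essentially reproducing; that alternative is the cleaner choice here since it avoids appealing to a result that is logically downstream in the paper's exposition.
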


\begin{proof}
The first part follows from \cite[Lemma 4.5, Lemma 4.6]{HainesRicharz:Test}. The proof that $\Fl_{\mathcal{M}} \rightarrow \Fl_{\mathcal{G}}$ is a closed immersion in equal characteristic applies verbatim in mixed characteristic, cf. \cite[Proposition 1.20]{Zhu:Mixed}. Finally, the description of $\mathcal{M}(\mathcal{O}_F)$ is \cite[Lemma A.1]{Richarz:Affine}.
\end{proof}

If $F$ has characteristic zero, the Teichm\"uller map induces a homomorphism $(\mathbb{G}_m)_{\perf} \rightarrow L^+\mathbb{G}_m$. The cocharacter $\lambda \colon \mathbb{G}_{m, \mathcal{O}_F} \rightarrow \mathcal{T}$ also induces a homomorphism $L^+\mathbb{G}_m \rightarrow L^+\mathcal{T}$. Since $\mathcal{T} \subset \mathcal{I}$ and $L^+\mathcal{I}$ acts on $\Fl_{\mathcal{G}}$,  this induces a $(\mathbb{G}_m)_{\perf}$-action on $\Fl_{\mathcal{G}}$. We may then define the fixed points $(\Fl_{\mathcal{G}})^0$ and attractors $(\Fl_{\mathcal{G}})^+$ as functors on perfect $k$-algebras. If has characteristic $p$ we replace the Teichm\"uller map with natural inclusion of units $R^{\times} = \mathbb{G}_m(R) \rightarrow \mathbb{G}_m(R[\![t]\!]) = L^+\mathbb{G}_m(R)$ for a $k$-algebra $R$.

\subsubsection{Equivariant resolutions}

\begin{theorem} Let $w \in {}_{\mathbf{f}} W ^\mathbf{f}$ and fix a reduced decomposition $\dot{w}$ as in \ref{sect-Demazure}. 
Then there exists a $\mathbb{G}_m$-equivariant deperfection \eqref{eqn-dem-dep} of the map $\pi_{\dot w} \colon D_{\dot w} \rightarrow \Fl_{w}$.
\end{theorem}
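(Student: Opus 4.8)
The plan is to realize the whole picture at a finite truncation level, where every object is an honest finite-type $k$-scheme, and to produce the $\mathbb{G}_m$-action from the Teichm\"uller (resp.\ units) homomorphism composed with $\lambda$. For the source I would argue exactly as in the proof of Lemma~\ref{lem-Demazure.rational} (following \cite[Proposition 8.7]{PappasRapoport:Twisted} and \cite[Proposition 3.5]{XuhuaZhou:ADLV}): for $n \gg 0$ each $L^n_p\mathcal{P}_i / L^n_p\mathcal{I}$ is a $\mathbb{P}^1_k$ admitting Zariski-local sections, so
$$D'_{\dot{w}} := L^n_p\mathcal{P}_1 \times^{L^n_p\mathcal{I}} \cdots \times^{L^n_p\mathcal{I}} L^n_p\mathcal{P}_n / L^n_p\mathcal{I}$$
is a smooth projective iterated $\mathbb{P}^1$-bundle over $k$ with $(D'_{\dot{w}})_{\perf} = D_{\dot{w}}$, independent of $n \gg 0$. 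The $\mathbb{G}_m$-action would come from the homomorphism of $k$-group schemes $\mathbb{G}_m \to L^n_p\mathbb{G}_m \xrightarrow{L^n_p\lambda} L^n_p\mathcal{T} \hookrightarrow L^n_p\mathcal{I}$, the first arrow being the Teichm\"uller lift $a \mapsto [a]$ if $F$ has characteristic zero and the inclusion $R^\times \hookrightarrow R[\![t]\!]^\times$ if $F$ has characteristic $p$ (a morphism of $k$-schemes, as the Teichm\"uller lift is multiplicative), acting by conjugation on each $L^n_p\mathcal{P}_i$ (legitimate since $\mathcal{T} \subset \mathcal{P}_i$). A short bookkeeping check shows this action on $\prod_i L^n_p\mathcal{P}_i$ is compatible with the twisting $\prod_i L^n_p\mathcal{I}$-action, hence descends to $D'_{\dot{w}}$, and it visibly perfects to the natural $(\mathbb{G}_m)_{\perf}$-action on $D_{\dot{w}}$ (left translation on the first factor, which on the twisted product coincides with simultaneous conjugation).

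For the target I would split into cases. If $F$ has equal characteristic, let $\Fl'_w := \Fl_w^{\can}$ be the canonical deperfection of $\Fl_w$, i.e.\ the scheme-theoretic image of the corresponding $L^+_p\mathcal{G}$-orbit inside the ind-projective $k$-scheme $(L_pG/L^+_p\mathcal{G})_{\et}$; the finite-level map $[p_1,\dots,p_n] \mapsto p_1\cdots p_n\dot\tau\cdot e$ factors through truncations, giving a morphism $\pi'_{\dot{w}}\colon D'_{\dot{w}} \to \Fl'_w$ that surjects onto $\Fl'_w$ and perfects to $\pi_{\dot{w}}$. Since $\mathbb{G}_m \to L^n_p\mathcal{I} \to L^n_p\mathcal{G}$ acts on $\Fl'_w$, and since $\lambda(t)$ and its conjugate $\dot\tau^{-1}\lambda(t)\dot\tau$ both lie in $\mathcal{T}\subset\mathcal{G}$ and hence fix the base point $e$, the identity $(\lambda(t) p_1\lambda(t)^{-1})\cdots(\lambda(t) p_n\lambda(t)^{-1})\dot\tau\cdot e = \lambda(t)\cdot(p_1\cdots p_n\dot\tau\cdot e)$ shows that $\pi'_{\dot{w}}$ is $\mathbb{G}_m$-equivariant. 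If moreover one wants the target normal (as one does in order to apply Lemma~\ref{lem-attractor.coh}), replace $\Fl_w^{\can}$ by its seminormalization, which is normal by \cite[Theorem 4.1]{FHLR:Local} and inherits the $\mathbb{G}_m$-action and the map by functoriality of seminormalization (using that $D'_{\dot{w}}$ is smooth, hence weakly normal). If $F$ has mixed characteristic there is no ambient canonical deperfection of $\Fl_{\mathcal{G}}$, so I would instead take $\Fl'_w$ from Proposition~\ref{prop-deperfection-construction}, attached to the subfield of $k(\Fl_w)$ corresponding under the birational map $\pi_{\dot{w}}^*$ to $k(D'_{\dot{w}})$; as both schemes are $p$-closed with these matching function fields, $*$-pullback of functions along $\pi_{\dot{w}}$ lands in $\mathcal{O}_{D'_{\dot{w}}}$ and produces the deperfected map $\pi'_{\dot{w}}$.

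The step I expect to be the main obstacle is putting a $\mathbb{G}_m$-action on a deperfection of the \emph{target} $\Fl_w$ for which $\pi'_{\dot{w}}$ is equivariant. In equal characteristic this is painless: $\Fl_w^{\can}$ already carries the $L^n_p\mathcal{G}$-action through which $\lambda$ factors, so equivariance reduces to equivariance of loop-group multiplication. In mixed characteristic only the abstract deperfection of Proposition~\ref{prop-deperfection-construction} is available, and one must descend the $\mathbb{G}_m$-action on $D'_{\dot{w}}$ along $\pi'_{\dot{w}}$ --- equivalently, verify that $\mathbb{G}_m\times D'_{\dot{w}}\to \Fl'_w$ is constant on the reduced fibres of $\Id\times\pi'_{\dot{w}}$; this holds after perfection, and it descends because $\pi'_{\dot{w}}$ is surjective with $\pi'_{\dot{w},*}\mathcal{O}_{D'_{\dot{w}}}\cong\mathcal{O}_{\Fl'_w}$ (which follows from Lemma~\ref{lem-Demazure.rational}) and $\Fl'_w$ is separated. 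The residual catch, and the source of the caveat in the introduction, is that this mixed-characteristic target deperfection is not smooth and need not be normal outside the range where Zhu's conjecture is known, so Lemma~\ref{lem-attractor.coh} cannot be applied there with a smooth cover of a normal deperfection of $\Fl_w$, and the relevant cohomological vanishing must instead be verified directly on the Witt vector Demazure resolution.
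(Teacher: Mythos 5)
Your equal--characteristic treatment matches the paper's in substance: define the deperfected Demazure scheme as the (finite-level) iterated $\mathbb{P}^1$-bundle via $L^n_p$, use the composite $\mathbb{G}_m \to L^n_p\mathbb{G}_m \to L^n_p\mathcal{T} \to L^n_p\mathcal{I}$, take the target to be the $p$-closed deperfection attached to $k(D'_{\dot{w}})$ by Proposition~\ref{prop-deperfection-construction}, and observe that this is the seminormalization of the affine Schubert variety of \cite{PappasRapoport:Twisted}. The routing through $\Fl_w^{\can}$ followed by seminormalization is a mildly roundabout way to arrive at the same object, but it is fine.

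The mixed--characteristic case is where your proposal diverges from the paper and contains a genuine gap. You assert that $D'_{\dot{w}} := L^n_p\mathcal{P}_1 \times^{L^n_p\mathcal{I}} \cdots \times^{L^n_p\mathcal{I}} L^n_p\mathcal{P}_n/L^n_p\mathcal{I}$ is a smooth projective iterated $\mathbb{P}^1$-bundle deperfecting $D_{\dot{w}}$. The paper explicitly rejects this construction in mixed characteristic, citing non-reduced kernels for the relevant Greenberg functor maps, and instead \emph{defines} $D_{\dot{w}}^{\dep}$ via Proposition~\ref{prop-deperfection-construction} applied to $k(L^n_p\mathcal{I}/H)$, where $H$ is the unique reduced closed subgroup whose perfection is the stabilizer of an open-orbit point; the $L^n_p\mathcal{I}$-action on the resulting normal deperfection is then furnished by \cite[Proposition~3.3]{AGLR:Local}, which extends the open-orbit action precisely because the scheme is normal. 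You offer no argument for smoothness of your $D'_{\dot{w}}$, and the paper later explicitly warns in the proof of Theorem~\ref{prop-equiv-res} that ``it is possible that $D_{\dot{w}}^{\dep}$ is not smooth'' --- this is exactly why Lemma~\ref{lem-attractor.coh} cannot be applied as stated and must be re-proved by induction on the length of $\dot{w}$ in mixed characteristic.

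Your final paragraph has the diagnosis inverted. You say the obstruction to applying Lemma~\ref{lem-attractor.coh} is that the \emph{target} deperfection ``need not be normal outside the range where Zhu's conjecture is known.'' In fact the theorem immediately preceding this one shows that \emph{every} $p$-closed deperfection of a perfect Schubert scheme is normal, unconditionally; what remains open in Zhu's conjecture is the Cohen--Macaulay property of the canonical deperfection, which is irrelevant here. The genuine obstruction is that the \emph{source} $D_{\dot{w}}^{\dep}$ may fail to be smooth in mixed characteristic, so the hypothesis on $\tilde X$ in Lemma~\ref{lem-attractor.coh} is the one at risk. Finally, your proposed mechanism for putting a $\mathbb{G}_m$-action on the target in mixed characteristic --- descending along $\pi'_{\dot{w}}$ using $\pi'_{\dot{w},*}\mathcal{O} \cong \mathcal{O}$ and separatedness --- is not what the paper does and is more delicate than you acknowledge; the paper sidesteps descent entirely by using normality of \emph{both} deperfections to extend the $L^n_p\mathcal{I}$-action from the open orbit, after which equivariance of $\pi_{\dot w}^{\dep}$ follows from birationality and $L^n\mathcal{I}$-equivariance of the perfect map.
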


\begin{proof}
First assume $F$ has characteristic $p$. Then $D_{\dot{w}}$ has a canonical deperfection $$D_{\dot{w}}^{\dep} : = L^+\mathcal{P}_{1,p} \times^{L^+_p \mathcal{I}} \cdots \times^{L^+_p \mathcal{I}} L^+\mathcal{P}_{n,p} / L^+_p \mathcal{I}.$$ This is a smooth, projective, iterated $\mathbb{P}^1$-bundle. By considering the birational map $\pi_{\dot{w}}$, we can view $k(D_{\dot{w}}^{\dep})$ as a subfield of $k(\Fl_{w})$. By Proposition \ref{prop-deperfection-construction}, this gives us a particular deperfection $\Fl_{w}^{\dep}$ of $\Fl_{w}$, equipped with a birational map \begin{equation} \label{eqn-dem-dep} \pi_{\dot{w}}^{\dep} \colon D_{\dot{w}}^{\dep} \rightarrow \Fl_{w}^{\dep} \end{equation} which is a deperfection of \eqref{eqn-Dem-res}. In fact, $\Fl_{w}^{\dep}$ is the seminormalization of the usual afine Schubert variety associated to $w$ as in \cite{PappasRapoport:Twisted}. Moreover, $\pi_{\dot{w}}^{\dep}$ is $L_p^+ \mathcal{I}$-equivariant so it is also $\mathbb{G}_m$-equivariant for the action induced by \begin{equation} \label{gm-action-composition} \mathbb{G}_m \rightarrow L_p^+ \mathbb{G}_m \xrightarrow{L_p^+ \lambda} L_p^+ \mathcal{T} \rightarrow L^+_p \mathcal{I}.\end{equation}

If $F$ has characteristic zero, the maps ${L^+_p \mathcal{I}} \rightarrow L^+\mathcal{P}_{i,p}$ have non-reduced kernels, so we cannot define $D_{\dot{w}}^{\dep}$ as above (the following method also works if $F$ has characteristic $p$). Instead, let $n$ be an integer large enough so that $L^+\mathcal{I}$ acts on $D_{\dot w}$ and $\Fl_{w}$ through the quotient $L^n\mathcal{I}$. Let $H \subset L^n_p\mathcal{I}$ be the unique reduced closed subgroup whose perfection is the stabilizer of a chosen point in the open orbit in $D_{\dot{w}}$ (and hence also in $\Fl_{w}$ because $\pi_{\dot w}$ is an isomorphism over the open orbit). Then $H$ is a smooth affine $k$-group by \cite[Lemma A.26]{Zhu:Mixed}. Let $D_{\dot{w}}^{\dep}$ and $\Fl_{w}^{\dep}$ be the deperfections associated to $k(L^n_p\mathcal{I}/H)$ by Proposition \ref{prop-deperfection-construction}. Since these deperfections are normal they admit $L^n_p\mathcal{I}$-actions which extend the action over the open orbit, cf. \cite[Proposition 3.3]{AGLR:Local}. The map $\pi_{\dot w}$ also deperfects to an $L^n_p\mathcal{I}$-equivariant map as in \eqref{eqn-dem-dep} because $\pi_{\dot w}$ is $L^n\mathcal{I}$-equivariant and birational. Then we get the desired $\mathbb{G}_m$-action from the composition \eqref{gm-action-composition}.
\end{proof}

\begin{remark}
We do \emph{not} assert that $\Fl_{w}^{\dep}$ is the canonical deperfection of $\Fl_{w}$ in \cite{AGLR:Local}, which is constructed from the quotient of $L_p^+ \mathcal{G}$ by the stabilizer of its action on $\Fl_{w}^\circ$. 
\end{remark}

\begin{theorem} \label{prop-equiv-res}
The resolution constructed in \eqref{eqn-dem-dep} satisfies all the conclusions of Lemma \ref{lem-X.normal.proj} and Lemma \ref{lem-attractor.coh}.
\end{theorem}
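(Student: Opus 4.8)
The plan is to verify each conclusion of Lemmas~\ref{lem-X.normal.proj} and~\ref{lem-attractor.coh} directly, with $X = \Fl_w^{\dep}$, $\tilde X = D_{\dot w}^{\dep}$ and $\pi = \pi_{\dot w}^{\dep}$. The formation of fixed points, attractors, the maps $q^+$, $\iota^0$, $\iota^+$, and of small \'etale sites all commutes with perfection, so each such statement is equivalent to the corresponding one for $\Fl_w$, $D_{\dot w}$, $\pi_{\dot w}$, and I will pass between the two freely. First I would check the hypotheses of Lemma~\ref{lem-X.normal.proj} for $\Fl_w^{\dep}$: it is projective by Proposition~\ref{prop-deperfection-construction}; it is normal by Lemmas~\ref{Aperf.Normal} and~\ref{lem-perf-normal}, being $p$-closed with perfection $\Fl_w$; and it is connected, since $D_{\dot w}$ is connected (an iterated $(\mathbb{P}^1)_{\perf}$-bundle over $\Spec k$ by Lemma~\ref{lem-Demazure.rational}(1)) and $\pi_{\dot w}$ is surjective. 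Hence Lemma~\ref{lem-X.normal.proj} applies verbatim. I would also record that $R(\pi_{\dot w}^{\dep})_!(\mathbb{F}_p) \cong \mathbb{F}_p[0]$: this is Lemma~\ref{lem-Demazure.rational}(1) together with invariance of \'etale cohomology under perfection.

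If $F$ has characteristic $p$ the remaining assertions are immediate, since then $D_{\dot w}^{\dep}$ is a smooth projective iterated $\mathbb{P}^1$-bundle, the $\mathbb{G}_m$-action makes $\pi_{\dot w}^{\dep}$ a $\mathbb{G}_m$-equivariant surjection, and $R(\pi_{\dot w}^{\dep})_!(\mathbb{F}_p) \cong \mathbb{F}_p[0]$; so Lemma~\ref{lem-attractor.coh} applies directly with $\tilde X = D_{\dot w}^{\dep}$. The substantive case is $F$ of characteristic zero, where $D_{\dot w}^{\dep}$ is only known to be normal. The proof of Lemma~\ref{lem-attractor.coh} uses smoothness of $\tilde X$ only twice: to assert that $\tilde X$ has a unique closed attractor, and that each $\tilde q^+_j$ is an affine bundle. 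I would re-establish both for $D_{\dot w}$ by hand, exploiting its iterated $(\mathbb{P}^1)_{\perf}$-bundle structure. Writing $D_{\dot w} = L^+\mathcal{P}_1 \times^{L^+\mathcal{I}} D'$ with $D' = D_{s_2 \cdots s_n}$, the projection $\rho \colon D_{\dot w} \to L^+\mathcal{P}_1/L^+\mathcal{I} \cong (\mathbb{P}^1)_{\perf}$ is $\mathbb{G}_m$-equivariant. On $(\mathbb{P}^1)_{\perf}$ the action is either trivial or has exactly two fixed points, with attractor $(\mathbb{A}^1)_{\perf} \sqcup \{\mathrm{pt}\}$ whose unique closed component is the repelling fixed point; the base point is $\lambda$-fixed, and the fiber of $\rho$ over any fixed point is a copy of $D_{s_2 \cdots s_n}$ carrying the action of a $W$-conjugate of $\lambda$. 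Inducting on $n$, with base case the explicit description of $(\mathbb{P}^1)_{\perf}$, I would prove: (a) $(D_{\dot w})^+$ has a unique component closed in $D_{\dot w}$, lying over the repelling (or unique) fixed point of the base, where the inductive hypothesis applies to the fiber, and $q^+$ restricts to an isomorphism on reduced loci along it; and (b) along every other component $q^+$ is the perfection of an affine bundle of strictly positive relative dimension, the positivity coming from the $(\mathbb{A}^1)_{\perf}$-factor contributed by the base attractor or from the inductive hypothesis on $D'$.

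Granting (a) and (b) for $D_{\dot w}$, equivalently for $D_{\dot w}^{\dep}$, I would run the proof of Lemma~\ref{lem-attractor.coh} with $\tilde X = D_{\dot w}^{\dep}$, drawing the two facts above from (a), (b) rather than from smoothness. For part~(1): (a) together with the argument of~\cite[Corollary 2.3.5]{CP:Constant} yields a unique $i_0 \in \pi_0((\Fl_w^{\dep})^+)$ with $(\Fl_w^{\dep})^+_{i_0}$ closed in $\Fl_w^{\dep}$, and Lemmas~\ref{lem-X.normal.proj} and~\ref{lem-attractor-props}(3) show $q^+_{i_0}$ is a universal homeomorphism. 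For part~(2): the excision-and-filtration argument of~\emph{loc.~cit.}\ places $Rq^+_{i!}(\mathbb{F}_p)$ in the triangulated subcategory generated by the objects $R\pi_! \circ \tilde\iota_! \circ R\tilde q^+_{j!}(\mathbb{F}_p)$, using $R(\pi_{\dot w}^{\dep})_!(\mathbb{F}_p) \cong \mathbb{F}_p[0]$; and for $i \neq i_0$ each $R\tilde q^+_{j!}(\mathbb{F}_p)$ vanishes, since by (b) the map $\tilde q^+_j$ is the perfection of an affine bundle of positive rank and $R\Gamma_c((\mathbb{A}^n)_{\perf}, \mathbb{F}_p) = R\Gamma_c(\mathbb{A}^n, \mathbb{F}_p) = 0$ for $n > 0$ by~\cite[Lemma 2.1.1]{CP:Constant}.

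The hard part will be step (a)--(b) in characteristic zero: transporting the Bia\l ynicki--Birula picture from the smooth setting to the perfect Demazure scheme through its iterated $(\mathbb{P}^1)_{\perf}$-bundle structure, the delicate points being the analysis of attractors of a $\mathbb{G}_m$-equivariant fiber bundle over $(\mathbb{A}^1)_{\perf}$ and the bookkeeping of the $W$-conjugated cocharacter acting on the fiber over the repelling fixed point. Everything else is a hypothesis check or a verbatim repetition of the arguments of~\cite{CP:Constant}.
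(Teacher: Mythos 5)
Your proposal is structurally sound and mirrors the paper's strategy: check the hypotheses of Lemma~\ref{lem-X.normal.proj} directly, dispense with the equal-characteristic case immediately via smoothness of $D_{\dot w}^{\dep}$, and in mixed characteristic (where $D_{\dot w}^{\dep}$ may fail to be smooth) re-establish by hand exactly the two facts that the proof of Lemma~\ref{lem-attractor.coh} extracts from smoothness of $\tilde X$ --- uniqueness of the closed attractor in $(D_{\dot w})^+$ and vanishing of $Rq_{!}^+(\mathbb{F}_p)$ on the other components --- then rerun that proof verbatim. The paper does precisely this, by induction on $\ell(w)$.

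The one genuine divergence is the \emph{direction} of the induction. You write $D_{\dot w} = L^+\mathcal{P}_1 \times^{L^+\mathcal{I}} D_{s_2\cdots s_n}$ and project onto the first factor, so the base is $(\mathbb{P}^1)_{\perf}$ and the inductive hypothesis is invoked on the \emph{fiber}. The paper instead writes $D_{\dot w} = D_{\dot t}\times^{L^+\mathcal{I}} D_{\dot s_n}$ with $t = s_1\cdots s_{n-1}$ and projects onto $D_{\dot t}$, so the inductive hypothesis applies to the \emph{base} and the fiber is just $(\mathbb{P}^1)_{\perf}$. The paper's orientation is cleaner at exactly the spot you flag as hard: with a $(\mathbb{P}^1)_{\perf}$-fiber, the preimage under $\pi^+$ of each attractor component $\tilde Y_i^+\subset (D_{\dot t})^+$ decomposes immediately into one component mapping isomorphically to $\tilde Y_i^+$ and one that is an $(\mathbb{A}^1)_{\perf}$-bundle over it (or a single component if the last factor is $\mathbb{G}_m$-fixed), with no need to track a conjugated cocharacter on a higher-dimensional fiber. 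Your version must analyze a $D'$-bundle over $(\mathbb{P}^1)_{\perf}$: the $\mathbb{G}_m$-action on the fiber over each of the two base fixed points is a distinct $W$-conjugate of $\lambda$, and the component of $(D_{\dot w})^+$ sitting over the open stratum $(\mathbb{A}^1)_{\perf}\subset (\mathbb{P}^1)_{\perf}^+$ is not simply a product of $(\mathbb{A}^1)_{\perf}$ with an attractor of $D'$, so identifying it as an affine bundle of positive rank requires a separate argument. None of this is an obstruction --- the combinatorics is symmetric and would work out --- but it is extra bookkeeping that the paper's right-to-left peeling makes unnecessary. You correctly identify this as the delicate step; the paper's choice of factorization is precisely what dissolves it.

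One minor note: the paper's statement of condition (2) for non-closed attractors contains a typo ($R\tilde q^+_{i!}(\mathbb{F}_p)\cong\mathbb{F}_p[0]$ should read $=0$, as in Lemma~\ref{lem-attractor.coh} and in the body of the paper's argument); your proposal has the correct vanishing.
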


\begin{proof}
If $F$ has characteristic $p$, \eqref{eqn-dem-dep} satisfies all the hypotheses of these lemmas. Here we use topological invariance of the \'etale site \cite[04DY]{stacks-project} which implies that $R\pi_{\dot{w} ,!}^{\dep}(\mathbb{F}_p) \cong \mathbb{F}_p[0]$ can be checked after passing to perfections, so it follows from Lemma \ref{lem-Demazure.rational}. 

If $F$ has characteristic zero, \eqref{eqn-dem-dep} satisfies all the hypotheses of Lemma \ref{lem-X.normal.proj}, but it is possible that $D_{\dot{w}}^{\dep}$ is not smooth. However, the proof of Lemma \ref{lem-attractor.coh} applies verbatim provided that we verify:

\begin{enumerate}
    \item There is a unique closed attractor in $(D_{\dot{w}}^{\dep})^+$.
    \item For each non-closed attractor $\tilde{X}_i^+ \subset (D_{\dot{w}}^{\dep})^+$, the map $\tilde{q}_i^+ \colon \tilde{X}_i^+ \rightarrow \tilde{X}_i^0$ satisfies $R\tilde q_{i!}^+ (\mathbb{F}_p) \cong \mathbb{F}_p[0].$
\end{enumerate}
Both of these properties may be checked after passing to the $(\mathbb{G}_m)_{\perf}$-action on $D_{\dot{w}}$. We proceed by induction on the length of $w$. We may assume $w \in W_{\af}$. Write $\dot{w} = s_1 \ldots s_n$. Then $D_{\dot{s_i}} \cong (\mathbb{P}^1)_{\perf}$ and we have a decomposition $D_{\dot{s_i}} = \Spec(k) \sqcup \mathbb{A}^1_{\perf}$ where $\Spec(k)$ is the $k$-point corresponding to the identity coset $e \in \mathcal{P}_{s_i}(\mathcal{O}_F)/ \mathcal{I}(\mathcal{O}_F)$ and $\mathbb{A}^1_{\perf}(k) = \mathcal{I}(\mathcal{O}_F) s_i \mathcal{I}(\mathcal{O}_F)/ \mathcal{I}(\mathcal{O}_F)$. We always have $\{e\}$, $\{s_i\} \in (D_{\dot{s}_i})^0$. There are three possibilities, depending on whether $(\mathbb{G}_m)_{\perf}$ fixes, attracts, or repels the affine root group corresponding to $s_i$, respectively:
\begin{itemize}
    \item We have $(D_{\dot{s}_i})^0 = D_{\dot{s}_i}$, or
    \item $(\mathbb{G}_m)_{\perf}$ attracts $\mathbb{A}^1_{\perf} - \{s_i\}$ toward $\{s_i\}$, or
    \item $(\mathbb{G}_m)_{\perf}$ repels $\mathbb{A}^1_{\perf} - \{s_i\}$ toward $\{e\}$.
\end{itemize}
In the last two cases, one attractor is a point and the other is an $(\mathbb{A}^1)_{\perf}$-bundle over its fixed point.
Since $R\Gamma_c(\mathbb{A}^1_{\perf}, \mathbb{F}_p) =0$, this implies that (1) and (2) are satisfied when $n=1$. For the inductive step, let $t= s_1 \cdots s_{n-1}$ and let
$\pi \colon D_{\dot{w}} = D_{\dot{t}} \times^{L^+\mathcal{I}} D_{\dot{s}_n} \rightarrow D_{\dot{t}}$ be the projection. If $(D_{\dot{s}_n})^0 = D_{\dot{s}_n}$ then $\pi^{-1}$ induces a bijection between $\pi_0((D_{\dot{t}})^0)$ and $\pi_0((D_{\dot{w}})^0)$. Furthermore, in this case $\pi$ identifies fibers of the map $(D_{\dot{w}})^+ \rightarrow (D_{\dot{w}})^0$ with fibers of the  map $(D_{\dot{t}})^+ \rightarrow (D_{\dot{t}})^0$. Thus, by induction we are done in the first case.

In the other two cases, for each connected component $\tilde{Y}_i^+ \subset (D_{\dot{t}})^+$, $\pi^{-1}(\tilde{Y}_i^+)$ is a union of two connected components $\tilde{X}_{i1}^+$, $\tilde{X}_{i2}^+$ of $(D_{\dot{w}})^+$. The fixed points $\tilde{X}_{i1}^0$ and $\tilde{X}_{i2}^0$ each map isomorphically under $\pi$ onto $\tilde{Y}_i^0$. One of the two connected components, say $\tilde{X}_{i1}^+$, maps isomorphically under $\pi$ onto its image $\tilde{Y}_i^+$ in $(D_{\dot{t}})^+$. The other connected component $\tilde{X}_{i2}^+$ has the structure of an $(\mathbb{A}^1)_{\perf}$-bundle over $\tilde{Y}_i^+$ coming from the non-closed component of  $(D_{\dot{s}_n})^+$. The unique closed attractor in $D_{\dot{w}}$ is the connected component $\tilde{X}_{i_01}^+$ corresponding to the unique closed attractor $\tilde{Y}_{i_0}^+ \subset (D_{\dot{t}})^+$. All other $\tilde{X}_{i1}$ satisfy $Rq_{i1, !}^+(\mathbb{F}_p) = 0$ by induction. Finally, $Rq_{i2, !}^+(\mathbb{F}_p) = 0$ for all $i$ because the map $q_{i2}^+$ factors through an $(\mathbb{A}^1)_{\perf}$-bundle (by forgetting the last factor  $D_{\dot{s}_n}$), cf.~the last part of the proof of Lemma \ref{lem-attractor.coh}. 
\end{proof}

\begin{remark}
The induction in the proof of Theorem \ref{prop-equiv-res} can be used to show that $D_{\dot{w}}$ has a filtrable decomposition by the connected components of $D_{\dot{w}}^+$ without appealing to the existence of a $\mathbb{G}_m$-equivariant deperfection or \cite{BB:Properties}. Moreover, the induction also applies in equal characteristic. Thus, in equal characteristic there are two proofs of Theorem \ref{prop-equiv-res} (and hence of the main results in this paper), one which appeals to abstract generalities on Bia\l ynicki-Birula maps, and one which uses the explicit nature of $D_{\dot{w}}$ as much as possible. 
\end{remark}

\subsubsection{Description of the attractors}
Since the partial order on a Coxeter group is directed, we may write
$\Fl_{\mathcal{G}} = \colim \Fl_w,$ where the colimit is over $w \in {}_{\mathbf{f}}  W ^\mathbf{f}$ and the transition maps are closed immersions. The formation of fixed points and attractors is compatible with filtered colimits of closed immersions \cite[Theorem 2.1]{HainesRicharz:Test}. Thus,
$(\Fl_G)^0 = \colim (\Fl_w)^0$ and $(\Fl_G)^+ = \colim (\Fl_w)^+.$
Here $(\Fl_G)^0$ is a closed sub-ind-scheme of $\Fl_{\mathcal{G}}$ and $(\Fl_G)^+$ is a disjoint union of locally closed sub-ind-schemes. The natural map $(\Fl_G)^+ \rightarrow (\Fl_G)^0$ induces a bijection $\pi_0((\Fl_G)^+) \cong \pi_0((\Fl_G)^0)$.

By the definition of $\mathcal{M}:=\mathcal{G}^0$, the closed immersion $\Fl_{\mathcal{M}} \rightarrow \Fl_{\mathcal{G}}$ factors through $(\Fl_{\mathcal{G}})^0$.  By the proof of \cite[Theorem 5.2]{AGLR:Local}, the map $\Fl_{\mathcal{M}} \rightarrow (\Fl_{\mathcal{G}})^0$ identifies $\Fl_{\mathcal{M}}$ with a disjoint union of connected components of $(\Fl_{\mathcal{G}})^0$ (see also \cite[Proposition 4.7, Lemma 4.11]{HainesRicharz:Test}). Furthermore, by \cite[Theorem 5.2]{AGLR:Local}, the connected components of $(\Fl_{\mathcal{G}})^0$ can be described as follows. Let $W_{M}$ be the Iwahori--Weyl group of $M$ and let $W_{M, \af}$ be its affine Weyl group. Then the inclusion $\pi_0(\Fl_{\mathcal{M}}) \hookrightarrow \pi_0((\Fl_{\mathcal{G}})^0)$ identifies with the inclusion $$W_{M, \af} \backslash W_M   \hookrightarrow W_{M,\af} \backslash W / W_{\mathbf{f}}.$$ 

For $c \in W_{M,\af} \backslash W / W_{\mathbf{f}}$, let $S_c$ be the corresponding connected component\footnote{Recall that we assume $k$ is algebraically closed, so $S_c$ is in fact geometrically connected.} of $(\Fl_{\mathcal{G}})^+$. This is a locally closed sub-ind-scheme of $\Fl_{\mathcal{G}}$. Then
$$\Fl_{\mathcal{G}} = \bigsqcup_{c \in W_{M,\af} \backslash W / W_{\mathbf{f}}} S_c.$$
The irreducible components of the $S_c \cap \Fl_w$ generalize\footnote{In \cite[Theorem 3.2]{MirkovicVilonen:Geometric}, the group $G$ is split and $M$ is a maximal torus.} the \textit{Mirkovi\'c--Vilonen cycles} in \cite[Theorem 3.2]{MirkovicVilonen:Geometric}.
The $k$-points of $S_c$ can be described as follows. Let $M_{\SC}$ be the simply connected cover of the derived group of $M$ and let $P_{\SC} = M_{\SC} \ltimes U$. The following lemma also appears in the proof of \cite[Theorem 5.2]{AGLR:Local}.

\begin{lem} \label{lem:semi-infinite-points}
For $c \in W_{M,\af} \backslash W / W_{\mathbf{f}}$, let $\tilde{c} \in \Fl_{\mathcal{G}}(k)$ be the image of a representative under the embedding $W / W_{\mathbf{f}} \hookrightarrow \Fl_{\mathcal{G}}(k)$. Then
$S_c(k) = P_{\SC}(F) \cdot \tilde{c}.$
\end{lem}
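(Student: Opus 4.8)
The plan is to prove the two inclusions $P_{\SC}(F)\cdot\tilde c\subseteq S_c(k)$ and $S_c(k)\subseteq P_{\SC}(F)\cdot\tilde c$ separately, the second by reducing via a left translation to the ``neutral component'' case, which is treated in \cite[Theorem 5.2]{AGLR:Local}. First I would record the setup. Since $k=\overline{k}$, Lemma~\ref{lem-k-points} gives $\Fl_{\mathcal{G}}(k)=G(F)/K$, and because $\lambda(t)$ lands in $\mathcal{T}(\mathcal{O}_F)\subseteq K$ via the Teichm\"uller lift, the $(\mathbb{G}_m)_{\perf}$-action on $k$-points is $gK\mapsto\lambda(t)gK$; note also $\operatorname{im}(\lambda)\subseteq Z(M)$ since $M=G^0$. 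Each $\Fl_v$ is $(\mathbb{G}_m)_{\perf}$-stable (as $L^+\mathcal{I}\subseteq L^+\mathcal{G}$), and by the $\mathbb{G}_m$-equivariant deperfection \eqref{eqn-dem-dep} together with Theorem~\ref{prop-equiv-res} it is the perfection of a projective $k$-scheme carrying a compatible $\mathbb{G}_m$-action; hence every orbit map extends over $0$, so $\iota^+$ is bijective on $k$-points. Thus $\Fl_{\mathcal{G}}(k)=\bigsqcup_c S_c(k)$, and writing $S_c^0:=q^+(S_c)$ there is a retraction $r\colon\Fl_{\mathcal{G}}(k)\to(\Fl_{\mathcal{G}})^0(k)$, $x\mapsto\lim_{t\to0}\lambda(t)x$, with $S_c(k)=r^{-1}(S_c^0(k))$.

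For $P_{\SC}(F)\cdot\tilde c\subseteq S_c(k)$, pick a lift $\dot w\in N_G(S)(F)$ of a representative of $c$. Then $\dot w^{-1}\lambda(t)\dot w$ is again the value of a cocharacter of $T$, hence lies in $\mathcal{T}(\mathcal{O}_F)\subseteq K$, so $\lambda(t)\dot wK=\dot wK$ and $\tilde c$ is $(\mathbb{G}_m)_{\perf}$-fixed; that $\tilde c$ lies in $S_c$ rather than another component is built into the identification $\pi_0((\Fl_{\mathcal{G}})^0)=W_{M,\af}\backslash W/W_{\mathbf{f}}$ of \cite[Theorem 5.2]{AGLR:Local}, and fixed points are attractors. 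Next, $U(F)$ stabilizes $S_c$: for $u\in U(F)$ and $x\in S_c(k)$ one has $\lambda(t)\cdot(ux)=(\lambda(t)u\lambda(t)^{-1})\cdot(\lambda(t)\cdot x)$, and since $U=G^+$ is the attractor the first factor tends to $1$, so $r(ux)=r(x)$. And $M_{\SC}(F)$ stabilizes $S_c$: its image in $M(F)$ commutes with the $(\mathbb{G}_m)_{\perf}$-action (because $\operatorname{im}(\lambda)\subseteq Z(M)$), hence preserves $(\Fl_{\mathcal{G}})^+$, and it preserves each connected component because $M_{\SC}(F)$ is generated by its parahoric subgroups (the Kottwitz homomorphism of $M_{\SC}$ being trivial), each of which maps into the positive loop group of a \emph{connected} parahoric group scheme of $M$. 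Since $P_{\SC}(F)=M_{\SC}(F)U(F)$, this gives the inclusion; one also checks it is independent of the chosen representative, as changing $w$ by $W_{M,\af}$ multiplies $\tilde c$ on the left by an element lifting into $M_{\SC}(F)$.

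For $S_c(k)\subseteq P_{\SC}(F)\cdot\tilde c$, I would use that the left translation $L_{\dot w^{-1}}$ intertwines the $\lambda$-action with the action of the conjugate cocharacter $\lambda':=\dot w^{-1}\lambda\dot w$ and carries $S_c$ onto the $\lambda'$-attractor component through the base point $e=\dot w^{-1}\tilde c$. Replacing $(M,P,\lambda)$ by $(\dot w^{-1}M\dot w,\ \dot w^{-1}P\dot w,\ \lambda')$ we may therefore assume $\tilde c=e$ and that $S_c$ is the attractor component through $e$; by \cite[Theorem 5.2]{AGLR:Local} this is the \emph{neutral} connected component of $\Fl_{\mathcal{P}}$ (with $\mathcal{P}=\mathcal{G}^+$) inside $(\Fl_{\mathcal{G}})^+$. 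Hence every $x\in S_c(k)$ is of the form $p'K$ with $p'\in U(F)\cdot M(F)_1$. By Bruhat--Tits theory $M(F)_1=\operatorname{im}(M_{\SC}(F))\cdot\mathcal{M}(\mathcal{O}_F)$ (from generation of $M(F)$ by $\operatorname{im}(M_{\SC}(F))$ and $T(F)$, the inclusion $\mathcal{T}(\mathcal{O}_F)\subseteq\mathcal{M}(\mathcal{O}_F)$, and triviality of $\kappa$ on $\operatorname{im}(M_{\SC}(F))$), so after absorbing $\mathcal{M}(\mathcal{O}_F)\subseteq K$ we get $x=p''K$ with $p''\in U(F)\cdot\operatorname{im}(M_{\SC}(F))=P_{\SC}(F)$. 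Undoing the translation (conjugation by $\dot w$ carries $P_{\SC}$ for the conjugated Levi back to $P_{\SC}$) then yields $x\in P_{\SC}(F)\cdot\tilde c$.

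The main obstacle, I expect, is the Bruhat--Tits input $M(F)_1=\operatorname{im}(M_{\SC}(F))\cdot\mathcal{M}(\mathcal{O}_F)$ together with the identification of the connected components of $(\Fl_{\mathcal{G}})^0$ with (components of) partial affine flag varieties of $M$: this is exactly where one invokes the Kottwitz homomorphism and the results of \cite{AGLR:Local} (indeed the lemma is essentially extracted from the proof of Theorem~5.2 there). By contrast, the remaining steps — that $\tilde c$ is fixed, that $P_{\SC}(F)$ stabilizes $S_c$, and the left-translation reduction — are formal, relying only on the $LG$-action being a morphism, on $\operatorname{im}(\lambda)\subseteq Z(M)$, and on $\lambda$ attracting $U$; and the single place where perfectness intervenes, namely that every $k$-point flows to a limit, is handled by the $\mathbb{G}_m$-equivariant deperfection of Theorem~\ref{prop-equiv-res}.
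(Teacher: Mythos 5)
Your argument is correct, and it is exactly the route the paper intends: the paper's proof is the one-line remark that the statement ``follows from Lemma~\ref{lem-k-points} and the definitions in \ref{sect-parabolics},'' with the preceding sentence noting that the lemma also appears inside the proof of \cite[Theorem 5.2]{AGLR:Local}. You have simply unwound that citation into a complete argument — the fixedness of $\tilde c$, the $U(F)$- and $\operatorname{im}(M_{\SC}(F))$-stability of $S_c$ via the $Z(M)$-centrality of $\lambda$ and the contracting dynamics on $LU$, the reduction by left translation to the neutral component, and the Bruhat--Tits identity $M(F)_1=\operatorname{im}(M_{\SC}(F))\cdot\mathcal{M}(\mathcal{O}_F)$ — so this is a correct expansion rather than a different proof.
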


\begin{proof}
This follows from Lemma \ref{lem-k-points} and the definitions in \ref{sect-parabolics}.
\end{proof}

If $c \in W_{M, \af} \backslash W_M = \pi_1(M)_I$, let $(\Fl_{\mathcal{M}})^c$ be the corresponding connected component of $\Fl_{\mathcal{M}}$. Let $$\pi_c \colon S_c \rightarrow (\Fl_{\mathcal{M}})^c$$ be the resulting map obtained by restriction from the map $(\Fl_{\mathcal{G}})^+ \rightarrow (\Fl_{\mathcal{G}})^0$.

\begin{lem} \label{lem-satake-k-points}
Let $m \in \Fl_{\mathcal{M}}(k)$ and let $\tilde{m} \in M(F)$ be a representative. If $m$ lies in $(\Fl_{\mathcal{M}})^c$, then 
$$\pi_c^{-1}(m) = \{[\tilde{m} \cdot u] \: : \: u \in U(F)/(U(F) \cap K)\} \subset G(F)/K.$$
In the above, $[\tilde{m} \cdot u]$ is the equivalence class of $\tilde{m} \cdot u$ in $\Fl_{\mathcal{G}}(k)$ for $u \in U(F)$, and $[\tilde{m} \cdot u_1] = [\tilde{m} \cdot u_2]$ if and only if $u_1 \cdot (U(F) \cap K) = u_2 \cdot (U(F) \cap K)$.
\end{lem}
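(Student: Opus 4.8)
The plan is to compute the fiber $\pi_c^{-1}(m)$ directly on $k$-points, using the concrete description of $S_c(k)$ from Lemma~\ref{lem:semi-infinite-points} together with the identification $\Fl_{\mathcal{G}}(k) = G(F)/K$ from Lemma~\ref{lem-k-points}. First I would observe that the map $\pi_c \colon S_c \to (\Fl_{\mathcal{M}})^c$ is the restriction of the Bia\l ynicki--Birula map $q^+ \colon (\Fl_{\mathcal{G}})^+ \to (\Fl_{\mathcal{G}})^0$, which on points sends $x \in S_c(k)$ to $\lim_{t \to 0}\lambda(t)\cdot x \cdot \lambda(t)^{-1}$. By Lemma~\ref{lem:semi-infinite-points}, every point of $S_c(k)$ can be written as $[g]\cdot \tilde c$ with $g \in P_{\SC}(F) = M_{\SC}(F)\ltimes U(F)$, equivalently (after absorbing the $M_{\SC}(F)$-part and the representative $\tilde c$ into a single element of $M(F)$) as $[\tilde n \cdot u]$ with $\tilde n \in M(F)$ and $u \in U(F)$. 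Since $\lambda$ is central in $M$ and $U$ is the attractor, conjugation by $\lambda(t)$ fixes $\tilde n$ and contracts $u$ to $1$ as $t \to 0$; hence $\pi_c([\tilde n \cdot u]) = [\tilde n] \in (\Fl_{\mathcal{M}})^c$. This shows $\pi_c$ is, on points, the ``forget the unipotent part'' map, and in particular $\pi_c^{-1}(m)$ consists exactly of those classes $[\tilde m \cdot u]$, $u \in U(F)$, where $\tilde m$ is a fixed representative of $m$.

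Next I would pin down the indexing set, i.e.\ show $[\tilde m u_1] = [\tilde m u_2]$ in $G(F)/K$ if and only if $u_1(U(F)\cap K) = u_2(U(F)\cap K)$. The ``if'' direction is immediate since $U(F)\cap K \subset K$. For ``only if'', $[\tilde m u_1] = [\tilde m u_2]$ means $u_2^{-1}u_1 \in K$; since $u_2^{-1}u_1 \in U(F)$, we get $u_2^{-1}u_1 \in U(F)\cap K$, as desired. Here I should be slightly careful about the ambiguity in the choice of representative $\tilde m$: two representatives differ by an element of $M(F)\cap K = \mathcal{M}(\mathcal{O}_F)$ (Lemma~\ref{lem-parahoric-levi}(2)), and such an element normalizes $U$ and preserves $U(F)\cap K$, so the set $\{[\tilde m u] : u \in U(F)/(U(F)\cap K)\}$ and its parametrization are independent of this choice.

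The one genuinely non-formal point — and the step I expect to be the main obstacle — is the \emph{surjectivity}, i.e.\ that every point of $\pi_c^{-1}(m) \subset S_c(k)$ really is of the form $[\tilde m u]$ for some $u \in U(F)$, as opposed to $[\tilde n u]$ for some \emph{other} representative $\tilde n$ of a point $\pi_c$-equivalent to $m$ but lying in a different $M(F)\cap K$-coset. Concretely: given $x \in S_c(k)$ with $\pi_c(x) = m$, Lemma~\ref{lem:semi-infinite-points} and the Iwahori-type factorization give $x = [\tilde n u]$ with $\tilde n \in M(F)$, $u \in U(F)$, and by the computation above $[\tilde n] = [\tilde m]$ in $\Fl_{\mathcal{M}}(k)$, i.e.\ $\tilde m^{-1}\tilde n \in M(F)\cap K$. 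Writing $\tilde n = \tilde m k_M$ with $k_M \in M(F)\cap K$ and using that $k_M$ normalizes $U(F)$, one has $\tilde n u = \tilde m (k_M u k_M^{-1}) k_M = \tilde m u' k_M$ with $u' \in U(F)$, so $[\tilde n u] = [\tilde m u']$ since $k_M \in K$. This closes the loop. I would double-check that passing from $P_{\SC}(F)\cdot\tilde c$ to the ``$\tilde m \cdot U(F)$'' description is legitimate — the point is that the $M_{\SC}(F)$-translates of $\tilde c$, together with the ambiguity in choosing $\tilde c$ versus $\tilde m$, exactly account for the $M(F)$-orbit structure, which is why only the $U(F)$-factor survives in the fiber. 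Assembling these observations gives the claimed equality $\pi_c^{-1}(m) = \{[\tilde m \cdot u] : u \in U(F)/(U(F)\cap K)\}$ with the stated identification of the index set.
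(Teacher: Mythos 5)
Your proof is correct and follows the same route the paper does: it derives the fiber description from Lemma~\ref{lem:semi-infinite-points} and the attractor characterization of $U$ \eqref{eqn-unipotent}, which is exactly what the paper's one-line proof cites. The extra care you take with the surjectivity step (rewriting $\tilde n u = \tilde m\,(k_M u k_M^{-1})\,k_M$ using that $M(F)\cap K$ normalizes $U(F)$) is precisely the detail the paper leaves implicit, and it is handled correctly.
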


\begin{proof}
This is a consequence of \eqref{eqn-unipotent} and Lemma \ref{lem:semi-infinite-points} (see also \cite[Lemma 4.11]{HainesRicharz:Test}).
\end{proof}

\subsubsection{Attractors over finite fields} In this subsection we assume that $k$ is finite, and we fix an element $w \in W_{\mathbf{f}} \backslash W^\sigma / W_{\mathbf{f}}$.
By \eqref{eqn-k-points}, this gives rise to a perfect Schubert scheme $\Fl_{w}$ defined over $k$ which admits a $k$-point. Also, fix a connected component $(\Fl_{\mathcal{M}})^c$ of $\Fl_{\mathcal{M}}$ which is defined over $k$ and admits a $k$-point (and is therefore geometrically connected). Such connected components of $\Fl_{\mathcal{M}}$ are indexed by $c \in \pi_1(M)_I^\sigma$.

The ind-schemes $(\Fl_{\mathcal{G}})^0$ and $(\Fl_{\mathcal{G}})^+$ are defined over $k$, as are the maps $\Fl_{\mathcal{M}} \rightarrow (\Fl_{\mathcal{G}})^0 \rightarrow \Fl_{\mathcal{G}}$. Let $S_c$ be the geometrically connected component of $(\Fl_{\mathcal{G}})^+$ which maps to $(\Fl_{\mathcal{M}})^c$ under the map $(\Fl_{\mathcal{G}})^+ \rightarrow (\Fl_{\mathcal{G}})^0$.

\begin{lem} \label{lem-geom-conn}
Let $c \in \pi_1(M)_I^\sigma$ and $w \in W_{\mathbf{f}} \backslash W^{\sigma} / W_{\mathbf{f}}$.
Then $(\Fl_{\mathcal{M}})^c \cap \Fl_{w}$ is geometrically connected.
\end{lem}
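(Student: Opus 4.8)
The plan is to realize $(\Fl_{\mathcal{M}})^c \cap \Fl_w$ as a union of perfect Schubert schemes of $\Fl_{\mathcal{M}}$ which all contain one fixed nonempty (irreducible) perfect Schubert scheme; connectedness then follows at once. Since geometric connectedness is unaffected by extension of the base field, and since forming $(\Fl_{\mathcal{M}})^c \cap \Fl_w$ commutes with base change, I may replace $k$ by $\overline{k}$, so that $c \in \pi_1(M)_I$ and $w \in {}_{\mathbf{f}} W ^{\mathbf{f}}$; it then suffices to prove that $(\Fl_{\mathcal{M}})^c \cap \Fl_w$ is connected, and we may assume it is nonempty.

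Write $Z := (\Fl_{\mathcal{M}})^c \cap \Fl_w$. First I would check that $Z$ is stable under the $L^+\mathcal{M}$-action on $\Fl_{\mathcal{M}}$. Indeed, $\Fl_w = \Fl_w(\mathbf{f}, \mathbf{f})$ is the closure of an $L^+\mathcal{G}$-orbit, hence $L^+\mathcal{G}$-stable, and therefore $L^+\mathcal{M}$-stable since $\mathcal{M} \subset \mathcal{G}$; and the component $(\Fl_{\mathcal{M}})^c$ is, under the identification $\pi_0(\Fl_{\mathcal{M}}) = W_{M, \af} \backslash W_M = \pi_1(M)_I$ of \S\ref{sect-shubert-schemes}, the union of those perfect Schubert cells of $\Fl_{\mathcal{M}}$ whose index maps to $c$ — a union of $L^+\mathcal{M}$-orbits, since $L^+\mathcal{M}$ is connected (as $\mathcal{M}$ is smooth with connected fibers by Lemma \ref{lem-parahoric-levi}), so that each cell, being a single orbit, is connected and hence lies in one component. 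As $Z$ is a closed subscheme of the finite-dimensional projective scheme $\Fl_w$, it is itself a perfect projective scheme, and so $Z_{\red}$ is a finite union $\bigcup_{v} \Fl_{v}(\mathbf{f}_M, \mathbf{f}_M)$ of perfect Schubert schemes of $\Fl_{\mathcal{M}}$, the indices $v$ forming a finite set $\Theta_c$ of double cosets lying over $c$ (here $\mathbf{f}_M$ denotes the facet of $M$ attached to $\mathbf{f}$, and the irreducible components of $Z_{\red}$ are $L^+\mathcal{M}$-stable because $L^+\mathcal{M}$ is connected).

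It remains to exhibit a common piece. Using the splitting $W_M = W_{M, \af} \rtimes \pi_1(M)_I$, the double cosets lying over $c$ are exactly those contained in the single $W_{M, \af}$-coset $W_{M, \af} \tau_c$, where $\tau_c$ is a length-zero lift of $c$. Since $W_{M, \af}$ has Bruhat minimum $e$ and right translation by the length-zero element $\tau_c$ preserves the Bruhat order, the coset $W_{M, \af} \tau_c$ has minimum $\tau_c$; consequently, among the perfect Schubert schemes of $\Fl_{\mathcal{M}}$ contained in $(\Fl_{\mathcal{M}})^c$ there is a smallest one, $\Fl_{v_0}(\mathbf{f}_M, \mathbf{f}_M)$, and by the closure relations (Lemma \ref{lem-closure}) it is contained in $\Fl_{v}(\mathbf{f}_M, \mathbf{f}_M)$ for every $v \in \Theta_c$. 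As each $\Fl_{v}(\mathbf{f}_M, \mathbf{f}_M)$ is irreducible — it is the closure of the irreducible perfect Schubert cell $\Fl_{v}^{\circ}(\mathbf{f}_M, \mathbf{f}_M)$ — and they all contain the fixed nonempty closed subscheme $\Fl_{v_0}(\mathbf{f}_M, \mathbf{f}_M)$, the union $Z_{\red} = \bigcup_{v \in \Theta_c} \Fl_{v}(\mathbf{f}_M, \mathbf{f}_M)$ is connected; hence $Z$ is connected, and $(\Fl_{\mathcal{M}})^c \cap \Fl_w$ is geometrically connected. I expect the only genuinely delicate point to be the combinatorial input that passing from $W_M$ to its double-coset quotient carries the minimum $\tau_c$ of the coset $W_{M, \af}\tau_c$ to the minimum of the corresponding family of perfect Schubert schemes — equivalently, that the smallest perfect Schubert scheme of the component $(\Fl_{\mathcal{M}})^c$ lies inside every other one — which is a purely order-theoretic statement about the Bruhat order on the Iwahori--Weyl group, implicit in \S\ref{sect-shubert-schemes} and \cite{Richarz:Schubert}.
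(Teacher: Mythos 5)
Your argument is correct and follows the same strategy as the paper: reduce to $\overline{k}$, observe that $Z := (\Fl_{\mathcal{M}})^c \cap \Fl_w$ is a closed $L^+\mathcal{M}$-stable subscheme of the single connected component $(\Fl_{\mathcal{M}})^c$, and conclude by exhibiting one nonempty closed piece common to all of its irreducible components. The only real difference --- and it is exactly where your acknowledged gap sits --- is the choice of common piece. The paper uses the \emph{minimal Iwahori-orbit} of $(\Fl_{\mathcal{M}})^c$, i.e.\ the single point indexed by the length-zero element $\tau_c$; this point lies in every Iwahori-orbit closure in the component because $\tau_c \leq v$ for all $v \in W_{M,\af}\tau_c$ and because passage to minimal right-coset representatives $v \mapsto v^{\mathbf{f}_M}$ is order-preserving on $W_M$. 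You instead want the \emph{minimal $L^+\mathcal{M}$-Schubert scheme} to lie in every other Schubert scheme of the component, i.e.\ that ${}_{\mathbf{f}_M}\tau_c^{\mathbf{f}_M} \leq {}_{\mathbf{f}_M}v^{\mathbf{f}_M}$ for all relevant $v$; this inequality of the min-max double-coset representatives of Lemma~\ref{lem-closure} is what you flag but leave open, and it does not follow from Bruhat generalities alone, since $v \mapsto {}_{\mathbf{f}_M}v^{\mathbf{f}_M}$ is not a priori order-preserving. The shortest fill is essentially the paper's observation: the base point $\tau_c$ lies in each $\Fl_v(\mathbf{f}_M, \mathbf{f}_M)$ over $c$ (identify it with an Iwahori Schubert scheme as at the end of the proof of Lemma~\ref{lem-Demazure.rational}), and since $\Fl_v(\mathbf{f}_M,\mathbf{f}_M)$ is closed and $L^+\mathcal{M}$-stable, it must contain the entire $L^+\mathcal{M}$-orbit closure of $\tau_c$, which is your $\Fl_{v_0}(\mathbf{f}_M,\mathbf{f}_M)$. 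With this step supplied, your argument is complete and coincides with the paper's.
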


\begin{proof}
Over $\overline{k}$, the intersection is a closed union of $L^+\mathcal{M}_{\mathcal{O}_{\breve{F}}}$-orbits inside a single connected component $(\Fl_{\mathcal{M}, \overline{k}})^c$ of $\Fl_{\mathcal{M}, \overline{k}}$. Every such union is connected because it contains the unique minimal Iwahori-orbit in $(\Fl_{\mathcal{M}, \overline{k}})^c$.
\end{proof}

Let $\pi_{c,w} \colon S_c \cap \Fl_{w} \rightarrow (\Fl_{\mathcal{M}})^c \cap \Fl_w$ be the restriction of the map $\pi_c \colon S_c \rightarrow (\Fl_{\mathcal{M}})^c$.

\begin{theorem} \label{thm-satake-k-points} Let $c \in \pi_1(M)_I^\sigma$ and $w \in W_{\mathbf{f}} \backslash W^{\sigma} / W_{\mathbf{f}}$. Let $m$ be a $k$-point of $(\Fl_{\mathcal{M}})^c \cap \Fl_w$ and let $\tilde m \in M(F)$ be a representative.

\noindent (1) At the level of $k$-points, we have
$$\pi_{c,w}^{-1}(m) = \Fl_w(k) \cap \{[\tilde m \cdot u] \: : \: u \in U(F)/(U(F) \cap K)\}.$$
In the above, $[\tilde{m} \cdot u]$ is the equivalence class of $\tilde{m} \cdot u$ in $\Fl_{\mathcal{G}}(k)$ for $u \in U(F)$, and $[\tilde{m} \cdot u_1] = [\tilde{m} \cdot u_2]$ if and only if $u_1 \cdot (U(F) \cap K) = u_2 \cdot (U(F) \cap K)$. \\
(2) Furthermore, we have
$$
R(\pi_{c,w})_{!}(\mathbb{F}_p)=
\begin{cases}
\mathbb{F}_p[0], & (\Fl_{\mathcal{M}, \overline{k}})^c \cap \Fl_{w, \overline{k}} \text{ is the unique closed attractor in } (\Fl_{w, \overline{k}})^+ \\
0, & \text{otherwise.}
\end{cases}$$
\end{theorem}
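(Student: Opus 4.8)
The plan is the following. Part~(1) is immediate from the point count already established. Since $m$ is a $k$-point of $(\Fl_{\mathcal{M}})^c$, Lemma~\ref{lem-k-points} applied to $\mathcal{M}$ provides a representative $\tilde{m} \in M(F)$, and Lemma~\ref{lem-satake-k-points} identifies $\pi_c^{-1}(m)$ with $\{[\tilde{m} \cdot u] \colon u \in U(F)/(U(F) \cap K)\} \subseteq \Fl_{\mathcal{G}}(k)$, together with the stated criterion for when two such classes coincide. Since $\pi_{c,w}$ is by definition the restriction of $\pi_c$ to $S_c \cap \Fl_w$ over $(\Fl_{\mathcal{M}})^c \cap \Fl_w$, and $\pi_c^{-1}(m)$ already lies in $S_c(k)$, one obtains $\pi_{c,w}^{-1}(m) = \pi_c^{-1}(m) \cap \Fl_w(k)$, which is the formula in~(1).

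For part~(2), I would first base change to $\overline{k}$. The source and target of $\pi_{c,w}$ are perfections of finite-type $k$-schemes (the target closed, the source locally closed in $\Fl_w$), the functor $R(-)_!$ commutes with base change along $\Spec(\overline{k}) \to \Spec(k)$, and whether $R(\pi_{c,w})_!(\mathbb{F}_p)$ equals $\mathbb{F}_p[0]$ or $0$ may be detected after this base change. Over $\overline{k}$ the goal is to recognize $\pi_{c,w,\overline{k}}$ as one of the maps $q^+_i \colon (\Fl_{w,\overline{k}})^+_i \to (\Fl_{w,\overline{k}})^0_i$ and then to invoke Lemma~\ref{lem-attractor.coh}. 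Indeed $\Fl_{w,\overline{k}}$ is normal by Lemma~\ref{lem-perf-normal}, and it is projective and connected (it is a perfect Schubert scheme), so Lemma~\ref{lem-X.normal.proj} applies; taking $\tilde{X} := D_{\dot{w},\overline{k}}$ together with the $\mathbb{G}_m$-equivariant surjection $\pi_{\dot{w}}$, which satisfies $R\pi_{\dot{w},!}(\mathbb{F}_p) = \mathbb{F}_p[0]$ by Lemma~\ref{lem-Demazure.rational}(1), Theorem~\ref{prop-equiv-res} guarantees that the conclusions of Lemma~\ref{lem-attractor.coh} hold for this resolution. Because $\Fl_{\mathcal{M}} \hookrightarrow (\Fl_{\mathcal{G}})^0$ is a union of connected components and the formation of attractors and fixed points is compatible with the closed immersion $\Fl_w \hookrightarrow \Fl_{\mathcal{G}}$ and with perfection, $(\Fl_{\mathcal{M},\overline{k}})^c \cap \Fl_{w,\overline{k}}$ is open and closed in $(\Fl_{w,\overline{k}})^0$, hence --- being connected by Lemma~\ref{lem-geom-conn} --- equals a single component $(\Fl_{w,\overline{k}})^0_i$; the piece $S_{c,\overline{k}} \cap \Fl_{w,\overline{k}}$ is then the component $(\Fl_{w,\overline{k}})^+_i$ mapping onto it under $q^+$ (using that $q^+$ is a bijection on $\pi_0$, Lemma~\ref{lem-attractor-props}(3)), and $\pi_{c,w,\overline{k}} = q^+_i$. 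Now Lemma~\ref{lem-attractor.coh}(1)--(2) gives $R(q^+_i)_!(\mathbb{F}_p) = \mathbb{F}_p[0]$ exactly when $(\Fl_{w,\overline{k}})^+_i$ is the unique closed connected component of $(\Fl_{w,\overline{k}})^+$ --- i.e., in the notation of the theorem, when $(\Fl_{\mathcal{M},\overline{k}})^c \cap \Fl_{w,\overline{k}}$ is the unique closed attractor --- and gives $0$ otherwise, which is the claim.

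The genuine content here has already been packaged in Theorem~\ref{prop-equiv-res}: in mixed characteristic the Witt vector Demazure resolution may fail to admit a \emph{smooth} $\mathbb{G}_m$-equivariant deperfection, yet it still satisfies the cohomological vanishing of Lemma~\ref{lem-attractor.coh}. Granting that, the only points requiring care are bookkeeping: that the $(\mathbb{G}_m)_{\perf}$-action used here (induced via $L^+\lambda$ and the $L^+\mathcal{I}$-action on $\Fl_{\mathcal{G}}$) is the same action for which $(\Fl_{\mathcal{G}})^0$, $(\Fl_{\mathcal{G}})^+$ and $\Fl_{\mathcal{M}}$ were defined in \S\ref{sect-parabolics}, and that attractors, fixed points and the relevant $\pi_0$'s are unchanged under perfection and under passage to the closed sub-ind-scheme $\Fl_w$. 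So the main --- though fairly mild --- obstacle is just to verify these compatibilities carefully enough to be confident that $\pi_{c,w}$ is indeed one of the maps $q^+_i$ to which Lemma~\ref{lem-attractor.coh} applies.
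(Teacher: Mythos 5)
Your proof is correct and follows essentially the same route as the paper: part~(1) via Lemma~\ref{lem-satake-k-points}, and part~(2) via base change to $\overline{k}$, identifying $\pi_{c,w}$ with a Bia\l ynicki-Birula map $q_i^+$, and then combining Theorem~\ref{prop-equiv-res} (which ensures the conclusions of Lemma~\ref{lem-attractor.coh} hold for the Demazure resolution) with the connectedness from Lemma~\ref{lem-geom-conn}. Your write-up supplies more of the bookkeeping explicitly (compatibility of attractors with perfection, with closed immersions, and the bijection on $\pi_0$), but the logical structure is the same as in the paper.
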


\begin{proof}
Lemma \ref{lem-satake-k-points} is valid if $k$ is finite, so this gives the description of $\pi_{c,w}^{-1}(m)$. To check the formula for $R(\pi_{c,w})_{!}(\mathbb{F}_p)$, we may base-change to $\overline{k}$. By Theorem \ref{prop-equiv-res}, we may then apply the conclusions of Lemma \ref{lem-attractor.coh}. It remains to see that $(\Fl_{\mathcal{M}, \overline{k}})^c \cap \Fl_{w, \overline{k}}$, which is a priori a disjoint union of connected components in $(\Fl_{w, \overline{k}})^0$, is connected. This follows from Lemma \ref{lem-geom-conn}.
\end{proof}

\begin{remark}
If $\mathbf{f}$ is a special vertex, $\Fl_{\mathcal{M}}(k) = (\Fl_{\mathcal{G}})^0(k)$ by the Iwasawa decomposition. If $\mathbf{f}$ is not very special, i.e, if $\mathbf{f}$ is not special over $\breve{F}$, then $\Fl_{\mathcal{M}}(\overline{k}) \subsetneq (\Fl_{\mathcal{G}})^0(\overline{k})$ in general. 
\end{remark}

\section{Applications to mod \texorpdfstring{$p$}{p} Hecke algebras} \label{sect-Hecke-alegbra}
\textit{In Section \ref{sect-Hecke-alegbra} we assume that $k = \mathbb{F}_q$ is a finite field of characteristic $p>0$.}
\subsection{Mod \texorpdfstring{$p$}{p} Hecke algebras}
\subsubsection{The function-sheaf dictionary}
In this subsection we recall the function-sheaf dictionary following \cite[Rapport]{SGA4.5}. Let $X$ be a separated scheme of finite-type over $k$. Let $\mathcal{F}$ be a constructible sheaf of $\mathbb{F}_p$-vector spaces on $X$ (we will only need the case where $\mathcal{F}$ is a constant sheaf). For each point $x \in X(k)$, the stalk $\mathcal{F}_x$ is a finite-dimensional representation of $\Gal(\overline{k}/k)$. Taking the trace of the action of the geometric Frobenius element $\gamma \in \Gal(\overline{k}/k)$ gives a value $\Tr(\gamma, \mathcal{F}_x) \in \mathbb{F}_p$. In this way, we get a function
\begin{equation} \label{eqn-function-sheaf} \mathcal{F}^{\Tr} \colon X(k) \rightarrow \mathbb{F}_p \quad x \mapsto \Tr(\gamma, \mathcal{F}_x).\end{equation}
On the other hand, if $X_{\overline{k}} = X \times \Spec(\overline{k})$ there is a natural action of $\Gal(\overline{k}/k)$ on the \'etale cohomology with compact support $H_c^i(X_{\overline{k}}, \mathcal{F})$. The function-sheaf dictionary is the following relationship between these actions.
\begin{lem} \label{lem-function-sheaf}
We have
$$\sum_{x \in X(k)} \Tr(\gamma, \mathcal{F}_x) = \sum_i (-1)^i \Tr(\gamma, H_c^i(X_{\overline{k}}, \mathcal{F})).$$ In particular, if $\mathcal{F} = \mathbb{F}_p$ is the constant sheaf,
$$|X(k)| \equiv  \sum_i (-1)^i \Tr(\gamma, H_c^i(X_{\overline{k}}, \mathbb{F}_p)) \pmod{p}.$$
\end{lem}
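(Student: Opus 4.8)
The statement is the classical Grothendieck–Lefschetz trace formula (the ``function-sheaf dictionary''), here in the mod $p$ setting, so the plan is to reduce it to the standard references rather than reprove it from scratch. The key point is that the Lefschetz trace formula
$$\sum_{x \in X(k)} \Tr(\gamma, \mathcal{F}_x) = \sum_i (-1)^i \Tr(\gamma, H_c^i(X_{\overline{k}}, \mathcal{F}))$$
is valid for constructible sheaves of $\Lambda$-modules for any commutative ring $\Lambda$ in which the characteristic of $k$ need not be invertible — in particular for $\Lambda = \mathbb{F}_p$ — because the proof in \cite[Rapport]{SGA4.5} only uses additivity of traces in distinguished triangles, proper base change, and dévissage, none of which require $\ell \neq p$. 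So the first step is to invoke \cite[Rapport, Th\'eor\`eme 3.2]{SGA4.5} (the general form valid over $\mathbb{Z}/n$, hence over $\mathbb{F}_p$) applied to $X$ and $\mathcal{F}$, using that $X$ is separated of finite type over the finite field $k$ and $\mathcal{F}$ is constructible; this yields the first displayed equality directly.

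For the second assertion, I would specialize to $\mathcal{F} = \underline{\mathbb{F}}_p$, the constant sheaf. Then for each $x \in X(k)$ the stalk $\mathcal{F}_x \cong \mathbb{F}_p$ carries the trivial Galois action, so $\Tr(\gamma, \mathcal{F}_x) = 1$ in $\mathbb{F}_p$, and hence $\sum_{x \in X(k)} \Tr(\gamma, \mathcal{F}_x)$ equals the class of $|X(k)|$ in $\mathbb{F}_p$, i.e.\ $|X(k)| \bmod p$. Combining with the first equality gives
$$|X(k)| \equiv \sum_i (-1)^i \Tr(\gamma, H_c^i(X_{\overline{k}}, \mathbb{F}_p)) \pmod p,$$
which is the claim. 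One should note the sum on the right is finite since $H_c^i(X_{\overline{k}}, \mathbb{F}_p)$ vanishes outside the range $0 \le i \le 2\dim X$ and is finite-dimensional over $\mathbb{F}_p$ by finiteness of \'etale cohomology with compact support for constructible torsion sheaves on finite-type $k$-schemes.

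The only genuine subtlety — and the step I would be most careful about — is confirming that the trace formula is genuinely characteristic-insensitive: the ``usual'' statement of Grothendieck's formula is often quoted for $\overline{\mathbb{Q}}_\ell$ or $\mathbb{Z}_\ell$-sheaves with $\ell \neq p$, but the torsion version over $\mathbb{Z}/n$ with $p \mid n$ holds without restriction, precisely because the argument is a formal dévissage (to constant sheaves on curves, then to $\mathbb{P}^1$ and $\mathbb{A}^1$) combined with additivity of the trace on cohomology, and never divides by anything. This is exactly the content of \cite[Rapport]{SGA4.5}; alternatively one can cite \cite[Tag 03UM]{stacks-project} or the discussion of the trace formula for torsion coefficients therein. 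Given the excerpt only needs the mod $p$ congruence for constant sheaves, invoking the general torsion trace formula and then setting $\mathcal{F} = \underline{\mathbb{F}}_p$ is the cleanest route, and no further computation is required.
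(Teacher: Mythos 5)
Your proposal is correct and follows the same route as the paper: both reduce to the Grothendieck--Lefschetz trace formula for torsion coefficients from \cite[Rapport]{SGA4.5} (the paper cites Th\'eor\`eme 4.1 there, while you cite Th\'eor\`eme 3.2, but the content invoked is the same), and then specialize to $\mathcal{F} = \underline{\mathbb{F}}_p$ to obtain the mod $p$ point count. Your extra remark that the formula is characteristic-insensitive because the d\'evissage argument never inverts $p$ is a useful clarification of why the citation is legitimate, but it is not a divergence in method.
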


\begin{proof}
This follows from \cite[Rapport, Theorem 4.1]{SGA4.5}. 
\end{proof}

\begin{example}
If $X = \mathbb{P}^1$ and $\mathcal{F} = \mathbb{F}_p$, the left side of Lemma \ref{lem-function-sheaf} is $|\mathbb{P}^1(\mathbb{F}_q)| = q+1$. On the other hand, $R\Gamma(\mathbb{P}^1_{\overline{k}}, \mathbb{F}_p) = \mathbb{F}_p[0]$ so the right side of Lemma \ref{lem-function-sheaf} is $\Tr(\gamma, H^0(\mathbb{P}^1_{\overline{k}}, \mathbb{F}_p)) = 1 \equiv q+1 \pmod{p}$ since $\Gal(\overline{k}/ k)$ acts trivially on  $H^0(\mathbb{P}^1_{\overline{k}}, \mathbb{F}_p)$. If $X= \mathbb{A}^1$ then $R\Gamma_c(\mathbb{A}^1_{\overline{k}}, \mathbb{F}_p) = 0$, which agrees with the fact that $|\mathbb{A}^1(\mathbb{F}_q)| = q \equiv 0 \pmod{p}$.
\end{example}

\subsubsection{Definitions and bases}
The mod $p$ Hecke algebra associated to $K$ is the algebra $\mathcal{H}_{K}$ of compactly supported functions $K \backslash G(F) / K \rightarrow \mathbb{F}_p$. The multiplication is convolution:
\begin{equation} \label{eqn-convolution}
    (f_1 * f_2)(g) = \sum_{h \in G(F)/K} f_1(h) f_2(h^{-1}g), \quad f_1, f_2 \in \mathcal{H}_K, \: g \in G(F).
\end{equation}
The algebra $\mathcal{H}_{K}$ has a basis consisting of the characteristic functions of the double cosets $K \backslash G(F) / K$. By \eqref{eqn-k-points}, these double cosets are indexed by $ w \in W_{\mathbf{f}} \backslash W^\sigma / W_{\mathbf{f}}$. Let $\mathbbm{1}_{w}$ be the characteristic function of the corresponding double coset.

There is another basis of $\mathcal{H}_K$ more suitable for geometric arguments. For $w \in W_{\mathbf{f}} \backslash W^\sigma / W_{\mathbf{f}}$, let $\phi_{w} = \sum_{v} \mathbbm{1}_v$, where the sum runs over those $v \in W_{\mathbf{f}} \backslash W^\sigma / W_{\mathbf{f}}$ such that $_{\mathbf{f}} v ^\mathbf{f} \leq {}_{\mathbf{f}} w ^\mathbf{f}$. The basis $\phi_{w}$ arises from the function-sheaf dictionary as follows. Let $(\underline{\mathbb{F}}_p)_w$ be the constant \'etale sheaf supported on $\Fl_w$. Then by the closure relations in Lemma \ref{lem-closure}, we have
\begin{equation} \label{eqn-basis-Tr} ((\underline{\mathbb{F}}_p)_w)^{\Tr} = \phi_w.
\end{equation}

\begin{remark}
In the case of split groups in equal characteristic, $(\underline{\mathbb{F}}_p)_w$ is a shift of the mod $p$ intersection cohomology sheaf of $\Fl_w$ (by the perfected version of \cite[Theorem 1.5]{Cass:Perverse}). By \cite[Theorem 1.7]{Cass:Perverse}, the same is true in mixed characteristic if $\Fl_w$ has a deperfection by a scheme with $F$-rational singularities. It is an interesting question whether such a deperfection exists for all $w$ (for example, one could ask whether or not $\Fl_{w}^{\dep}$ is $F$-rational).
\end{remark}

\subsubsection{The Satake transform}
Recall the Levi decomposition $P = MU$ arising from the cocharacter $\lambda$. The Satake transform with respect to $P$ is the following map.
\begin{equation} \label{eqn-Satake.transform}
    \mathcal{S} \colon \mathcal{H}_{\mathcal{G}} \rightarrow \mathcal{H}_{M(F) \cap K}, \quad \mathcal{S}(f)(m) = \sum_{u \in U(F) / U(F) \cap K} f(mu), \: \: m \in M(F).
\end{equation}

\subsection{Explicit formulas}
Let $w_1, w_2 \in W_{\mathbf{f}} \backslash W^\sigma / W_{\mathbf{f}}$. The image of the convolution map $m \colon \Fl_{w_1} \times^{L^+\mathcal{I}} \Fl_{w_2} \rightarrow \Fl_{\mathcal{G}}$ is geometrically irreducible, closed, $L^+\mathcal{G}$-stable, and it is defined over $k$. Thus, it is of the form $\Fl_w$ for some $w \in  W_{\mathbf{f}} \backslash W^\sigma / W_{\mathbf{f}}$. Moreover, by Theorem \ref{thm-convolution}, $\Fl_{w, \overline{k}}$ is the image of a certain Demazure map, and $w$ may be computed explicitly by tracing through the steps in the proof of Lemma \ref{lem-Demazure.rational}.

\begin{theorem} \label{thm-conv-formula}
Let $w_1, w_2 \in W_{\mathbf{f}} \backslash W^\sigma / W_{\mathbf{f}}$, and let $w \in  W_{\mathbf{f}} \backslash W^{\sigma} / W_{\mathbf{f}}$ be the element such that $m(\Fl_{w_1} \times^{L^+\mathcal{G}} \Fl_{w_2}) = \Fl_w$.
Then
$$\phi_{w_1} * \phi_{w_2} = \phi_w.$$
\end{theorem}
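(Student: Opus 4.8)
The plan is to verify the identity $\phi_{w_1}*\phi_{w_2}=\phi_w$ pointwise on $G(F)/K = \Fl_{\mathcal{G}}(k)$ by a point count modulo $p$, with all the geometric content supplied by Theorem \ref{thm-convolution}. First I would note that, since the stalks of a constant sheaf carry the trivial Galois action, \eqref{eqn-basis-Tr} identifies $\phi_{w_i}$ with the characteristic function of $\Fl_{w_i}(k)\subset \Fl_{\mathcal{G}}(k)$, and likewise $\phi_w$ with the characteristic function of $\Fl_w(k)$. Fix $g\in G(F)/K$. Unwinding \eqref{eqn-convolution} and combining Lemma \ref{lem-conv-points} with the isomorphism $LG\times^{L^+\mathcal{G}}\Fl_{\mathcal{G}}\xrightarrow{\sim}\Fl_{\mathcal{G}}\times\Fl_{\mathcal{G}}$, $(g_1,g_2)\mapsto(g_1,g_1g_2)$, one finds
$$(\phi_{w_1}*\phi_{w_2})(g)\;=\;\#\{\,h\in\Fl_{w_1}(k)\;:\;h^{-1}g\in\Fl_{w_2}(k)\,\}\;=\;\#\big(m^{-1}(g)\big)(k)\quad\text{in }\mathbb{F}_p,$$
where $m^{-1}(g)$ denotes the fibre over $g$ of the convolution map $m\colon\Fl_{w_1}\times^{L^+\mathcal{G}}\Fl_{w_2}\to\Fl_w$. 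So it suffices to show that this fibre has exactly one $k$-point modulo $p$ when $g\in\Fl_w(k)$, and is empty when $g\notin\Fl_w(k)$.

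For $g\notin\Fl_w(k)$: since $\Fl_w$ is the image of $m$ and is defined over $k$, we have $\Fl_w(\overline{k})^{\Frob}=\Fl_w(k)$, so a $k$-point $g$ outside $\Fl_w(k)$ lies outside $\Fl_w(\overline{k})$ as well; hence $m^{-1}(g)=\varnothing$ and both sides vanish. For $g\in\Fl_w(k)$: by Theorem \ref{thm-convolution}(2) (which holds over the finite field $k$ of Section \ref{sect-Hecke-alegbra} since all the schemes and maps are defined over $k$, the statement there being phrased over $\overline{k}$ only by the running convention), $Rm_{!}(\mathbb{F}_p)\cong\mathbb{F}_p[0]$ is the constant sheaf on $\Fl_w$. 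By proper base change its stalk at $g$ is $R\Gamma_c(m^{-1}(g)_{\overline{k}},\mathbb{F}_p)\cong\mathbb{F}_p[0]$ with trivial geometric Frobenius action, so Lemma \ref{lem-function-sheaf} applied to the fibre $m^{-1}(g)$ (or to a deperfection of it, étale cohomology being insensitive to perfection) gives $\#\big(m^{-1}(g)\big)(k)\equiv\Tr(\gamma,H^0_c)=1\pmod p$, which equals $\phi_w(g)$. This finishes the pointwise comparison, hence the theorem.

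All the real content is packaged into Theorem \ref{thm-convolution}, which I would not reprove here: the identification of the image of $m$ with a translated Demazure image and the computation $Rm_{!}(\mathbb{F}_p)\cong\mathbb{F}_p[0]$ rest on Lemma \ref{lem-Demazure.rational} and the factorisation of $\pi_{\dot{w}}$ through iterated $(\mathbb{P}^1)_{\perf}$-bundles — ultimately on the fact that the fibres of $m$ are built from Schubert varieties whose compactly supported mod $p$ cohomology is $\mathbb{F}_p[0]$, equivalently whose $\mathbb{F}_q$-point count is $\equiv 1\pmod p$. Granting that, the only delicate points in the present deduction are bookkeeping: matching the combinatorially defined function $\phi_w$ (in terms of the Bruhat order on $W_{\mathbf{f}}\backslash W^{\sigma}/W_{\mathbf{f}}$) with the characteristic function of $\Fl_w(k)$, which is exactly \eqref{eqn-basis-Tr} together with Lemma \ref{lem-closure}, and descending Theorem \ref{thm-convolution} from $\overline{k}$ to $k$, which is harmless because every object in sight is already $k$-rational. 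I therefore expect the argument above to be essentially the full proof, with Theorem \ref{thm-convolution} doing the heavy lifting.
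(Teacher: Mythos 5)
Your proof follows the same route as the paper: identify $\phi_{w_i}$ with the characteristic function of $\Fl_{w_i}(k)$ via \eqref{eqn-basis-Tr}, rewrite $(\phi_{w_1}*\phi_{w_2})(g)$ as a point count of $m^{-1}(g)$ using Lemma \ref{lem-conv-points}, and then apply the function-sheaf dictionary together with $Rm_!(\mathbb{F}_p)\cong\mathbb{F}_p[0]$ from Theorem \ref{thm-convolution} and proper base change. Your remark about descending Theorem \ref{thm-convolution} from $\overline{k}$ to $k$ is a point the paper leaves implicit; the cleanest justification is that since $m$ is (the perfection of) a proper map, $Rm_!=Rm_*$, and the adjunction unit $\mathbb{F}_p\to Rm_*\mathbb{F}_p$ is a map of sheaves on $\Fl_w$ over $k$ which becomes an isomorphism after base change to $\overline{k}$, hence is already an isomorphism over $k$; alternatively, one only needs that the geometric Frobenius acts trivially on $H^0_c(m^{-1}(x)_{\overline{k}},\mathbb{F}_p)\cong\mathbb{F}_p$, which holds because the fibre is proper and geometrically connected. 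Either way your deduction is correct and matches the paper's proof.
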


\begin{proof}
By Lemma \ref{lem-conv-points}, for each $x \in \Fl_{\mathcal{G}}(k)$ we have
$$m^{-1}(x) = \{(y, y^{-1}x) \: : \: y \in \Fl_{w_1}(k), \: y^{-1}x \in \Fl_{w_2}(k)\}.$$ Recall that $((\underline{\mathbb{F}}_p)_{w_i})^{\Tr} = \phi_{w_i}$ by \eqref{eqn-basis-Tr}. Thus, for any $\tilde{x} \in G(F)$ a lift of $x$,
$$(\phi_{w_1} * \phi_{w_2})(\tilde{x}) = |m^{-1}(x)| \pmod{p}.$$
On the other hand, $Rm_!(\mathbb{F}_p) \cong (\underline{\mathbb{F}}_p)_w$ by Theorem \ref{thm-convolution}. Note that $m^{-1}(x)$ is nonempty if and only if $x \in \Fl_w(k)$, in which case it follows that $R\Gamma_c(m^{-1}(x), \mathbb{F}_p) = \mathbb{F}_p[0]$ where we view $m^{-1}(x)$ as a $k$-scheme. 
Now we conclude by applying the function-sheaf dictionary, using that $\sum_i (-1)^i \Tr(\gamma, H_c^i(m^{-1}(x)_{\overline{k}}, \mathbb{F}_p)) = \Tr(\gamma, H_c^0(m^{-1}(x)_{\overline{k}}, \mathbb{F}_p)) = 1$. Here we use the topological invariance of the \'etale site \cite[Tag 04DY]{stacks-project} to conclude that Lemma \ref{lem-function-sheaf} is also valid for the perfection of a finite-type $k$-scheme.
\end{proof}

For $c \in \pi_1(M)_I^\sigma$, let
$\phi_{c,w} \in \mathcal{H}_{M(F) \cap K}$ be the characteristic function of the intersection $(\Fl_{\mathcal{M}})^c(\mathbb{F}_q) \cap \Fl_w(\mathbb{F}_q)$. Recall that $M(F) \cap K = \mathcal{M}(\mathcal{O}_F)$ by Lemma \ref{lem-parahoric-levi}.

\begin{theorem} \label{thm-satake-formula}
The Satake transform $\mathcal{S} \colon \mathcal{H}_{\mathcal{G}} \rightarrow \mathcal{H}_{M(F) \cap K}$ satisfies
$$
\mathcal{S}(\phi_w)=
\begin{cases}
\phi_{c,w}, & (\Fl_{\mathcal{M}, \overline{k}})^c \cap \Fl_{w, \overline{k}} \text{ is the unique closed attractor in } (\Fl_{w, \overline{k}})^+ \\
0, & \text{the closed attractor in } (\Fl_{w, \overline{k}})^+ \text{does not admit a $k$-point in } \Fl_{\mathcal{M}}.
\end{cases}$$
\end{theorem}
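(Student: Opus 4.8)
The plan is to evaluate $\mathcal{S}(\phi_w)$ pointwise on $M(F)$ and to reduce each value modulo $p$ to the point-counting in Theorem \ref{thm-satake-k-points}. First I would record that, under the identification $\Fl_{\mathcal{G}}(k)=G(F)/K$ of Lemma \ref{lem-k-points}, the function $\phi_w=((\underline{\mathbb{F}}_p)_w)^{\Tr}$ of \eqref{eqn-basis-Tr} is the characteristic function of $\Fl_w(k)$: the stalk of $(\underline{\mathbb{F}}_p)_w$ at a $k$-point $x$ is $\mathbb{F}_p$ with trivial Galois action when $x\in\Fl_w(k)$ and is $0$ otherwise. Hence, for $m\in M(F)$,
\[
\mathcal{S}(\phi_w)(m)=\sum_{u\in U(F)/(U(F)\cap K)}\phi_w(mu)\equiv |\{u\in U(F)/(U(F)\cap K):[mu]\in\Fl_w(k)\}|\pmod{p},
\]
a finite sum, since $\Fl_w$ is of finite type over the finite field $k$.

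Next I would recognize this set as the $k$-points of an attractor fibre. Let $c'\in\pi_1(M)^\sigma_I$ be the unique index with $[m]\in(\Fl_{\mathcal{M}})^{c'}(k)$. If $[m]\notin\Fl_w(k)$ the set is empty: any $[mu]\in\Fl_w(k)$ would lie in $S_{c'}\cap\Fl_w$ with $q^+$-image $[m]$, forcing $[m]\in\Fl_w$ because $\Fl_w$ is closed and $(\mathbb{G}_m)_{\perf}$-stable. If $[m]\in\Fl_w(k)$, then Theorem \ref{thm-satake-k-points}(1) identifies the set with the set of $k$-points of $\pi_{c',w}^{-1}(m)$, the fibre of $\pi_{c',w}\colon S_{c'}\cap\Fl_w\to(\Fl_{\mathcal{M}})^{c'}\cap\Fl_w$ over $m$. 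So in all cases $\mathcal{S}(\phi_w)(m)\equiv|\pi_{c',w}^{-1}(m)(k)|\pmod{p}$, with the convention that the right-hand side is $0$ when $[m]\notin\Fl_w(k)$.

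Then I would apply proper base change and the function-sheaf dictionary: the stalk of $R(\pi_{c',w})_!(\mathbb{F}_p)$ at $m$ computes $R\Gamma_c(\pi_{c',w}^{-1}(m)_{\overline k},\mathbb{F}_p)$, and Lemma \ref{lem-function-sheaf}, which is valid for perfections of finite-type $k$-schemes by topological invariance of the \'etale site \cite[Tag 04DY]{stacks-project}, gives $|\pi_{c',w}^{-1}(m)(k)|\equiv\sum_i(-1)^i\Tr(\gamma,H^i_c(\pi_{c',w}^{-1}(m)_{\overline k},\mathbb{F}_p))\pmod{p}$. By Theorem \ref{thm-satake-k-points}(2), $R(\pi_{c',w})_!(\mathbb{F}_p)$ equals the constant sheaf $\mathbb{F}_p$ in degree $0$ when $(\Fl_{\mathcal{M},\overline k})^{c'}\cap\Fl_{w,\overline k}$ is the unique closed attractor in $(\Fl_{w,\overline k})^+$, and is $0$ otherwise; taking the trace of $\gamma$ on the stalk at $m$ then gives $|\pi_{c',w}^{-1}(m)(k)|\equiv 1$, respectively $\equiv 0$, modulo $p$.

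Finally I would assemble the two mutually exclusive and exhaustive cases of the statement. The $q^+$-image of the unique closed attractor of $(\Fl_{w,\overline k})^+$ is a single connected component of $(\Fl_{w,\overline k})^0$, and each $(\Fl_{\mathcal{M},\overline k})^{c}\cap\Fl_{w,\overline k}$ is connected by Lemma \ref{lem-geom-conn}; hence, if this image lies in $\Fl_{\mathcal{M},\overline k}$ and carries a $k$-point, it coincides with $(\Fl_{\mathcal{M},\overline k})^c\cap\Fl_{w,\overline k}$ for the corresponding $c\in\pi_1(M)^\sigma_I$. In that case the previous step gives $\mathcal{S}(\phi_w)(m)\equiv 1$ exactly when $c'=c$ and $[m]\in\Fl_w(k)$, i.e.\ exactly when $[m]\in(\Fl_{\mathcal{M}})^c(k)\cap\Fl_w(k)$, so $\mathcal{S}(\phi_w)=\phi_{c,w}$. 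Otherwise, no intersection $(\Fl_{\mathcal{M},\overline k})^{c'}\cap\Fl_{w,\overline k}$ with $c'\in\pi_1(M)^\sigma_I$ can be the closed attractor, since a nonempty such intersection always contains the minimal Iwahori-fixed point of $(\Fl_{\mathcal{M}})^{c'}$ (as in the proof of Lemma \ref{lem-geom-conn}), which is a $k$-point lying in $\Fl_{\mathcal{M}}$; hence $\mathcal{S}(\phi_w)(m)\equiv 0$ for every $m$, i.e.\ $\mathcal{S}(\phi_w)=0$. I expect the main obstacle to be exactly this last bit of bookkeeping --- matching ``the unique closed attractor in $(\Fl_{w,\overline k})^+$'' against the intersections $(\Fl_{\mathcal{M},\overline k})^c\cap\Fl_{w,\overline k}$ and confirming the dichotomy is exhaustive --- since the substantive cohomological vanishing is already packaged into Theorem \ref{thm-satake-k-points}.
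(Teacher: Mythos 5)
Your proof is correct and is essentially the argument the paper has in mind: the authors' proof of Theorem \ref{thm-satake-formula} is literally ``analogous to that of Theorem \ref{thm-conv-formula},'' and your write-up instantiates that analogy in detail --- identifying $\mathcal{S}(\phi_w)(m)$ with the fibre count $|\pi_{c',w}^{-1}(m)(k)|$ via Theorem \ref{thm-satake-k-points}(1), invoking the $(\mathbb{G}_m)_{\perf}$-stability of $\Fl_w$ to dispose of the case $[m]\notin\Fl_w(k)$, and then applying the function-sheaf dictionary and Theorem \ref{thm-satake-k-points}(2), with the usual caveat about topological invariance of the \'etale site for perfections. The one place you go beyond the terse ``analogous'' is the final paragraph checking that the two alternatives in the statement are exhaustive and mutually exclusive; that step is correct, but it rests on an observation worth making explicit: the length-zero element of $W_M$ in the coset $c'$ is fixed by $\sigma$ when $c'\in\pi_1(M)_I^\sigma$ (since $\sigma$ preserves length and permutes $L^+\mathcal{M}$-orbits), and by Lemma \ref{lem-k-points} this $\sigma$-fixed $\overline{k}$-point is a genuine $k$-point. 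With that spelled out, your dichotomy argument --- any nonempty $(\Fl_{\mathcal{M},\overline{k}})^{c'}\cap\Fl_{w,\overline{k}}$ contains this $k$-point, and conversely a $k$-point of the closed attractor lying in $\Fl_{\mathcal{M}}$ pins down a unique $c\in\pi_1(M)_I^\sigma$ by Lemma \ref{lem-geom-conn} --- closes the loop cleanly.
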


\begin{proof}
Thanks to Theorem \ref{thm-satake-k-points}, the proof is analogous to that of Theorem \ref{thm-conv-formula}.
\end{proof}

\addtocontents{toc}{\protect\setcounter{tocdepth}{0}}
\subsection{The case of a special parahoric}
Throughout this section we assume that $M$ is a minimal Levi subgroup and that $\mathbf{f}$ is a special vertex. Since we assumed that $M$ is semi-standard, this implies that $M = C_G(A)$. By \cite[\S 6]{HenniartVigneras:Satake}, the group $\Lambda := M(F)/(M(F) \cap K)$ is a finitely generated abelian group. The Hecke algebra $\mathcal{H}_{M(F) \cap K}$ is naturally identified with the group algebra $\mathbb{F}_p[\Lambda]$. We denote the canonical basis elements by $e^{z}$ for $z \in \Lambda$.

The relative Weyl group $W(G,A)$ acts on $\Lambda$. Furthermore, the map
$$\Lambda \rightarrow K \backslash G(F) /K, \quad z \mapsto K z K$$
induces a bijection between $W(G,A)$-orbits in $\Lambda$ and the double cosets $K \backslash G(F) / K$. 
To describe these orbits, we note that there is a natural homomorphism of groups $\nu_M \colon M(F) \rightarrow \Hom(X^*(M)^{\Gal(\overline{F}/F)}, \mathbb{Z})$ characterized by the requirement
$$\nu_M(m)(\chi) = \text{val}_F(\chi(m)), \quad \text{for all }m \in M(F), \: \chi \in X^*(M)^{\Gal(\overline{F}/F)}.$$ Here $\text{val}_F$ is the normalized valuation on $F$. The restriction map $X^*(M)^{\Gal(\overline{F}/F)} \rightarrow X^*(A)$ induces an isomorphism $X_*(A) \otimes \mathbb{Q} \cong  \Hom(X^*(M)^{\Gal(\overline{F}/F)}, \mathbb{Z}) \otimes \mathbb{Q}$, and hence we may view $\nu_{\mathcal{M}}$ as a homomorphism
$$\nu_{M} \colon M(F) \rightarrow X_*(A) \otimes \mathbb{Q}.$$ The map $\nu_{M}$ is $W(G,A)$-equivariant and identifies $\Lambda/\Lambda_{\text{tor}}$ with a lattice in $X_*(A) \otimes \mathbb{Q}$. 

The minimal parabolic $P$ containing $M$ corresponds to a choice of positive roots $\Phi^+$ for $G$ in $X^*(A)$. This determines an anti-dominant Weyl chamber $\mathcal{C} \subset X_*(A) \otimes \mathbb{Q}$, defined by the requirement that $\langle \alpha, \nu \rangle \leq 0$ for all $\alpha \in \Phi^+$ and $\nu \in \mathcal{C}$. Let $\Lambda_-$ be the preimage of $\mathcal{C}$ in $\Lambda$. We call the elements of $\Lambda_-$ \emph{anti-dominant}. Each $W(G,A)$-orbit in $\Lambda$ contains a unique anti-dominant element, so we get bijections
$$
\Lambda_- \xlongrightarrow{\sim}  K \backslash G(F) / K \xlongleftarrow{\sim} W_{\mathbf{f}} \backslash W^\sigma / W_{\mathbf{f}}.
$$
For $z \in \Lambda_-$, let $\Fl_z \subset \Fl_{\mathcal{G}}$ be the corresponding perfect Schubert scheme, and let $\phi_z = ((\underline{\mathbb{F}}_p)_z)^{\Tr}$ be corresponding basis element of $H_K$. 

The following theorem recovers \cite[Theorem 1.2]{Herzig:Satake} (where $K$ is hyperspecial), and \cite[\S 1.5]{HenniartVigneras:Satake} (where $K$ is special). Moreover, we recover the explicit formulas in \cite[Proposition 5.1]{Herzig:Classification} and \cite[Theorem 5.5]{Ollivier:Inverse} (where $G$ is split and $K$ is hyperspecial), and in \cite[Theorem 1.1]{InverseSatake} (where $G$ is arbitrary and $K$ is special).

\begin{theorem} \label{thm-special-satake}
(1) If $K$ is special, for $z_1, z_2 \in \Lambda_-$, we have $$m(\Fl_{z_1} \times^{L^+\mathcal{G}} \Fl_{z_2}) = \Fl_{z_1z_2} \quad \text{and} \quad \phi_{z_1} * \phi_{z_2} = \phi_{z_1z_2}.$$
In particular, $\mathcal{H}_{K}$ is commutative. \\
(2) Moreover, the Satake transform $\mathcal{S} \colon  \mathcal{H}_{K} \rightarrow \mathcal{H}_{M(F) \cap K}$ is injective, identifies $\mathcal{H}_{K}$ with $\mathbb{F}_p[\Lambda_-]$, and satisfies
$$\mathcal{S}(\phi_z) = e^{z}, \quad \text{for all } z \in \Lambda_-.$$
\end{theorem}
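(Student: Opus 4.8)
The plan is to deduce both parts from Theorem \ref{thm-conv-formula} and Theorem \ref{thm-satake-formula}, translating the geometry into combinatorics of $\Lambda_-$ via the identification $\Lambda = \pi_1(M)^\sigma_I$ of \cite[Proposition 1.0.2]{HainesRostami:Satake}, under which the bijections $\Lambda_- \cong K\backslash G(F)/K \cong W_{\mathbf f}\backslash W^\sigma/W_{\mathbf f}$ are explicit. I would first prove the pointwise formula $\mathcal S(\phi_z)=e^z$ for every $z\in\Lambda_-$; since this uses only Theorem \ref{thm-satake-formula}, there is no circularity with part (1). By that theorem it suffices to check that $\phi_{z,z}=e^z$ and that the generalized Mirkovi\'c--Vilonen cycle $S_z\cap\Fl_{z,\overline k}$ is the unique closed attractor in $(\Fl_{z,\overline k})^+$. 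The first is immediate: because $M=C_G(A)$ is minimal over $F$, the set $\Fl_{\mathcal M}(\mathbb F_q)$ is identified with $\Lambda$ and the component indexed by $c$ is the single $\mathbb F_q$-point $[c]$, so $(\Fl_{\mathcal M})^z(\mathbb F_q)\cap\Fl_z(\mathbb F_q)=\{[z]\}$ and $\phi_{z,z}$ is its characteristic function $e^z$.

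For the second point the essential input is \eqref{eqn-special-intersection}, $KzK\cap zU(F)=z(U(F)\cap K)$, used over $\breve F$. Since $z$ factors through $A\subseteq Z(M)$, the element $z(\varpi)$ is central in $M(\breve F)$, so the $L^+\mathcal M$-orbit through $[z]$ inside the open cell $\Fl_z^\circ$ is the single point $[z]$; combining this with the fibre description in Theorem \ref{thm-satake-k-points}(1) and \eqref{eqn-special-intersection} gives $S_z\cap\Fl_z^\circ=\{[z]\}$ over $\overline k$. The standard fact that the Cartan coweight of an element of $U(\breve F)\cdot z$ dominates its Iwasawa coweight (which is $z$) then shows that $S_z$ meets no lower Schubert cell of $\Fl_z$, so $S_z\cap\Fl_{z,\overline k}=\{[z]\}$ as a reduced scheme. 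A single point is closed in $\Fl_z$, so by Theorem \ref{prop-equiv-res} and Lemma \ref{lem-attractor.coh}(1) it is the unique closed attractor, and Theorem \ref{thm-satake-formula} yields $\mathcal S(\phi_z)=\phi_{z,z}=e^z$. That $\mathcal S$ is injective with image $\mathbb F_p[\Lambda_-]$ is then formal: $\{\phi_z\}_{z\in\Lambda_-}$ is a basis of $\mathcal H_K$ and $\{e^z\}_{z\in\Lambda_-}$ is a linearly independent family in $\mathbb F_p[\Lambda]$ spanning the subspace $\mathbb F_p[\Lambda_-]$.

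For part (1), Theorem \ref{thm-conv-formula} gives $m(\Fl_{z_1}\times^{L^+\mathcal G}\Fl_{z_2})=\Fl_w$ for a unique $w\leftrightarrow z\in\Lambda_-$ and $\phi_{z_1}*\phi_{z_2}=\phi_w$; I must identify $w$ with $z_1z_2$. As the anti-dominant cone is a submonoid, $z_1z_2\in\Lambda_-$, and since $z_1z_2\in Kz_1K\cdot Kz_2K$ the point $[z_1z_2]$ lies in $\Fl_w(\mathbb F_q)$; because $z_1z_2$ is anti-dominant this forces $\Fl_{z_1z_2}\subseteq\Fl_w$. Conversely $\Fl_{z_1}\times^{L^+\mathcal G}\Fl_{z_2}$ has dimension $\dim\Fl_{z_1}+\dim\Fl_{z_2}$, and for a special vertex $\dim\Fl_z=\langle 2\rho,\bar z\rangle$ with $\bar z$ the dominant conjugate (Lemma \ref{lem-closure}), which is additive in $z$ on $\Lambda_-$; since $m$ is proper and surjective onto $\Fl_w$ this gives $\dim\Fl_w\le\dim\Fl_{z_1z_2}$, and together with $\Fl_{z_1z_2}\subseteq\Fl_w$ and irreducibility we conclude $\Fl_w=\Fl_{z_1z_2}$, i.e.\ $w=z_1z_2$. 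Hence $\phi_{z_1}*\phi_{z_2}=\phi_{z_1z_2}$, and commutativity of $\mathcal H_K$ follows since $\Lambda$ is abelian and $\{\phi_z\}$ is a basis. Finally $\mathbb F_p[\Lambda_-]$ is a subring of $\mathbb F_p[\Lambda]$, and $\mathcal S(\phi_{z_1}*\phi_{z_2})=e^{z_1z_2}=e^{z_1}e^{z_2}=\mathcal S(\phi_{z_1})\mathcal S(\phi_{z_2})$, so $\mathcal S$ is multiplicative and hence the asserted algebra isomorphism.

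I expect the technical heart to be the single-point claim $S_z\cap\Fl_{z,\overline k}=\{[z]\}$: one must take care that \eqref{eqn-special-intersection} and the Cartan--Iwasawa comparison are applied over $\breve F$, and that although $M=C_G(A)$ is only minimal over $F$ (so $\Fl_{\mathcal M,\overline k}$ may be positive-dimensional), the relevant intersection nonetheless collapses to a point because $z$ is $M$-central. Everything else — the bijections, the dimension formula, and the bookkeeping with $\mathbb F_p[\Lambda_-]$ — is routine given the geometric theorems already established.
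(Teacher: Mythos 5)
Your overall architecture is reasonable, but there are two genuine gaps, both stemming from applying arguments valid only for very special parahorics (equivalently, $G$ quasi-split over $F$) in the general special case.

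The first and most serious gap is your claim that $S_z\cap\Fl_{z,\overline k}=\{[z]\}$ as a reduced scheme over $\overline k$. This is false when $\mathbf f$ is special but not very special. Take $G=D^\times$ for $D$ the quaternion algebra over $\mathbb Q_p$, so that $M=G$, $U=1$, the $(\mathbb G_m)_{\perf}$-action is trivial, and $S_z\cap\Fl_{z,\overline k}=\Fl_{z,\overline k}$ is positive-dimensional. More generally, for any inner form, the fixed-point component $(\Fl_{\mathcal M,\overline k})^c\cap\Fl_{z,\overline k}$ (to which the closed attractor is universally homeomorphic by Lemma~\ref{lem-attractor.coh}(1)) can be a positive-dimensional Schubert scheme in $\Fl_{\mathcal M,\overline k}$. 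You anticipate this concern in your last paragraph, but the claim that "the relevant intersection nonetheless collapses to a point because $z$ is $M$-central" does not rescue it: centrality of $z$ forces only the $L^+\mathcal M$-orbit through $[z]$ to be a point, not the whole intersection $(\Fl_{\mathcal M,\overline k})^c\cap\Fl_{z,\overline k}$, which is a union of many $L^+\mathcal M_{\overline k}$-orbits. The paper sidesteps this entirely: it only needs $\pi_{c,z}^{-1}(z)(\mathbb F_q)=\{z\}$ over $\mathbb F_q$, which follows from \eqref{eqn-special-intersection} applied over $F$ (not over $\breve F$, where it need not hold) together with the fact that $\pi_{c,z}^{-1}$ of the open-cell part stays in the open cell (from $(\mathbb G_m)_{\perf}$-stability of Schubert schemes and Lemma~\ref{lem-closure}). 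This single-$\mathbb F_q$-point count combined with the function-sheaf dictionary (Lemma~\ref{lem-function-sheaf}) gives $R(\pi_{c,z})_!(\mathbb F_p)\neq 0$, and the \emph{dichotomy} in Theorem~\ref{thm-satake-k-points}(2) then forces $(\Fl_{\mathcal M,\overline k})^c\cap\Fl_{z,\overline k}$ to be the unique closed attractor — there is no need to know its dimension. Finally, the identification $\phi_{c,z}=e^z$ comes not from any scheme-theoretic collapse but from $\Lambda\cong\pi_1(M)_I^\sigma$ (Haines--Rostami), which shows each $\sigma$-stable component of $\Fl_{\mathcal M,\overline k}$ has exactly one $\mathbb F_q$-point.

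The second gap is your derivation of $w=z_1z_2$ in part (1) via a dimension count, which relies on the identity $\dim\Fl_z=\langle 2\rho,\bar z\rangle$ and its additivity on $\Lambda_-$. This is standard when $\mathbf f$ is very special (the paper explicitly confines the remark about additivity of $\ell$ on anti-dominant elements to the very special case), but Lemma~\ref{lem-closure} only gives $\dim\Fl_z=\ell({}_{\mathbf f}z^{\mathbf f})$, and you would need to prove the translation to $\langle 2\rho,\bar z\rangle$ and its additivity for a general special $\mathbf f$ over $F$. The paper's route is both shorter and avoids the issue: once $\mathcal S(\phi_z)=e^z$ is established, and using the known fact that $\mathcal S$ is an algebra homomorphism for special $K$ (Herzig, Henniart--Vign\'eras), one has $\mathcal S(\phi_{z_1}*\phi_{z_2})=e^{z_1}e^{z_2}=e^{z_1z_2}=\mathcal S(\phi_{z_1z_2})$, and since $\mathcal S$ is injective on the basis $\{\phi_z\}$ and Theorem~\ref{thm-conv-formula} gives $\phi_{z_1}*\phi_{z_2}=\phi_w$, one concludes $\phi_w=\phi_{z_1z_2}$, hence $w=z_1z_2$, and the geometric statement $m(\Fl_{z_1}\times^{L^+\mathcal G}\Fl_{z_2})=\Fl_{z_1z_2}$ follows. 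This is logically the reverse of the flow you propose: part (1) is deduced \emph{from} the Satake formula plus injectivity, not established independently by a dimension count.
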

\begin{proof}
By Theorem \ref{thm-conv-formula} and the fact that $\mathcal{S}$ is an algebra homomorphism, all of the statements follow from the formula $\mathcal{S}(\phi_z) = e^{z}$. To prove the formula, let $(\Fl_{\mathcal{M}})^c$ be the unique connected component $\Fl_{\mathcal{M}}$ which contains the $k$-point $z \in \Lambda_-$, and let $S_c \subset \Fl_{\mathcal{G}}$ be the corresponding attractor. Then we have the map $\pi_{c,z} \colon S_c \cap \Fl_{z} \rightarrow (\Fl_{\mathcal{M}})^c \cap \Fl_z$. Since affine Schubert schemes are $(\mathbb{G}_m)_{\perf}$-stable, the closure relations (Lemma \ref{lem-closure}) imply that $\pi_{c,z}^{-1}((\Fl_{\mathcal{M}})^c \cap \Fl_z^\circ) \subset \Fl_z^\circ$.
By \eqref{eqn-special-intersection} and Theorem \ref{thm-satake-k-points}, at the level of $k$-points,  $\pi_{c,z}^{-1}(\{z\}) = \{z\}.$
Then by the function-sheaf dictionary, $R(\pi_{c,z})_! (\mathbb{F}_p) \neq 0$. Thus, $(\Fl_{\mathcal{M}, \overline{k}})^c \cap \Fl_{z, \overline{k}}$ is the unique closed attractor in $(\Fl_{z, \overline{k}})^+$. On the other hand, $\Lambda$ is naturally identified with the $\sigma$-stable connected components of $\Fl_{\mathcal{M},\overline{k}}$ by \cite[Proposition 1.0.2]{HainesRostami:Satake}. Thus $z$ is the only $k$-point of $(\Fl_{\mathcal{M}})^c$, so the formula follows from Theorem \ref{thm-satake-formula}.
\end{proof}

\begin{remark}
A special vertex $\mathbf{f}$ is called \emph{very special} if it remains special in $\mathscr{B}(G,\breve{F})$. If $\mathbf{f}$ is very special, Theorem \ref{thm-special-satake} takes the following simpler form. First, by \cite[Lemma 6.1]{Zhu:Ramified}, a special vertex exists if and only if $G$ is quasi-split over $F$. In this case, $M=T$ is a maximal torus and $P = B$ is an $F$-rational Borel. Furthermore, $\Lambda = X_*(T)_I^\sigma$ by \cite[Corollary 11.1.2]{HainesRostami:Satake}. Over $\breve{F}$, we have a splitting $W = X_*(T)_I \rtimes W_0$ where $W_0 = N_G(S)(\breve{F})/T(\breve{F})$. Then ${W}_{\breve{\mathbf{f}}} = W_0$ and
${W}_{\breve{\mathbf{f}}} \backslash W / W_{\breve{\mathbf{f}}}$ and consists of the image of the anti-dominant elements $X_*(T)_-$, defined with respect to $B$, under the surjection $X_*(T) \rightarrow X_*(T)_I$ \cite[Corollary 1.8]{Richarz:Schubert}. Furthermore, the length $\ell$ is additive on these anti-dominant elements, so all convolution maps for $\Fl_{\mathcal{G}}$ are birational.
\end{remark}
\addtocontents{toc}{\protect\setcounter{tocdepth}{2}}
\bibliographystyle{amsalpha}
\bibliography{bibfile}

\end{document}